\newtheorem{thm}{Theorem}[section]
\newtheorem{lem}[thm]{Lemma}
\newtheorem{prp}[thm]{Proposition}
\newtheorem{cor}[thm]{Corollary}
\newtheorem{dfn}[thm]{Definition}
\newtheorem{que}{Question}
\newtheorem{baseexample}[thm]{Example} 
\newtheorem{baseremark}[thm]{Remark} 
\newenvironment{example}
{\begin{baseexample}\rm}{\end{baseexample}}
\newenvironment{remark}
{\begin{baseremark}\rm}{\end{baseremark}}
\newcommand{\rem}[1]{}
\newcommand{\Step}[1]{\noindent {\bf Step #1.}} 
\newcommand{\N}{\mathbb{N}}
\newcommand{\Q}{\mathbb{Q}}
\newcommand{\R}{\mathbb{R}}
\newcommand{\Z}{\mathbb{Z}}
\newcommand{\calCapital}{
\newcommand{\calA}{{\mathcal{A}}}
\newcommand{\calB}{{\mathcal{B}}}
\newcommand{\calC}{{\mathcal{C}}}
\newcommand{\calD}{{\mathcal{D}}}
\newcommand{\calE}{{\mathcal{E}}}
\newcommand{\calF}{{\mathcal{F}}}
\newcommand{\calG}{{\mathcal{G}}}
\newcommand{\calH}{{\mathcal{H}}}
\newcommand{\calI}{{\mathcal{I}}}
\newcommand{\calJ}{{\mathcal{J}}}
\newcommand{\calK}{{\mathcal{K}}}
\newcommand{\calL}{{\mathcal{L}}}
\newcommand{\calM}{{\mathcal{M}}}
\newcommand{\calN}{{\mathcal{N}}}
\newcommand{\calO}{{\mathcal{O}}}
\newcommand{\calP}{{\mathcal{P}}}
\newcommand{\calQ}{{\mathcal{Q}}}
\newcommand{\calR}{{\mathcal{R}}}
\newcommand{\calS}{{\mathcal{S}}}
\newcommand{\calT}{{\mathcal{T}}}
\newcommand{\calU}{{\mathcal{U}}}
\newcommand{\calV}{{\mathcal{V}}}
\newcommand{\calW}{{\mathcal{W}}}
\newcommand{\calX}{{\mathcal{X}}}
\newcommand{\calY}{{\mathcal{Y}}}
\newcommand{\calZ}{{\mathcal{Z}}}
}
\newcommand{\bbCapital}{
\newcommand{\bbA}{{\mathbb{A}}}
\newcommand{\bbB}{{\mathbb{B}}}
\newcommand{\bbC}{{\mathbb{C}}}
\newcommand{\bbD}{{\mathbb{D}}}
\newcommand{\bbE}{{\mathbb{E}}}
\newcommand{\bbF}{{\mathbb{F}}}
\newcommand{\bbG}{{\mathbb{G}}}
\newcommand{\bbH}{{\mathbb{H}}}
\newcommand{\bbI}{{\mathbb{I}}}
\newcommand{\bbJ}{{\mathbb{J}}}
\newcommand{\bbK}{{\mathbb{K}}}
\newcommand{\bbL}{{\mathbb{L}}}
\newcommand{\bbM}{{\mathbb{M}}}
\newcommand{\bbN}{{\mathbb{N}}}
\newcommand{\bbO}{{\mathbb{O}}}
\newcommand{\bbP}{{\mathbb{P}}}
\newcommand{\bbQ}{{\mathbb{Q}}}
\newcommand{\bbR}{{\mathbb{R}}}
\newcommand{\bbS}{{\mathbb{S}}}
\newcommand{\bbT}{{\mathbb{T}}}
\newcommand{\bbU}{{\mathbb{U}}}
\newcommand{\bbV}{{\mathbb{V}}}
\newcommand{\bbW}{{\mathbb{W}}}
\newcommand{\bbX}{{\mathbb{X}}}
\newcommand{\bbY}{{\mathbb{Y}}}
\newcommand{\bbZ}{{\mathbb{Z}}}
}
\newcommand{\catCapital}{
\newcommand{\catA}{{\mathscr{A}}}
\newcommand{\catB}{{\mathscr{B}}}
\newcommand{\catC}{{\mathscr{C}}}
\newcommand{\catD}{{\mathscr{D}}}
\newcommand{\catE}{{\mathscr{E}}}
\newcommand{\catF}{{\mathscr{F}}}
\newcommand{\catG}{{\mathscr{G}}}
\newcommand{\catH}{{\mathscr{H}}}
\newcommand{\catI}{{\mathscr{I}}}
\newcommand{\catJ}{{\mathscr{J}}}
\newcommand{\catK}{{\mathscr{K}}}
\newcommand{\catL}{{\mathscr{L}}}
\newcommand{\catM}{{\mathscr{M}}}
\newcommand{\catN}{{\mathscr{N}}}
\newcommand{\catO}{{\mathscr{O}}}
\newcommand{\catP}{{\mathscr{P}}}
\newcommand{\catQ}{{\mathscr{Q}}}
\newcommand{\catR}{{\mathscr{R}}}
\newcommand{\catS}{{\mathscr{S}}}
\newcommand{\catT}{{\mathscr{T}}}
\newcommand{\catU}{{\mathscr{U}}}
\newcommand{\catV}{{\mathscr{V}}}
\newcommand{\catW}{{\mathscr{W}}}
\newcommand{\catX}{{\mathscr{X}}}
\newcommand{\catY}{{\mathscr{Y}}}
\newcommand{\catZ}{{\mathscr{Z}}}
}
\newcommand{\what}[1]{\widehat{#1}}
\newcommand{\veps}{\varepsilon}
\newcommand{\vphi}{\varphi}
\newcommand{\Lmb}{\Lambda}
\newcommand{\idealof}{\unlhd} 
\newcommand{\derives}{\Longrightarrow}
\newcommand{\nderives}{\centernot\Longrightarrow}
\newcommand{\suchthat}{\,:\,}
\newcommand{\where}{\,|\,}
\newcommand{\quo}[1]{\overline{#1}}
\newcommand{\SMatII}[4]{\left[\begin{array}{cc} {#1} & {#2} \\ {#3} &
{#4} \end{array}\right]}
\newcommand{\Circs}[1]{\left( #1 \right)}
 \DeclareMathOperator{\Aut}{Aut}
\DeclareMathOperator{\Br}{Br} %
\DeclareMathOperator{\Cent}{Cent} %
\DeclareMathOperator{\Char}{char} %
\DeclareMathOperator{\Cor}{Cor} %
\DeclareMathOperator{\End}{End} %
\DeclareMathOperator{\Hom}{Hom} %
\DeclareMathOperator{\id}{id} %
\DeclareMathOperator{\Inn}{Inn} %
\DeclareMathOperator{\Jac}{Jac} %
\newcommand{\op}{\mathrm{op}} %
\DeclareMathOperator{\Pic}{Pic} %
\DeclareMathOperator{\rank}{rank}
\DeclareMathOperator{\Res}{Res} %
\DeclareMathOperator{\Spec}{Spec} %
\newcommand{\nMat}[2]{\mathrm{M}_{#2}(#1)}
\newcommand{\dirlim}{\underrightarrow{\lim}\,}
\newcommand{\units}[1]{{#1^\times}}
\newcommand{\ideal}[1]{\left<#1\right>}
\newcommand{\rMod}[1]{{\mathrm{Mod}\textrm{-}{#1}}}
\newcommand{\rproj}[1]{{\mathrm{proj}}\textrm{-}{#1}}
\renewcommand{\Step}[1]{\medskip {\sc Step {#1}.}}
\newcommand{\lAd}[1]{\mathrm{Ad}^\ell_{#1}}
\newcommand{\rAd}[1]{\mathrm{Ad}^r_{#1}}
\newcommand{\mul}[1]{\odot_{#1}} 
\newcommand{\Moreq}{\sim_\mathrm{Mor}}
\newcommand{\CMoreq}[1]{\sim_{\mathrm{Mor}/{#1}}}
\newcommand{\Breq}{\sim_\mathrm{Br}}
\newcommand{\nBreq}{\nsim_\mathrm{Br}}
\title{Rings That Are Morita Equivalent to Their Opposites}
\author{Uriya A.\ First$^*$}
\date{\today}
\address{$^*$Einstein Institute of Mathematics, Hebrew University of Jerusalem}
\email{uriya.first@gmail.com}
\thanks{This research was supported by an ERC grant \#226135 and by
the Lady Davis Fellowship Trust.}
\keywords{Morita equivalence, anti-automorphism, involution, bilinear form, general bilinear form, Brauer group, Azumaya algebra,
corestriction,
semilocal ring}
\subjclass[2010]{11E39, 16D90, 16H05, 16W10.}
\begin{document}

\maketitle

\begin{abstract}
    We consider the following problem: Under what assumptions are one or
    more of the following  equivalent for a ring $R$: (A) $R$ is Morita equivalent to a ring
    with involution, (B) $R$ is Morita equivalent to a ring with an anti-automorphism,
    (C) $R$ is Morita equivalent to its opposite ring.
    The problem is motivated by a theorem of Saltman which \emph{roughly} states that all conditions are equivalent for
    \emph{Azumaya algebras}.
    Based on the recent \emph{general bilinear forms} of \cite{Fi13A},
    we present a general machinery
    to attack the problem, and use it to show that
    (C)$\iff$(B)
    when $R$ is semilocal or \emph{$\Q$-finite}.
    Further results of similar flavor are also obtained, for example: If $R$ is a semilocal ring such that
    $\nMat{R}{n}$ has an involution, then $\nMat{R}{2}$ has an involution, and under further mild assumptions,
    $R$ itself has an involution.
    In contrast to that, we demonstrate that (B)$\nderives$(A).
    Our methods also give a new perspective on the
    Knus-Parimala-Srinivas proof of Saltman's Theorem.
    Finally, we give a method to test Azumaya algebras of exponent $2$ for the existence of involutions, and use
    it to construct explicit examples of such algebras.
\end{abstract}

\section{Overview}
\label{section:overview}

    Let $R$ be a ring. This paper is concerned with the  question of
    under what assumptions are all or some of the following conditions  equivalent:
    \begin{enumerate}
        \item[(A)] $R$ is Morita equivalent to a ring with involution,
        \item[(B)] $R$ is Morita equivalent to a ring with an anti-automorphism,
        \item[(C)] $R$ is Morita equivalent to $R^\op$ (the opposite ring of $R$).
    \end{enumerate}
    (Actually, we consider a slight refinement that takes into account the \emph{type} of
    the involution/anti-automorphism/Morita equivalence; see section \ref{section:Morita}.)
    Note that obviously (A)$\derives$(B)$\derives$(C), so one is interested in showing
    (B)$\derives$(A) or (C)$\derives$(B).

    The motivation for the question comes from  Azumaya algebras (we recall
    the  definition in section~\ref{section:azumaya}).
    Let $C$ be a commutative ring and let $A$ be an Azumaya $C$-algebra . It was shown
    by Saltman  \cite[Th.\ 3.1]{Sa78} that:
    \begin{enumerate}
        \item[(i)] $A$ is Brauer equivalent to an Azumaya algebra $B$ with an involution of the first kind
        if and only if $A$ is Brauer equivalent to $A^\op$.
        \item[(ii)] If $C/C_0$ is a Galois extension with Galois group $G=\{1,\sigma\}$ ($\sigma\neq 1$),
        then $A$ is Brauer equivalent to an Azumaya algebra $B$ with an involution whose restriction to $C$ is $\sigma$ if and only if
        the corestriction algebra $\Cor_{C/C_0}(A)=(A\otimes A^\sigma)^G$ is split (i.e.\ trivial in the Brauer group of $C_0$).
    \end{enumerate}
    In case $C$ is semilocal and connected, Saltman also showed that
    one can  take $B=A$ in (i) and (ii). (The case where $C$ is a field is an earlier
    result of Albert, e.g.\ see \cite[Ths.\ 10.19 \& 10.22]{Al61StructureOfAlgs}.)
    Two Azumaya algebras are Brauer equivalent if and only if they are Morita equivalent \emph{as $C$-algebras} (\cite[Cor.\ 17.2]{Ba64}),
    so (i) can be understood as: (C)$\derives$(A) for Azumaya algebras, provided the Morita
    equivalence is ``of the first kind''.
    A simpler proof of  Saltman's Theorem was later found by
    Knus, Parimala and Srinivas (\cite[\S4]{KnParSri90}).

    In this paper, we use \emph{general bilinear forms}, introduced in \cite{Fi13A},
    to give partial answer to Saltman's problem.
    More precisely, we show that the conditions (A) and (B) above can be phrased
    in terms of existence of certain bilinear forms, and use this observation to give some positive and negative results,
    Saltman's Theorem in particular.

\medskip

    We show that (C)$\derives$(B)
    when $R$ is semilocal or when $\dim_{\Q}(R\otimes_{\Z}\Q)$ and $|\ker(R\to R\otimes_{\Z}\Q)|$ are finite
    (i.e.\ when $R$ is \emph{$\Q$-finite}). Further results of similar flavor are also obtained.
    These include:
    \begin{enumerate}
        \item[(1)] If $R$ is \emph{semiperfect} (e.g.\ artinian)
        and Morita equivalent to $R^\op$, then $S$, the \emph{basic ring}
        that is Morita equivalent to $R$, has an anti-automorphism. In addition, if $R$ has an involution, then
        so does $\nMat{S}{2}$.
        \item[(2)] Suppose $R$ is semilocal. If $\nMat{R}{n}$ has an involution, then so does $\nMat{R}{2}$,
        and
        under mild assumptions (see Theorem~\ref{AZ:TH:involution-transfer} below), $R$ itself has an involution.
    \end{enumerate}
    In the special case that $R$ is a division ring, (2) implies that $\nMat{R}{n}$ has an involution if and only if
    $R$ has an involution, a result obtained by Albert (e.g.\  \cite[Th.\ 10.12]{Al61StructureOfAlgs}) when $[R:\Cent(R)]<\infty$
    and by Herstein  (e.g.\ \cite[Th.\ 1.2.2]{Her76}) in the general case.

    In contrast to the above, we show (B)$\nderives$(A) even when $R$ is  a finite dimensional
    algebra over a field,
    and even when it has an  anti-auto\-mor\-phi\-sm
    fixing the center pointwise. Whether (C)$\derives$(B) in  general  is still open (and we believe this is not the case).

\medskip

    We continue by describing
    the proof of Saltman's Theorem by Knus, Parimala and Srinivas  (\cite[\S4]{KnParSri90}) from the perspective of our methods.
    Namely, we  recover this proof as an application of our characterization of (A) in terms of
    general bilinear forms.
    This suppresses
    some of the computations of \cite[\S4]{KnParSri90}. We also recover the sharpening of Saltman's Theorem in case
    the base ring is semilocal.
    Finally, we apply our methods to construct non-trivial Azumaya algebras over Dedekind domains
    satisfying the conditions of Saltman's Theorem,
    but not admitting an involution. (However, these examples are not optimal; see section~\ref{section:no-inv}.)

\medskip

    The main problem of the paper was suggested to us by David Saltman himself after we gave a talk
    about \emph{general bilinear forms} at the 10th Brauer Group conference. The idea
    to apply the same methods for the construction of Azumaya algebras of exponent $2$ not admitting an involution
    was communicated to us by Asher Auel, after posting a previous version of this work on the internet.
    We deeply thank both of them for their contribution.

\medskip

    Sections~\ref{section:Morita} and~\ref{section:forms} are preliminaries:
    Section~\ref{section:Morita} recalls the basics of Morita theory,
    and
    section~\ref{section:forms} recalls general bilinear
    forms.
    In section~\ref{section:double-prog},
    we give a criterion
    in terms of bilinear forms  to when a ring is Morita equivalent to a ring with an involution (resp.\ anti-automorphism).
    This criterion is the core of this paper and it is used several times later.
    In sections~\ref{section:Mor-eq-to-op}--\ref{section:transfer},
    we show that (C)$\derives$(B) under certain finiteness assumptions, as well as other results of the same flavor.
    Section~\ref{section:examples} demonstrates that (B)$\nderives$(A).
    The rest of the paper concerns applications to Azumaya algebras:
    Section~\ref{section:azumaya} recalls some facts about Azumaya algebras, in
    section~\ref{section:first-kind} we show how our methods can reproduce the proof of Saltman's Theorem
    given in \cite{KnParSri90}, and in section~\ref{section:no-inv}, we construct Azumaya algebras of exponent
    $2$ without involution.
    Finally, section~\ref{section:questions} presents
    some questions that we were unable to answer.

\medskip

    {\bf Notation and conventions:} Unless specified otherwise, all rings are assumed to have a unity and ring
    homomorphisms are required to preserve it.
    Given a ring $R$, denote its set of invertible elements by $\units{R}$, its center by $\Cent(R)$,
    and its Jacobson radical by $\Jac(R)$. The $n\times n$
    matrices over $R$ are denoted by $\nMat{R}{n}$. The category of right $R$-modules is denoted by $\rMod{R}$
    and the category of f.g.\ projective right $R$-modules is denoted by $\rproj{R}$. For a subset $X\subseteq R$, we let
    $\Cent_R(X)$ denote the centralizer of $X$ in $R$.
    If a module $M$ can be viewed as a module over several rings, we use $M_R$ (resp.\ ${}_RM$) to denote ``$M$,
    considered as a right (resp.\ left) $R$-module''. Endomorphisms of left (right) modules are applied on the right (left).
    Throughout, a \emph{semisimple} ring means a \emph{semisimple artinian} ring.

\section{Morita Theory}
\label{section:Morita}

    In this section, we recall some facts from Morita Theory, and also
    give a refinement of the main problem of the paper. See \cite[\S18]{La99} or \cite[\S4.1]{Ro88} for proofs and
    further details.

\subsection{Morita Theory}

    Let $R$ be a ring. A right $R$-module $M$ is called a \emph{generator} if every right $R$-module
    is an epimorphic image of $\bigoplus_{i\in I}M$ for $I$ sufficiently large, or equivalently, if
    $R_R$ is a summand of $M^n$ for some $n\in\N$. The module $M$ is called a \emph{progenerator} if
    $M$ is a generator, finitely generated and projective. In this case, we also call $M$ a
    (right) $R$-progenerator.

    Let $S$ be another ring. An \emph{$(S,R)$-progenerator} is an $(S,R)$-bimodule $P$
    such that $P_R$ is a progenerator and $S=\End(P_R)$ (i.e.\ every endomorphism of $P_R$
    is of the form $p\mapsto sp$ for unique $s\in S$). In this case, ${}_SP$ is also a progenerator
    and $R=\End({}_SP)$.

    \smallskip

    The rings $R$, $S$ are said to be \emph{Morita equivalent}, denoted $R\Moreq S$, if the categories $\rMod{R}$
    and $\rMod{S}$ are equivalent. Morita's Theorems assert that:
    \begin{enumerate}
    \item For all equivalences\footnote{
        According to textbooks, an equivalence between two categories $\catA$ and $\catB$
        consists of a quartet $(F,G,\delta,\veps)$ such that $F:\catA\to \catB$
        and $G:\catB\to\catA$ are functors and $\delta:\id_{\catA}\to GF$ and $\veps:\id_{\catB}\to FG$ are natural isomorphisms.
        We do not need this detailed description here and hence we only specify $F$. In this case, the implicit functor
        $G$ is determined up to natural isomorphism.
    } $F:\rMod{S}\to \rMod{R}$
    there exists an $(S,R)$-pro\-genera\-tor $P$ such that $FM$ is \emph{naturally}
    isomorphic to $M\otimes_SP$ for all $M\in\rMod{S}$.
    \item Conversely, for any $(S,R)$-progenerator $P$ the functor $(-)\otimes_S P:\rMod{S}\to \rMod{R}$ is an equivalence of categories.
    \item There is a one-to-one correspondence between equivalences of categories $F:\rMod{S}\to \rMod{R}$
    (considered up to \emph{natural isomorphism}) and isomorphism classes of $(S,R)$-progenerators.
    The correspondence maps the composition of two equivalences to the tensor product of the corresponding progenerators.
    \end{enumerate}
    Note that (3) allows us to ``divide'' progenerators.
    For example, if $P$ is an $(S,R)$-progenerator and $Q$ is an $(S',R)$-progenerator,
    then there exists an $(S',S)$-pro\-genera\-tor $V$, unique up to isomorphism, such that $Q\cong V\otimes_S P$. Indeed,
    if $F:\rMod{S}\to\rMod{R}$ and $G:\rMod{S'}\to\rMod{R}$ correspond to $P$ and $Q$, respectively, then $V$ is the progenerator
    corresponding to $F^{-1}G:\rMod{S'}\to \rMod{S}$. (In fact,  one can take $V=\Hom_R(P,Q)$.)

\medskip

    Every $(S,R)$-progenerator $P$ induces an isomorphism
    $\sigma_P:\Cent(R)\to \Cent(S)$ given by $\sigma_P(r)=s$ where $s$ is the unique element of $\Cent(S)$
    satisfying $sp=pr$ for all $p\in P$. As $\sigma_P$  depends only on the isomorphism class of $P$, it follows
    that any equivalence of categories  $F:\rMod{S}\to\rMod{R}$ induces an isomorphism $\sigma_F:\Cent(R)\to \Cent(S)$.

    Let $C$ be a commutative ring and assume $R$ and $S$ are $C$-algebras. We say that $R$ and $S$ are Morita equivalent
    \emph{as $C$-algebras} or \emph{over $C$}, denoted $R\CMoreq{C} S$, if there exists  an equivalence $F:\rMod{S}\to \rMod{R}$
    such that $\sigma_F(c\cdot 1_R)=c\cdot 1_S$ for all $c\in C$. Equivalently, this means
    that there exists an $(S,R)$-progenerator $P$ such that $cp=pc$ for all $p\in P$ and $c\in C$.

    If $S$ is an arbitrary ring that is Morita equivalent to $R$ and $F:\rMod{S}\to\rMod{R}$ is any equivalence, then
    we can make $S$ into a $C$-algebra by letting $C$ act on $S$ via $\sigma_F$. In this setting, we have
    $S\CMoreq{C} R$.

\subsection{Types}
\label{subsection:types}

    To make the phrasing of some  results in the next sections easier, we now introduce \emph{types}:
    Let $C$ be a commutative ring and let $R$ and $S$ be  central $C$-algebras.
    (The algebra $R$ is central if the map $c\mapsto c\cdot 1_R:C\to\Cent(R)$
    is an isomorphism.)
    Every equivalence of categories $F:\rMod{S}\to \rMod{R}$ gives
    rise to an isomorphism $\sigma_F:\Cent(R)\to \Cent(S)$. As both $\Cent(R)$ and $\Cent(S)$
    are isomorphic to $C$, we can realize $\sigma_F$ as an automorphism of $C$, which
    we call the \emph{type} of $F$. (For example, when $F$ is of type $\id_C$, $R$ is Morita
    equivalent to $S$ as $C$-algebras.)
    Likewise, the type of an $(S,R)$-progenerator $P$ is the type of the equivalence induced by $P$.
    Namely, it is the unique automorphism $\sigma$ of $C$ satisfying $\sigma(c)p=pc$
    for all $p\in P$, $c\in C$.

    Let $\alpha$ be an automorphism or an anti-automorphism of $R$. The \emph{type} of $\alpha$ is defined to be its restriction to $C=\Cent(R)$.
    For example, an involution of $R$ is \emph{of the first kind} (i.e.\ it fixes $\Cent(R)$ pointwise) if and only if
    its type is $\id_{C}$.

    \medskip

    We now make an essential sharpening of the problem presented in section~\ref{section:overview}.
    Let $R$ be a ring, let $C=\Cent(R)$ and let $\sigma\in \Aut(C)$. We look for
    sufficient conditions ensuring that some or all of the following are equivalent:
    \begin{enumerate}
        \item[(A)] $R$ is Morita equivalent over $C$ to a (necessarily central) $C$-algebra with involution \emph{of type $\sigma$}.
        \item[(B)] $R$ is Morita equivalent over $C$ to a (necessarily central) $C$-algebra with an anti-automorphism \emph{of type $\sigma$}.
        \item[(C)] $R$ is Morita equivalent to $R^\op$ via equivalence of type $\sigma$ ($R^\op$ is considered
        as a $C$-algebra in the obvious way).
    \end{enumerate}
    Again, (A)$\derives$(B)$\derives$(C),\footnote{
        To see that (B)$\derives$(C), let $S$ be a central $C$-algebra admitting an anti-automorphism $\alpha$ of type $\sigma$,
        and assume that there is an $(S,R)$-progenerator $P$ of type $\id_C$ (i.e.\ $S\CMoreq{C} R$).
        Let $P'$ be the $(R^\op,S)$-bimodule obtained from $P$ by
        setting $r^\op \cdot p\cdot s=s^\alpha pr$. Then $P'$ is an $(R^\op,S)$-progenerator, hence
        $P'\otimes_S P$ is an $(R^\op,R)$-progenerator, and the latter is easily seen to have type $\sigma$.
    } so we want to show that (B)$\derives$(A) or (C)$\derives$(B).
    Satlman's Theorem (\cite[Th.\ 3.1]{Sa78}) for involutions of the first kind can now be phrased as (C)$\derives$(A) when $R/C$ is
    Azumaya and $\sigma=\id_C$.

\subsection{Progenerators and Scalar Extension}
\label{subsection:tensor-hom}

    We proceed by recalling several facts about the behavior of progenerators
    with respect to scalar extension.

    Throughout, $C$ is a commutative ring and $R$ is a $C$-algebra. All tensor
    products are taken over $C$. If $\sigma$ is
    an automorphism of $C$, then $R^\sigma$ denotes
    the $C$-algebra obtained from $R$ by letting $C$ act via $\sigma$. Observe
    that for all $M,N\in\rMod{R}$, $\Hom_R(M,N)$ admits a (right) $C$-module structure
    given by $(fc)m=(fm)c$ ($f\in\Hom_R(M,N)$, $c\in C$, $m\in M$).
    In addition, if $R'$ is another $C$-algebra and $M'\in\rMod{R}$, then $M\otimes M'$ has a right $R\otimes R'$-module
    structure given by $(m\otimes m')(r\otimes r')=mr\otimes m'r'$.
    We start with recalling  a well-known fact:

    \begin{prp}\label{AZ:PR:hom-of-tensor}
        Let $R'$ be a $C$-algebra and let $X,Y\in \rMod{R}$, $X',Y'\in\rMod{R'}$.
        Then there is a natural homomorphism of $C$-modules
        \[
            \Hom_R(X,Y)\otimes \Hom_{R'}(X',Y') ~\to~  \Hom_{R\otimes R'}(X\otimes X' ,Y\otimes Y')
        \]
        given by setting $(f\otimes f')(x\otimes x')=fx\otimes f'x'$. This homomorphism
        is an isomorphism whenever $X_R$, $X'_{R'}$ are finitely generated projective.
    \end{prp}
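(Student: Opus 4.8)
The plan is a dévissage on the modules $X$ and $X'$, holding $Y$ and $Y'$ fixed. First I would check that the displayed formula really defines the map. For fixed $f\in\Hom_R(X,Y)$ and $f'\in\Hom_{R'}(X',Y')$, the assignment $(x,x')\mapsto fx\otimes f'x'$ is $C$-balanced, so it descends to a map $X\otimes X'\to Y\otimes Y'$, and this map is $R\otimes R'$-linear because $f$ and $f'$ are $R$- resp.\ $R'$-linear. The resulting assignment $(f,f')\mapsto\bigl(x\otimes x'\mapsto fx\otimes f'x'\bigr)$ is $C$-bilinear, hence factors through $\Hom_R(X,Y)\otimes\Hom_{R'}(X',Y')$; checking it against the right $C$-action $(fc)m=(fm)c$ shows that the induced map $\Phi=\Phi_{X,X'}$ is $C$-linear. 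Naturality of $\Phi$ in all four module arguments is immediate from the formula; in particular, by naturality, $\Phi$ is compatible with finite direct-sum decompositions of $X$ and of $X'$ and with idempotent endomorphisms of them.

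For the base case $X=R$, $X'=R'$: evaluation at the identity gives natural isomorphisms $\Hom_R(R,Y)\cong Y$, $\Hom_{R'}(R',Y')\cong Y'$, and $\Hom_{R\otimes R'}(R\otimes R',Y\otimes Y')\cong Y\otimes Y'$, and under these identifications $\Phi_{R,R'}$ is carried to the identity map of $Y\otimes Y'$ (since $\Phi(f\otimes f')$ sends $1\otimes 1$ to $f1\otimes f'1$). Hence $\Phi_{R,R'}$ is an isomorphism for every choice of $Y$ and $Y'$.

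To propagate this, observe that both $(X,X')\mapsto\Hom_R(X,Y)\otimes\Hom_{R'}(X',Y')$ and $(X,X')\mapsto\Hom_{R\otimes R'}(X\otimes X',Y\otimes Y')$ are additive in $X$ for fixed $X'$ and additive in $X'$ for fixed $X$ (using that $\Hom$ turns finite direct sums in the first variable into direct sums, that $(-)\otimes_C(-)$ preserves finite direct sums, and that $(X_1\oplus X_2)\otimes X'\cong(X_1\otimes X')\oplus(X_2\otimes X')$), and $\Phi$ is compatible with all these identifications. Therefore, from the base case, $\Phi_{R^m,(R')^n}$ is an isomorphism for all $m,n$. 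Finally, given finitely generated projective $X_R$ and $X'_{R'}$, choose for each a split surjection from a finite free module, say $R^m\twoheadrightarrow X$ and $(R')^n\twoheadrightarrow X'$; naturality then exhibits $\Phi_{X,X'}$ as a retract, in the arrow category, of the isomorphism $\Phi_{R^m,(R')^n}$ (equivalently, as the restriction of $\Phi_{R^m,(R')^n}$ to the summand cut out by the idempotents induced on each side), and a retract of an isomorphism is an isomorphism.

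I do not expect a genuine obstacle here. The only points needing a little care are the verification that $\Phi$ is $C$-linear with respect to the nonstandard right $C$-action on the Hom-modules, and keeping track of the two module variables $X$ and $X'$ simultaneously through the additivity and retract steps; both are routine once the rank-one case is settled.
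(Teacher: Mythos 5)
Your proof is correct and follows essentially the same route as the paper's own argument: verify the map directly for $X=R_R$, $X'=R'_{R'}$, then use naturality and additivity in each variable to pass to finite free modules and finally to direct summands. The extra care you take with well-definedness and the right $C$-action on the Hom-modules is fine but does not change the substance.
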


    \begin{proof}
        See \cite[pp.~14]{DeMeyIngr71SeparableAlgebras}, for instance. Alternatively, one can easily check
        that the map in the proposition is an isomorphism in case $X=R_R$ and $X'=R'_{R'}$.
        Since the map is natural (in the functorial sense), it is additive in $X$ and $X'$,
        hence it is an isomorphism whenever $X$ and $X'$ are summands of $R^n$ and $R'^m$, respectively.
    \end{proof}

    As a special case, we get:

    \begin{prp}\label{AZ:PR:standard-hom-III}
        Let $S$ be a $C$-algebra,
        let $R_S:=R\otimes S$, and set $X_S=X\otimes S$ for all $X\in\rMod{R}$.
        (Observe that $X_S$ is a right $R_S$-module via $(x\otimes s)(r\otimes s')=xr\otimes ss'$.)
        Then for all $X,Y\in\rMod{R}$,
        there is a natural homomorphism
        \[
        \Hom_R(X,Y)\otimes  S ~ \to ~ \Hom_{R_S}(X_S ,Y_S)
        \]
        given by setting $(f\otimes s)(x\otimes s')=fx\otimes ss'$ for
        $s,s'\in S$, $x\in X$, $r\in R$.
        This homomorphism is an isomorphism
        when $X_R$ is finitely generated projective.
    \end{prp}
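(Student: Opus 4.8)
The plan is to obtain this simply as the special case of Proposition~\ref{AZ:PR:hom-of-tensor} in which $R'=S$ and $X'=Y'=S_S$, the ring $S$ regarded as a right module over itself. First I would record the canonical identification $\Hom_S(S_S,S_S)\cong S$: since endomorphisms of right modules are applied on the left, a right $S$-linear map $\phi\colon S_S\to S_S$ satisfies $\phi(s')=\phi(1)s'$ for all $s'\in S$, so $\phi\mapsto\phi(1)$ is an isomorphism of $C$-modules whose inverse sends $s\in S$ to left multiplication by $s$.

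Next I would unwind what Proposition~\ref{AZ:PR:hom-of-tensor} yields under this choice. The bimodule $X\otimes X'=X\otimes S$ carries the $R\otimes S=R_S$-module structure $(x\otimes s')(r\otimes s'')=xr\otimes s's''$, which is exactly the structure $X_S$ stipulated in the statement, and likewise $Y\otimes Y'=Y_S$. Thus the proposition furnishes a homomorphism of $C$-modules, natural in $X$ and $Y$,
\[
\Hom_R(X,Y)\otimes\Hom_S(S_S,S_S)~\longrightarrow~\Hom_{R_S}(X_S,Y_S),\qquad (f\otimes\phi)(x\otimes s')=fx\otimes\phi(s').
\]
Composing with the identification of the previous paragraph, $\phi$ corresponds to $s=\phi(1)$ and $\phi(s')=ss'$, so the map becomes $(f\otimes s)(x\otimes s')=fx\otimes ss'$, precisely the map in the statement.

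Finally, since $S_S$ is free of rank one and hence finitely generated projective, the last clause of Proposition~\ref{AZ:PR:hom-of-tensor} applies and shows the homomorphism is an isomorphism whenever $X_R$ is finitely generated projective; naturality in $X$ and $Y$ is inherited. The only point that needs attention is tracking the side conventions carefully enough that the identification $\Hom_S(S_S,S_S)\cong S$ produces $fx\otimes ss'$ and not $fx\otimes s's$; beyond this bookkeeping there is no real obstacle.
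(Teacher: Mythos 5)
Your proposal is correct and is exactly the paper's own argument: the paper proves this by taking $R'=X'=Y'=S$ in Proposition~\ref{AZ:PR:hom-of-tensor}, and your careful unwinding of the identification $\Hom_S(S_S,S_S)\cong S$ (including the side conventions) is just the detail the paper leaves implicit.
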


    \begin{proof}
        Take $R'=X'=Y'=S$ in Proposition~\ref{AZ:PR:hom-of-tensor}.
    \end{proof}

    \begin{prp}\label{AZ:PR:scalar-ext-of-prog}
        Let $S$ and $D$ be $C$-algebras and let $P$ be an $(S, R)$-progenerator of type $\sigma\in\Aut(C)$.
        Put $R_D=R\otimes  D$, $S^\sigma_D=(S^\sigma)\otimes D$ and
        $P_D=P\otimes  D$, and endow $P_D$ with an
        $(S^\sigma_D,R_D)$-bimodule structure  by setting $(s\otimes d)(p\otimes d')(r\otimes d'')=(s pr)\otimes(dd'd'')$.
        Then
        $P_D$ is an $(S_D^\sigma,R_D)$-progenerator of type $\id_D$.
    \end{prp}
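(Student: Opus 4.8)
The plan is to reduce everything to the stability of ``finitely generated projective'' and ``generator'' under the scalar extension $-\otimes_C D$, together with Proposition~\ref{AZ:PR:standard-hom-III}. The one conceptual point that needs attention is the twist: since $P$ has type $\sigma$, the ring isomorphism $S\to\End(P_R)$, $s\mapsto(p\mapsto sp)$, is \emph{not} $C$-linear; it carries $\sigma(c)1_S$ to the endomorphism $p\mapsto pc$, which is precisely the identity $\sigma(c)p=pc$ defining the type. Equivalently, it is a $C$-algebra isomorphism $S^\sigma\xrightarrow{\sim}\End(P_R)$. This is the only place the hypothesis on the type of $P$ enters, and it is exactly why $S^\sigma$, rather than $S$, occurs in the conclusion.

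First I would verify that $(P_D)_{R_D}$ is a progenerator. As $P_R$ is a progenerator there are $R$-module isomorphisms $P\oplus P'\cong R^m$ and $R\oplus R'\cong P^n$ for suitable modules $P',R'$ and integers $m,n$. Applying the additive functor $-\otimes_C D$, which sends $R_R$ to $(R_D)_{R_D}$, yields $R_D$-module isomorphisms $P_D\oplus(P'\otimes_C D)\cong (R_D)^m$ and $R_D\oplus(R'\otimes_C D)\cong(P_D)^n$, so $(P_D)_{R_D}$ is finitely generated projective and a generator.

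Next I would identify $\End((P_D)_{R_D})$. Applying Proposition~\ref{AZ:PR:standard-hom-III} with $X=Y=P_R$ and with the ring ``$S$'' of that proposition taken to be $D$, and using that $P_R$ is finitely generated projective, gives a natural isomorphism of $C$-modules $\End(P_R)\otimes_C D\xrightarrow{\sim}\End_{R_D}(P_D)$; when $X=Y$ one checks this map is unital and multiplicative, hence a ring isomorphism. Tensoring the $C$-algebra isomorphism $S^\sigma\cong\End(P_R)$ of the first paragraph with $D$ and composing, we obtain a ring isomorphism $S^\sigma_D=S^\sigma\otimes_C D\xrightarrow{\sim}\End_{R_D}(P_D)$; unwinding definitions, this isomorphism is left multiplication of $S^\sigma_D$ on $P_D$ via the bimodule structure in the statement, so $(P_D)_{R_D}$ is a progenerator with endomorphism ring $S^\sigma_D$, i.e.\ $P_D$ is an $(S^\sigma_D,R_D)$-progenerator. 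Finally, the claim about the type is immediate from the stated bimodule structure: for $d$ in the image of $\Cent(D)$ both the left action of $1\otimes d\in S^\sigma_D$ and the right action of $1\otimes d\in R_D$ send $p\otimes d'$ to $p\otimes dd'$, so the isomorphism $\Cent(R_D)\to\Cent(S^\sigma_D)$ induced by $P_D$ fixes the image of $D$; that is, $P_D$ has type $\id_D$.

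I do not expect a genuine obstacle here: once the $\sigma$-semilinearity of $S\cong\End(P_R)$ is isolated, the remainder is bookkeeping. The two points that repay a little care are (i) checking that the $(S^\sigma_D,R_D)$-bimodule structure on $P_D=P\otimes_C D$ is well defined modulo the tensor relations — this uses that $C$ maps into $\Cent(D)$ (as $D$ is a $C$-algebra), that $C=\Cent(R)$, and the type identity $\sigma(c)p=pc$ — and (ii) confirming that the isomorphism of Proposition~\ref{AZ:PR:standard-hom-III} respects composition, so that it transports the ring structure of $\End(P_R)\otimes_C D$ and not merely its $C$-module structure.
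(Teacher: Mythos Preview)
Your proof is correct and follows essentially the same approach as the paper's: identify $\End(P_R)\cong S^\sigma$ as $C$-algebras via the type condition, apply Proposition~\ref{AZ:PR:standard-hom-III} to get $\End_{R_D}(P_D)\cong S^\sigma_D$, observe that $P_D$ is an $R_D$-progenerator since $P$ is an $R$-progenerator, and verify the type directly. Your write-up is simply more explicit about the points the paper dismisses as ``routine'' and ``straightforward computation''.
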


    \begin{proof}
        Since $P$ has type $\sigma$, $\End(P_R)\cong S^\sigma$ as $C$-algebras.
        By Proposition~\ref{AZ:PR:standard-hom-III}, $\End_{R_D}(P_D)=\Hom_{R_D}(P_D,P_D)\cong \Hom_R(P,P)\otimes D=S^\sigma\otimes D=S^\sigma_D$.
        It is routine to verify that the action of $S_D^\sigma$ on $P_D$ via endomorphisms is the action specified in the
        proposition.
        Finally, that $P$ is a right $R$-progenerator implies that $P_D$ is  a right $R_D$-progenerator,
        hence $P_D$ is an $(S^\sigma_D,R_D)$-progenerator. The type is verified by straightforward computation.
    \end{proof}

    \begin{prp}\label{AZ:PR:one-dual-commutes-with-scalar-ext-I}
        Let $P$ be an $(S,R)$-progenerator and let $N$ and $M$ be the
        prime radicals (resp.\ Jacobson radicals) of $R$ and $S$, respectively.
        View $\quo{P}:=P/PN$ as a right $\quo{R}:=R/N$-module.
        Then $PN=MP$, hence $\quo{P}=P/MP$
        admits a left $\quo{S}$-module structure. Furthermore,
        $\quo{P}$ is an $(\quo{S},\quo{R})$-progenerator.
    \end{prp}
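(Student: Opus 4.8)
The plan is to reduce everything to the classical fact that a progenerator induces an order- and product-preserving bijection between the two-sided ideals of $R$ and those of $S$, and then to observe that under this bijection the prime radical (resp.\ Jacobson radical) of $R$ corresponds to that of $S$. Granting this, $PN=MP$ is immediate, and the remaining assertions follow by restricting the Morita equivalence to the appropriate quotient categories.

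\textbf{The ideal correspondence.} Let $Q=\Hom_R(P_R,R_R)$, which is an $(R,S)$-progenerator, so that $F:=(-)\otimes_S P\colon\rMod{S}\to\rMod{R}$ and $G:=(-)\otimes_R Q\colon\rMod{R}\to\rMod{S}$ are mutually inverse equivalences, with bimodule isomorphisms $P\otimes_S Q\cong R$ and $Q\otimes_R P\cong S$ compatible with the multiplication maps. For a two-sided ideal $I\idealof R$ the sub-bimodule $PI$ of ${}_SP_R$ equals $JP$ for a unique two-sided ideal $J\idealof S$ (e.g.\ $J=\{s\in S:sP\subseteq PI\}$), and $I\mapsto J$ is an order isomorphism between the ideal lattices of $R$ and $S$, with inverse sending $J$ to the ideal $I$ with $PI=JP$; it preserves products, since if $I\leftrightarrow J$ and $I'\leftrightarrow J'$ then $P(II')=(PI)I'=(JP)I'=J(PI')=(JJ')P$, so $II'\leftrightarrow JJ'$. (All of this is standard; see \cite[\S18]{La99} or \cite[\S4.1]{Ro88}.) Now fix $I\leftrightarrow J$ with $PI=JP$. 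The full subcategory of $\rMod{R}$ of modules annihilated by $I$ is exactly $\rMod{R/I}$ (and similarly for $S$), and $F$ maps $\rMod{S/J}$ into $\rMod{R/I}$: if $MJ=0$ then $(M\otimes_S P)I=M\otimes_S(PI)=M\otimes_S(JP)=(MJ)\otimes_S P=0$, and symmetrically $G$ maps $\rMod{R/I}$ into $\rMod{S/J}$. Hence $F$ restricts to an equivalence $\rMod{S/J}\to\rMod{R/I}$, and by Morita's Theorem the $(S/J,R/I)$-progenerator it corresponds to is $F(S/J)=(S/J)\otimes_S P\cong P/JP=P/PI$; in particular $P/PI$ is an $(S/J,R/I)$-progenerator.

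\textbf{The bijection preserves the radicals.} It remains to prove $N\leftrightarrow M$. For the prime radicals: product-preservation and $0\leftrightarrow0$ show that $I$ is a nilpotent ideal iff $J$ is, hence $R/I$ is semiprime iff $S/J$ is; since the prime radical is the smallest two-sided ideal with semiprime quotient, the smallest such $I$ must correspond to the smallest such $J$. For the Jacobson radicals: any equivalence carries simple modules to simple modules, and a simple module $U$ over $R$ is faithful over $R/\ann_R(U)$, so by the previous paragraph $\ann_R(U)$ is matched with $\ann_S(F^{-1}U)$ under the ideal bijection; therefore $\Jac(R)=\bigcap_U\ann_R(U)$ corresponds to $\bigcap_U\ann_S(U)=\Jac(S)$ (intersections being preserved, as the bijection is a lattice isomorphism). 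Alternatively, one may invoke $\Jac(\nMat{R}{n})=\nMat{\Jac(R)}{n}$ together with $\Jac(eTe)=e\Jac(T)e$ applied to an idempotent presentation of $P_R$. Either way $N\leftrightarrow M$, i.e.\ $PN=MP$.

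Finally, $\quo P=P/PN=P/MP$ is an $(S,R)$-bimodule annihilated on the right by $N$ and on the left by $M$, hence an $(\quo S,\quo R)$-bimodule, and by the last assertion of the first step (applied with $I=N$, $J=M$) it is precisely the $(\quo S,\quo R)$-progenerator attached to the restricted equivalence $F\colon\rMod{\quo S}\to\rMod{\quo R}$. I expect the only genuinely substantive point to be the Morita invariance of the prime and Jacobson radicals in the second step; everything else is formal bookkeeping with the tensor--hom calculus of Propositions~\ref{AZ:PR:hom-of-tensor}--\ref{AZ:PR:scalar-ext-of-prog}.
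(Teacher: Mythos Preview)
Your proposal is correct and follows essentially the same approach as the paper. Both arguments set up the ideal correspondence $I\leftrightarrow J$ via $PI=JP$, establish that $P/PI$ is an $(S/J,R/I)$-progenerator, and then invoke the Morita invariance of the prime and Jacobson radicals to conclude $N\leftrightarrow M$. The only differences are presentational: the paper obtains $\End_{R/I}(P/PI)\cong S/J$ by a direct $\Hom$ computation (using $J=\Hom_R(P,PI)$ and the exact sequence $0\to\Hom_R(P,PI)\to\Hom_R(P,P)\to\Hom_R(P,P/PI)\to 0$), whereas you restrict the equivalence $F$ to the quotient categories and read off the progenerator from Morita's theorem; and the paper simply cites \cite[Cors.\ 18.45 \& 18.50]{La99} for the radical correspondence, whereas you supply the short self-contained arguments (product-preservation for the prime radical, annihilators of simples for the Jacobson radical). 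Neither difference is substantive.
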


    \begin{proof}
        By \cite[Pr.~18.44]{La99}, there is an isomorphism between the lattice of $R$-ideals and
        the lattice of $(S,R)$-submodules of $P$ given by $I\mapsto PI$. Similarly, the ideals of $S$
        correspond to $(S,R)$-submodules of $P$ via $J\mapsto JP$, hence every ideal $I\idealof R$
        admits a unique ideal $J\idealof S$ such that $JP=PI$. The ideal $J$ can also be
        described as $\Hom_R(P,PI)$. This description implies that
        $S/J= \Hom_R(P,P)/\Hom(P,PI)\cong\Hom_R(P,P/PI)\cong\End_{R/I}(P/PI)$. Thus,
        $P/PI$ is an $(S/J,R/I)$-progenerator.
        Choose $I=N$. Then by \cite[Cor.\ 18.45]{La99} (resp.\
        \cite[Cor.\ 18.50]{La99}) $J=M$, so we are done.
    \end{proof}

\section{General Bilinear Forms}
\label{section:forms}

    General bilinear forms were introduced in \cite{Fi13A}. In this section, we recall their basics
    and record several facts to be needed later. When not specified, proofs can be found in \cite[\S2]{Fi13A}.
    Throughout, $R$ is a (possibly non-commutative) ring.

    \begin{dfn} \label{AZ:DF:double-module}
        A \emph{(right) double $R$-module} is an additive group $K$ together with two
        operations $\mul{0},\mul{1}:K\times R\to K$ such that $K$ is a right $R$-module
        with respect to each of $\mul{0}$, $\mul{1}$ and
        \[(k\mul{0}a)\mul{1}b=(k\mul{1}b)\mul{0}a\qquad \forall ~k\in K,~a,b\in R\ .\]
        We let $K_i$ denote the $R$-module obtained by letting $R$ act on $K$ via $\mul{i}$.

        For two double $R$-modules $K,K'$, we define $\Hom(K,K')=\Hom_R(K_0,K'_0)\cap\Hom_R(K_1,K'_1)$.
        This makes the class of double $R$-modules into an abelian category (which is
        isomorphic to
        $\rMod{(R\otimes_\Z R)}$ and also to the category
        of $(R^\op,R)$-bimodules).\footnote{
            The usage of double $R$-modules, rather than $(R^\op,R)$-modules or $R\otimes_{\Z}R$-modules,
            was more convenient in \cite{Fi13A}, so we
            follow the notation of that paper. In addition, the notion of double module
            is more natural when considering bilinear forms as a special case
            of multilinear forms, where the form takes values in a (right) \emph{multi-$R$-module}.
        }

        An \emph{involution} of a double $R$-module $K$ is
        an additive bijective  map $\theta:K\to K$ satisfying  $\theta\circ \theta=\id_K$ and
        \[(k \mul{i} a)^\theta=k^\theta \mul{1-i} a \qquad\forall a\in R,~k\in K,~i\in\{0,1\}\ .\]
    \end{dfn}

    \begin{dfn}
    A (general) \emph{bilinear space} over $R$ is a triplet $(M,b,K)$ such that $M\in\rMod{R}$, $K$ is a double $R$-module
    and $b:M\times M\to K$ is a biadditive map
    satisfying
    \[b(xr,y)=b(x,y)\mul{0} r\qquad\textrm{and}\qquad b(x,yr)=b(x,y)\mul{1}r\]
    for all $x,y\in M$ and $r\in R$.
    In this case, $b$ is called a (general) \emph{bilinear form} (over $R$).
    Let $\theta$ be an involution of $K$.
    The form $b$ is called \emph{$\theta$-symmetric} if
    \[b(x,y)=b(y,x)^\theta\qquad\forall x,y\in M\ .\]
    \end{dfn}

    See \cite[\S2]{Fi13A} for various examples of general bilinear forms.

    \medskip

    Fix a double $R$-module $K$ and let $i\in\{0,1\}$. The \emph{$i$-$K$-dual} (or just $i$-dual) of
    an $R$-module $M$ is defined by
    \[M^{[i]}:=\Hom_{R}(M,K_{1-i})\ .\]
    Note that $M^{[i]}$  admits
    a right $R$-module structure given by $(fr)(m)=(fm)\mul{i}r$ (where $f\in M^{[i]}$, $r\in R$ and $m\in M$).
    In fact,
    $M\mapsto M^{[i]}$ is a left-exact contravariant functor from $\rMod{R}$ to itself, which we denote by $[i]$.
    Also observe that $R^{[i]}\cong K_i$ via $f\leftrightarrow f(1)$.

    Let $b:M\times M\to K$ be a (general) bilinear form. The \emph{left adjoint map} and \emph{right adjoint map}
    of $b$ are defined as follows:
    \[\lAd{b}:M\to M^{[0]},\quad (\lAd{b} x)(y)=b(x,y)\ ,\]
    \[\rAd{b}: M\to M^{[1]},\quad (\rAd{b} x)(y)=b(y,x)\ ,\]
    where $x,y\in M$. It is straightforward to check that $\lAd{b}$
    and $\rAd{b}$ are right $R$-linear. We say that
    $b$ is right (resp.\ left) \emph{regular} if $\rAd{b}$ (resp.\ $\lAd{b}$) is bijective.
    If $b$ is both right and left regular, we say that $b$ is regular. Left and right regularity are not
    equivalent properties; see  \cite[Ex.~2.6]{Fi13A}.

    Assume $b$ is regular. Then for every $w\in\End_R(M)$ there exists a \emph{unique} element $w^\alpha\in\End_R(M)$
    such that
    \[
    b(wx,y)=b(x,w^\alpha y)\qquad\forall x,y\in M\ .
    \]
    The map $w\mapsto w^\alpha$, denoted $\alpha$, turns out
    to be an anti-automorphism of $\End_R(M)$ which is called the (right) \emph{corresponding anti-automorphism} of $b$.
    (The left corresponding anti-automorphism of $b$ is the inverse of $\alpha$.)
    If $b$ is $\theta$-symmetric
    for some involution $\theta:K\to K$, then $\alpha$ is easily seen to be an involution.

    We say that two bilinear spaces $(M,b,K)$, $(M,b',K')$ are \emph{similar} if there is an isomorphism $f:K\to K'$
    such that $b'=f\circ b$. It is easy to
    see that in this case, $b$ and $b'$ have the same corresponding anti-automorphism, provided they are regular.

    \begin{thm}[{\cite[Th.\ 5.7]{Fi13A}}]\label{AZ:TH:correspondence}
        Let $M$ be a right $R$-generator. Then the map sending a regular
        bilinear form on $M$ to its corresponding anti-automorphism gives rise to a one-to-one correspondence between
        the class of regular bilinear forms on $M$, considered up to similarity, and the anti-automorphisms
        of $\End_R(M)$. Furthermore, the correspondence maps the equivalence
        classes of forms which are $\theta$-symmetric with respect to some $\theta$ to the involutions of $\End_R(M)$.
    \end{thm}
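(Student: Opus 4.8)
The plan is to fix a right $R$-generator $M$, set $E=\End_R(M)$, and exhibit the correspondence explicitly in both directions, then check the two assignments are mutually inverse up to similarity. In one direction, the excerpt already supplies the map: a regular bilinear form $b$ on $M$ has a well-defined corresponding anti-automorphism $\alpha=\alpha_b$ of $E$, and this depends only on the similarity class of $b$ (as noted immediately before the theorem). So the real work is the reverse direction: given an anti-automorphism $\alpha$ of $E$, produce a regular bilinear form $b$ on $M$ with $\alpha_b=\alpha$, and show it is unique up to similarity. The key point where the generator hypothesis enters is that $M$ is then a progenerator over $E$ (by Morita theory: since $M$ is a generator, $R$ is a summand of $M^n$, and $M_R$ is automatically f.g.\ projective because generators are, so $M$ is an $(E,R)$-progenerator), and hence $E$-$R$-bimodule data can be recovered from $R$-module data.

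\smallskip

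First I would construct the double $R$-module $K$ from $\alpha$. The natural candidate is $K=M^{[0]}\otimes_? \cdots$ — more concretely, one takes the $R$-module $M^*:=\Hom_R(M, ?)$; but since $M$ is an $(E,R)$-progenerator, the dual $P:=\Hom_R(M,R)$ is an $(R,E)$-progenerator with $M\otimes_E P\cong R$ and $P\otimes_R M\cong E$ as bimodules. Using $\alpha$, twist the left $E$-action on $M$ to a right $E$-action, producing from $M$ an $(R,E)$-bimodule $M^\alpha$; then set $K:=M^\alpha\otimes_E P$ — wait, more cleanly: set $K = \Hom_R(M,R)\otimes_E M$ with the two $R$-actions coming from the right $R$-structure of $\Hom_R(M,R)$ and of $M$ respectively, one of them pre-composed with $\alpha$ to make it land correctly. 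The two commuting right $R$-actions make $K$ a double $R$-module, and by construction $K_0\cong M^{[0]}$-compatibly. Then define $b:M\times M\to K$ by a trace/evaluation formula, $b(x,y)=\sum \varphi_i(x)\otimes (y\,\text{twisted by}\,\alpha\text{-data})$ using a dual basis $\{(m_i,\varphi_i)\}$ for the progenerator $M$; biadditivity and the two module-compatibility identities $b(xr,y)=b(x,y)\mul 0 r$, $b(x,yr)=b(x,y)\mul 1 r$ are then a routine check. Regularity of $b$ amounts to $\lAd b$ and $\rAd b$ being the canonical evaluation isomorphisms $M\xrightarrow{\sim}M^{[0]}$, $M\xrightarrow{\sim}M^{[1]}$ coming from $M$ being a progenerator, so both are bijective. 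Finally one verifies $\alpha_b=\alpha$: unwinding the definitions, $b(wx,y)=b(x,w^\alpha y)$ becomes the defining relation of the twisted action, i.e.\ precisely $w\mapsto w^\alpha$.

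\smallskip

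For uniqueness up to similarity, suppose $b$ and $b'$ are regular bilinear forms on $M$ with $\alpha_b=\alpha_{b'}=\alpha$. Composing, $g:=\rAd{b'}\circ(\rAd{b})^{-1}$ is an $R$-automorphism of $M^{[1]}$; I would show $g$ is induced by an isomorphism $f:K\to K'$ of double $R$-modules. Here again one uses $M$ being a progenerator: an $R$-linear endomorphism of $M^{[1]}=\Hom_R(M,K_0')$ that is also compatible with the $\mul 1$-action is the same as a double-module map $K\to K'$, via $R^{[1]}\cong K_1$ and Morita. Checking $b'=f\circ b$ then reduces to the identity $\rAd{b'}=f_*\circ\rAd b$, which holds since both forms induce the same anti-automorphism and $M$ generates. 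The symmetric refinement is then essentially formal: if $\theta$ is an involution of $K$ and $b$ is $\theta$-symmetric, then $\alpha_b$ is an involution (stated in the excerpt); conversely, given an involution $\alpha$ of $E$, the $K$ constructed above carries a canonical involution $\theta$ swapping $\mul 0$ and $\mul 1$ (because $\alpha^2=\id$ makes the twisted bimodule symmetric), and the $b$ built from $\alpha$ is $\theta$-symmetric — one checks $b(x,y)^\theta=b(y,x)$ directly from the dual-basis formula.

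\smallskip

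The main obstacle I anticipate is the bookkeeping in constructing $K$ and $b$ from $\alpha$ so that \emph{both} right $R$-actions on $K$ are correct simultaneously and the two adjoint maps come out to be the canonical evaluation isomorphisms; getting the $\alpha$-twist on the right side rather than the left, and tracking which of the two $R$-actions it affects, is the delicate part. Once $K$, $b$ are set up correctly, verifying regularity and $\alpha_b=\alpha$ is a matter of chasing the Morita isomorphisms $M\otimes_E\Hom_R(M,R)\cong R$ and $\Hom_R(M,R)\otimes_R M\cong E$. The generator hypothesis on $M$ is used exactly once but crucially — to know $M$ is an $(E,R)$-progenerator — and everything else is bilinear-algebra over that Morita context.
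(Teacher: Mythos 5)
Your overall plan (build $K$ and $b$ explicitly from $\alpha$, check regularity, prove uniqueness up to similarity, then refine to the $\theta$-symmetric/involution case) has the same shape as the explicit construction the paper records right after the theorem, but two of your steps are genuinely wrong. First, the hypothesis is that $M$ is a \emph{generator}, not a progenerator, and your claim that ``$M_R$ is automatically f.g.\ projective because generators are'' is false: $\Z\oplus\Q$ is a $\Z$-generator that is neither finitely generated nor projective. The correct Morita fact is the dual one: if $M_R$ is a generator then ${}_EM$ is f.g.\ projective and $R\cong\End_E(M)$; equivalently the trace pairing $\Hom_R(M,R)\otimes_E M\to R$ is an isomorphism, while the other pairing $M\otimes_R\Hom_R(M,R)\to E$ need not be surjective. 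Everything in your argument that leans on a dual basis for $M_R$, on both Morita isomorphisms holding simultaneously, or on ``canonical evaluation isomorphisms $M\to M^{[i]}$'', is therefore unavailable at the stated level of generality; regularity of the constructed form has to be extracted from the f.g.\ projectivity of ${}_EM$ instead.

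Second, neither of the two candidates you float for $K$ works. With $P=\Hom_R(M,R)$, the expression $M^\alpha\otimes_E P$ does not typecheck ($P$ is an $(R,E)$-bimodule, not a left $E$-module in the required slot), and $\Hom_R(M,R)\otimes_E M$ is an $(R,R)$-bimodule, i.e.\ it carries a \emph{left} and a right $R$-action, whereas a double $R$-module needs two commuting \emph{right} actions; you cannot fix this by ``pre-composing with $\alpha$'', since $\alpha$ lives on $E$, not on $R$. The construction the paper records avoids $\Hom_R(M,R)$ entirely: with $W=\End_R(M)$, let $M^\alpha$ be $M$ equipped with the right $W$-action $m\cdot w=w^\alpha m$ (a right action precisely because $\alpha$ is an anti-homomorphism) while retaining the original right $R$-action of $M$, and set $K_\alpha=M^\alpha\otimes_W M$ with $(x\otimes y)\mul{0}r=x\otimes yr$ and $(x\otimes y)\mul{1}r=xr\otimes y$; then $b_\alpha(x,y)=y\otimes x$, and when $\alpha^2=\id$ the involution is $\theta_\alpha(x\otimes y)=y\otimes x$. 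No dual-basis trace formula is needed. Your uniqueness and symmetry paragraphs are plausible in outline, but as written they inherit both problems above.
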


    The theorem implies that  every anti-automorphism $\alpha$ of $\End_R(M)$, with $M$ an $R$-generator, is induced by some regular bilinear
    form on $M$, which is unique up to similarity.
    We will denote this form by $b_\alpha$ and the double $R$-module
    in which it takes values by $K_\alpha$. In case $\alpha$ is an involution, then $b_\alpha$ is symmetric with
    respect to some involution of $K_\alpha$, which we denote by $\theta_\alpha$.

    The objects $K_\alpha$, $b_\alpha$ and $\theta_\alpha$ can be explicitly constructed as follows: Let $W=\End_R(M)$.
    Then $M$ is a left $W$-module. Using $\alpha$, we may view $M$ as a right $W$-module by
    defining $m\cdot w=w^\alpha m$. Denote by $M^\alpha$ the right $W$-module  obtained.
    Then $K_\alpha=M_\alpha\otimes_W M$. We make $K_\alpha$ into a double $R$-module by defining
    $(x\otimes y)\mul{0}r=x\otimes yr$ and $(x\otimes y)\mul{1} r=xr\otimes y$.
    The form $b_\alpha$ is given by $b_\alpha(x,y)=y\otimes x$, and when $\alpha$ is an involution,
    $\theta_\alpha:K_\alpha\to K_\alpha$ is defined by $(x\otimes y)^{\theta_\alpha}=y\otimes x$.

    \medskip

    We will also need the following technical proposition.

    \begin{prp}[{\cite[Lm.\ 7.7]{Fi13A}}]\label{AZ:PR:dfn-of-phi}
        Fix a double $R$-module $K$. For $M\in\rMod{R}$, define $\Phi_M:M\to M^{[1][0]}$ by
        $(\Phi_Mx)f=f(x)$ for all $x\in M$ and $f\in M^{[1]}$. Then:
        \begin{enumerate}
            \item[(i)] $\{\Phi_M\}_{M\in\rMod{R}}$ is a \emph{natural transformation} from
            $\id_{\rMod{R}}$ to $[0][1]$ (i.e.\ for all $N,N'\in\rMod{R}$ and $f\in \Hom_R(N,N')$, one has
            $f^{[1][0]}\circ\Phi_N=\Phi_{N'}\circ f$).
            \item[(ii)] $\Phi$ is additive (i.e.\ $\Phi_{N\oplus N'}=\Phi_N\oplus\Phi_{N'}$ for all $N,N'\in\rMod{R}$).
            \item[(iii)] $(\rAd{b})^{[0]}\circ\Phi_M=\lAd{b}$ for every general bilinear form $b:M\times M\to K$.
            \item[(iv)] $R^{[1][0]}$ can be identified with $\End_R(K_1)$. Under that identification,
            $(\Phi_Rr)k=k\mul{0} r$ for all $r\in R$ and $k\in K$.
        \end{enumerate}
    \end{prp}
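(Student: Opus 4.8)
The plan is to establish (i)--(iv) by unwinding the definitions, the only real care being to keep track of which of the two actions $\mul{0}$, $\mul{1}$ is in force at each point and of the contravariance of the duality functors $[0]$ and $[1]$. For (i), I would first confirm that $\Phi_M x$ really is an element of $M^{[1][0]}=\Hom_R(M^{[1]},K_1)$: since $r\in R$ acts on $M^{[1]}$ by $(fr)(m)=f(m)\mul{1}r$, we get $(\Phi_M x)(fr)=(fr)(x)=f(x)\mul{1}r=\bigl((\Phi_M x)(f)\bigr)\mul{1}r$, which is exactly $\mul{1}$-linearity of $\Phi_M x$. For naturality, note that for $\phi\colon N\to N'$ the map $\phi^{[1]}$ acts as precomposition with $\phi$ and $\phi^{[1][0]}$ as precomposition with $\phi^{[1]}$; hence for $f'\in (N')^{[1]}$ one computes $(\phi^{[1][0]}\Phi_N x)(f')=(\Phi_N x)(f'\circ\phi)=f'(\phi x)=(\Phi_{N'}(\phi x))(f')$, so $\phi^{[1][0]}\circ\Phi_N=\Phi_{N'}\circ\phi$. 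For (ii) I would invoke that each $\Hom_R(-,K_j)$ sends finite direct sums to finite direct sums, so $[0]$, $[1]$ and their composite are additive functors; feeding the canonical inclusions and projections of $N\oplus N'$ into the naturality square of (i) then forces $\Phi_{N\oplus N'}=\Phi_N\oplus\Phi_{N'}$ (or one simply evaluates both sides on a generic pair $(x,x')$).

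For (iii), the identity is immediate on elements: as $(\rAd{b})^{[0]}$ sends $g\in M^{[1][0]}$ to $g\circ\rAd{b}$, we have $(\rAd{b})^{[0]}(\Phi_M x)=\Phi_M x\circ\rAd{b}$, so for $x,y\in M$,
\[((\rAd{b})^{[0]}(\Phi_M x))(y)=(\Phi_M x)(\rAd{b}y)=(\rAd{b}y)(x)=b(x,y)=(\lAd{b}x)(y),\]
whence $(\rAd{b})^{[0]}\circ\Phi_M=\lAd{b}$. For (iv), I would begin from the identification $R^{[i]}\cong K_i$, $f\mapsto f(1)$, already recorded in this section; applying the contravariant functor $[0]$ to $R^{[1]}\cong K_1$ identifies $R^{[1][0]}$ with $K_1^{[0]}=\Hom_R(K_1,K_1)=\End_R(K_1)$. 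It then remains to chase $\Phi_R$ through this identification: $\Phi_R r$ is the functional $f\mapsto f(r)$ on $R^{[1]}$, and if $f(1)=k$ then $f(r)=f(1\cdot r)=f(1)\mul{0}r=k\mul{0}r$, so $\Phi_R r$ corresponds to the self-map $k\mapsto k\mul{0}r$ of $K_1$; the double-module axiom $(k\mul{1}b)\mul{0}r=(k\mul{0}r)\mul{1}b$ confirms this map is $\mul{1}$-linear, hence indeed lies in $\End_R(K_1)$, giving $(\Phi_R r)k=k\mul{0}r$.

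None of the four parts poses a genuine obstacle; the argument is entirely formal. The single place calling for attention is (iv), where two dualities are composed and one must check that the module structures match up correctly across the chain $R^{[1][0]}\cong K_1^{[0]}\cong\End_R(K_1)$; everything else reduces to a direct substitution of definitions.
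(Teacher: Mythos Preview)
Your proposal is correct and complete; each of the four parts is handled by the expected direct verification, and your bookkeeping of the $\mul{0}$ versus $\mul{1}$ actions is accurate throughout. Note that the paper does not actually supply a proof of this proposition---it is quoted from \cite[Lm.~7.7]{Fi13A}---so there is nothing to compare against, but the routine computations you outline are exactly what one would give if writing the proof out.
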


    To finish, we recall that the orthogonal sum of two bilinear spaces $(M,b,K)$ and $(M',b',K)$ is defined
    to be $(M\oplus M',b\perp b',K)$  where $(b\perp b')(x\oplus x',y\oplus y')=b(x,y)+b'(x',y')$.
    The form $b\perp b'$ is right regular if and only if $b$ and $b'$ are right regular.

\section{Double Progenerators}
\label{section:double-prog}

    The observation which forms the basis of all results of this paper is the fact that whether
    a ring $R$ is equivalent to a ring with an anti-automorphism (resp.\ involution) can
    be phrased in terms of existence of certain bilinear forms (resp.\ double $R$-modules).
    In this section, we state and prove this criterion (Proposition~\ref{AZ:PR:being-mor-eq-to-ring-with-anti-auto}
    and Theorem~\ref{AZ:TH:being-mor-eq-to-ring-with-inv}).

    Throughout, $R$ is a ring and $C=\Cent(R)$. Recall from section~\ref{section:Morita}
    that for all $M,N\in\rMod{R}$, $\Hom_R(M,N)$ admits a (right) $C$-module structure. In particular,
    $\End_R(M)$ is a $C$-algebra.

    \medskip

    Let $K$ be a double $R$-module. Then $K$ can be viewed as
    an $(R^\op, R)$-bimodule by setting $a^\op\cdot k\cdot b=k\mul{0} b\mul{1}a$. If
    this bimodule is an $(R^\op,R)$-progenerator,
    we say that $K$ is a \emph{double $R$-progenerator}. The \emph{type} of a double $R$-progenerator
    is the type of its corresponding $(R^\op,R)$-module. Namely, it is the automorphism $\sigma\in\Aut(C)$
    satisfying $k\mul{0} c=k\mul{1}\sigma(c)$
    for all $c\in \Cent(R)$, $k\in K$.

    \smallskip

    It turns out that Theorem~\ref{AZ:TH:correspondence} is useful for producing double progenerators.

    \begin{lem}\label{AZ:LM:double-prog-factory}
        Let $M$ be an $R$-progenerator and let $\alpha$ be an anti-automorphism of
        $\End_R(M)$. Then $K_\alpha$ is a double $R$-progenerator. Viewing $\End_R(M)$
        as a $C$-algebra, the type of $K_\alpha$ is the type of $\alpha$.
    \end{lem}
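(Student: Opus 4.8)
The plan is to exploit the explicit construction of $K_\alpha$ recalled just before the lemma, namely $K_\alpha = M^\alpha \otimes_W M$ with $W = \End_R(M)$, and to identify this double $R$-module, as an $(R^\op,R)$-bimodule, with something manifestly a progenerator. First I would recall that since $M$ is an $R$-progenerator, $M$ is also a $(W,R)$-progenerator, where $W=\End_R(M)$; this is exactly the situation of Morita's theorems in Section~\ref{section:Morita}. Dually, ${}_W M$ is a $(R^\op, W)$-progenerator after one rewrites the left $W$-action appropriately, and more to the point, $M^\alpha$ — that is, $M$ with the right $W$-action $m\cdot w = w^\alpha m$ — is an $(R, W)$-bimodule which I claim is an $(R,W)$-progenerator. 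Indeed, $\alpha$ being an anti-automorphism of $W$ means $M^\alpha$ is obtained from the $(W,R)$-progenerator $M$ by twisting the $W$-side by the anti-automorphism $\alpha$ and swapping sides; twisting a progenerator by an (anti-)automorphism of the coefficient ring yields a progenerator (cf.\ the footnote computation in Section~\ref{subsection:types}, where the same device turns an $(S,R)$-progenerator into an $(R^\op,S)$-progenerator using an anti-automorphism of $S$). So $M^\alpha$ is an $(R,W)$-progenerator.

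Next I would invoke the "composition of progenerators is a progenerator" principle: since $M^\alpha$ is an $(R,W)$-progenerator and $M$ (with its tautological bimodule structure) is a $(W,R)$-progenerator, the tensor product $M^\alpha \otimes_W M$ is an $(R,R)$-progenerator — but here "$(R,R)$" is read as $(R^{\op\op},R) = $ well, more carefully, $M^\alpha$ is a \emph{left} $R$-module, hence a right $R^\op$-module, so $M^\alpha\otimes_W M$ is an $(R^\op, R)$-progenerator. The remaining point is to check that the $(R^\op,R)$-bimodule structure on $K_\alpha$ coming from its definition as a double $R$-module, namely $a^\op\cdot k\cdot b = k\mul{0}b\mul{1}a$ with $(x\otimes y)\mul{0}r = x\otimes yr$ and $(x\otimes y)\mul{1}r = xr\otimes y$, agrees with the one just described: on $x\otimes y$ the first gives $(xa)\otimes(yb)$, and tracing through the identifications ($M^\alpha$ as a right $R^\op$-module has $a^\op$ acting as left multiplication by $a$, $M$ as a right $R$-module has $b$ acting on the right) this is exactly $x\otimes y$ moved to $(xa)\otimes(yb)$. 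So the two bimodule structures coincide, and $K_\alpha$ is a double $R$-progenerator.

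Finally, for the statement about types: the type of $K_\alpha$ as an $(R^\op,R)$-progenerator is, by definition, the automorphism $\sigma\in\Aut(C)$ with $k\mul{0}c = k\mul{1}\sigma(c)$ for all $c\in C$, $k\in K_\alpha$. On a generator $x\otimes y$ this reads $x\otimes yc = x\sigma(c)\otimes y$ in $M^\alpha\otimes_W M$. Now $c\in C = \Cent(R)$ acts on $M$ as the endomorphism "multiplication by $c$", which is a \emph{central} element of $W=\End_R(M)$ (viewing $W$ as a $C$-algebra as in Section~\ref{section:double-prog}), and moving a central scalar $w_c\in\Cent(W)$ across the tensor product $\otimes_W$ turns $x\otimes (y\cdot w_c) = x\otimes w_c^{} y$ — wait, one must be careful: $y w_c$ in the right $W$-module $M$ is literally $w_c$ applied to $y$, i.e.\ $yc$; and $x w_c$ in $M^\alpha$ is $w_c^\alpha x$. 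Hence $x\otimes yc = xw_c \otimes y = w_c^\alpha x\otimes y$, so $\sigma(c)$ acting on $M$ must equal $w_c^\alpha$ acting on $M$, i.e.\ $w_{\sigma(c)} = w_c^\alpha = \alpha(w_c)$. Since $c\mapsto w_c$ is the structural isomorphism $C\xrightarrow{\sim}\Cent(W)$, this says precisely that $\sigma$, transported to $\Cent(W)\cong C$, is the restriction of $\alpha$ to the center — that is, $\sigma$ is the type of $\alpha$. This last identification of the two central scalars is the only place requiring genuine care; everything else is a bookkeeping assembly of Morita-theoretic facts already in hand. The main obstacle I anticipate is keeping the left/right and $R$/$R^\op$ conventions consistent through the tensor product $M^\alpha\otimes_W M$, since the double-module operations $\mul{0},\mul{1}$ and the $(R^\op,R)$-bimodule translation $a^\op\cdot k\cdot b = k\mul{0}b\mul{1}a$ each introduce an opportunity for a side-swap error.
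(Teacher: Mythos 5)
Your proof is correct and follows essentially the same route as the paper's: one observes that $M$ is a $(W,R)$-progenerator, that $M^\alpha$ becomes an $(R^\op,W)$-progenerator via the left action $r^\op\cdot m=mr$ inherited from the original right $R$-action (your phrase ``$M^\alpha$ is a left $R$-module, hence a right $R^\op$-module'' has the sides momentarily swapped, but the structure you actually use is the correct one), and then that the tensor product of progenerators over $W$ is an $(R^\op,R)$-progenerator. Your explicit verification that $w_c^\alpha=w_{\sigma(c)}$ correctly fills in the type computation that the paper leaves as ``straightforward.''
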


    \begin{proof}
        Let $W=\End_R(M)$.
        Recall from section~\ref{section:forms} that $K_\alpha=M^\alpha\otimes_W M$. Since $M_R$ is an $R$-pro\-gene\-rator, $M$
        is a $(W,R)$-progenerator. Endow $M^\alpha$ with a left $R^\op$-module structure by
        setting $r^\op m=mr$. Then $M^\alpha$ is an $(R^\op,W)$-bimodule and, moreover, it is easily
        seen to be an $(R^\op,W)$-progenerator (because $M$ is a $(W,R)$-progenerator).
        By Morita theory (see section~\ref{section:Morita}),
        this means that $K_\alpha=M_\alpha\otimes_W M$ is an $(R^\op,R)$-progenerator, as required.
        That the type of $K_\alpha$ is the type of $\alpha$ follows by straightforward computation.
    \end{proof}

    \begin{lem}\label{AZ:LM:right-reg-iff-left-reg}
        Fix a double $R$-progenerator $K$. Then:
        \begin{enumerate}
            \item[(i)] $\Phi_M$ is an isomorphism for all $M\in\rproj{R}$ (see Proposition~\ref{AZ:PR:dfn-of-phi} for the
            definition of $\Phi$).
            \item[(ii)] The (contravariant) functors $[0],[1]:\rproj{R}\to\rproj{R}$ are mutual inverses.
            \item[(iii)] Let $(M,b,K)$ be a bilinear space with $M\in\rproj{R}$. Then $b$ is right regular
            if and only if $b$ is left regular.
        \end{enumerate}
    \end{lem}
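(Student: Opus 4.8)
The plan is to prove (i) by the standard reduction to the rank-one free module, to deduce (ii) from (i) applied to $K$ and to a companion double module, and to obtain (iii) from (i), (ii) and Proposition~\ref{AZ:PR:dfn-of-phi}(iii). The key special case of (i) is that $\Phi_R$ is an isomorphism. Under the identification $R^{[1][0]}\cong\End_R(K_1)$ of Proposition~\ref{AZ:PR:dfn-of-phi}(iv), the map $\Phi_R$ sends $r$ to the additive endomorphism $k\mapsto k\mul{0}r$ of $K_1$. Regarding $K$ as the $(R^\op,R)$-progenerator it is --- its right $R$-action is $\mul{0}$ and its left $R^\op$-action is $\mul{1}$ --- this is precisely the right regular representation of $R$, which by Morita theory is an isomorphism of $R$ onto $\End({}_{R^\op}K)$; and the latter coincides, as a set, with $\{\text{additive maps commuting with }\mul{1}\}=\End_R(K_1)$. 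Hence $\Phi_R$ is bijective. For general $M\in\rproj{R}$, pick $M'$ with $M\oplus M'\cong R^n$: since $\Phi$ is additive (Proposition~\ref{AZ:PR:dfn-of-phi}(ii)), $\Phi_{R^n}=\Phi_R^{\oplus n}$ is an isomorphism; since $\Phi$ is natural (Proposition~\ref{AZ:PR:dfn-of-phi}(i)), transporting along $M\oplus M'\cong R^n$ shows that $\Phi_{M\oplus M'}=\Phi_M\oplus\Phi_{M'}$ is an isomorphism; and a direct summand of a diagonal isomorphism is again an isomorphism, so $\Phi_M$ is an isomorphism.

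For (ii), I would first check that $[0]$ and $[1]$ carry $\rproj{R}$ into itself: a summand of $R^n$ is sent to a summand of $(R^n)^{[i]}\cong K_i^{\,n}$, and $K_0=K_R$ and $K_1={}_{R^\op}K$ are progenerators (in particular f.g.\ projective) since $K$ is a double $R$-progenerator. By (i), $\Phi\colon\id_{\rproj{R}}\to[0]\circ[1]$ is a natural isomorphism. Now apply (i) to the double $R$-module $K'$ given by $\mul{i}'=\mul{1-i}$: as an $(R^\op,R)$-bimodule $K'$ is the transpose of the progenerator $K$, hence again an $(R^\op,R)$-progenerator, i.e.\ a double $R$-progenerator, and its $i$-dual functor is the $(1-i)$-dual functor of $K$; thus (i) for $K'$ gives a natural isomorphism $\id_{\rproj{R}}\to[1]\circ[0]$. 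Therefore $[0]$ and $[1]$ are mutually quasi-inverse contravariant self-equivalences of $\rproj{R}$.

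For (iii), let $(M,b,K)$ be a bilinear space with $M\in\rproj{R}$. Proposition~\ref{AZ:PR:dfn-of-phi}(iii) gives $\lAd{b}=(\rAd{b})^{[0]}\circ\Phi_M$, with $\Phi_M$ an isomorphism by (i). If $b$ is right regular, then $\rAd{b}$, hence $(\rAd{b})^{[0]}$, hence $\lAd{b}$, is an isomorphism, so $b$ is left regular. Conversely, if $b$ is left regular, then $(\rAd{b})^{[0]}=\lAd{b}\circ\Phi_M^{-1}$ is an isomorphism; since $[0]$ is a (contravariant) equivalence of $\rproj{R}$ by (ii), it reflects isomorphisms, so $\rAd{b}$ is an isomorphism and $b$ is right regular. (Equivalently, the converse is the forward implication applied to the $K'$-valued form $b'(x,y):=b(y,x)$, whose left and right adjoints are those of $b$ with left and right interchanged.)

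The argument is a routine diagram chase once Proposition~\ref{AZ:PR:dfn-of-phi} is in hand, and the only genuine input is the Morita-theoretic identification making $\Phi_R$ bijective. The one place that needs a little care is the bookkeeping in step (ii): verifying that the swapped double module $K'$ is still a double $R$-progenerator --- equivalently, that the transpose of the $(R^\op,R)$-progenerator $K$ is an $(R^\op,R)$-progenerator --- and that passing from $K$ to $K'$ interchanges the duality functors $[0]$ and $[1]$. It is exactly this symmetry that promotes (i) to the ``mutually inverse'' statement in (ii), which in turn furnishes the reflection of isomorphisms used in (iii).
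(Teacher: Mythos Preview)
Your proof is correct and follows essentially the same route as the paper: reduce (i) to $\Phi_R$ via additivity and direct-summand arguments, deduce (ii) from (i) together with the symmetric statement, and read off (iii) from the identity $(\rAd{b})^{[0]}\circ\Phi_M=\lAd{b}$. The only cosmetic difference is in (ii): the paper defines $\Psi_M:M\to M^{[0][1]}$ directly by $(\Psi_Mx)f=f(x)$ and reruns the argument of (i), whereas you obtain the same natural isomorphism by passing to the swapped double module $K'$ and invoking (i) for $K'$; since $\Psi$ for $K$ is literally $\Phi$ for $K'$, these are the same argument in different packaging. Likewise, for the converse in (iii) the paper uses the companion identity $(\lAd{b})^{[1]}\circ\Psi_M=\rAd{b}$ rather than the ``$[0]$ reflects isomorphisms'' step, but again these are interchangeable once (ii) is in hand.
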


    \begin{proof}
        (i) Since $K$ is a double $R$-progenerator,
        every automorphism of $K_1$ is of the form $k\mapsto k\mul{0}r$ for a unique $r\in R$.
        Hence, $\Phi_R$ is an isomorphism by Proposition~\ref{AZ:PR:dfn-of-phi}(iv). By the
        additivity of $\Phi$ (Proposition~\ref{AZ:PR:dfn-of-phi}(ii)), $\Phi_{R^n}$ is also an isomorphism for all $n$.
        As every $M\in\rproj{R}$ admits an $M'\in\rproj{R}$ with $M\oplus M'\cong R^n$, it follows that $\Phi_M$ is an isomorphism
        (since $\Phi_M\oplus\Phi_{M'}=\Phi_{R^n}$).

        (ii)
        Recall that $R^{[i]}\cong K_i\in\rproj{R}$. Since the functor $[i]$ is additive, it
        follows that $[i]$ takes $\rproj{R}$ into itself.
        By (i), $\Phi:\id_{\rproj{R}}\to [1][0]$ is a natural \emph{isomorphism}.
        For all $M\in\rMod{R}$, define $\Psi_M:M\to M^{[0][1]}$ by $(\Psi_M x)f=f(x)$, where $f\in M^{[0]}$ and $x\in M$.
        Then a similar argument  shows that $\Psi:\id_{\rproj{R}}\to [0][1]$ is a natural isomorphism.
        It follows that $[0]$ and $[1]$ are mutual inverses.

        (iii) By Proposition~\ref{AZ:PR:dfn-of-phi}(iii),
        $(\rAd{b})^{[0]}\circ\Phi_M=\lAd{b}$, so by (i), that $\rAd{b}$ is bijective implies $\lAd{b}$ is bijective.
        The other direction follows by symmetry. (Use the identity $(\lAd{b})^{[1]}\circ\Psi_M=\rAd{b}$.)
    \end{proof}

    \begin{prp}\label{AZ:PR:being-mor-eq-to-ring-with-anti-auto}
        The ring $R$ is Morita equivalent over $C$ to a (necessarily central) $C$-algebra with an anti-automorphism
        of type $\sigma\in\Aut(C)$ if and only if there exists a regular bilinear
        space $(M,b,K)$ such that $M$ is an $R$-progenerator and $K$ is a double $R$-progenerator
        of type $\sigma$.
    \end{prp}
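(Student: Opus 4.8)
The plan is to prove the two implications by passing back and forth between ``an $R$-progenerator equipped with an anti-automorphism of its endomorphism ring'' and ``a regular bilinear space'', using Theorem~\ref{AZ:TH:correspondence} together with Lemma~\ref{AZ:LM:double-prog-factory}, while keeping careful track of types at every step.

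For the direction ($\Leftarrow$): suppose $(M,b,K)$ is a regular bilinear space with $M$ an $R$-progenerator and $K$ a double $R$-progenerator of type $\sigma$. Set $S=\End_R(M)$, a $C$-algebra. Since $M_R$ is an $R$-progenerator it is an $(S,R)$-progenerator, and a direct check with the $C$-algebra structure of $S$ (recall $(fc)m=(fm)c$) shows this progenerator has type $\id_C$; hence $S$ is a central $C$-algebra and $S\CMoreq{C}R$. As $b$ is regular it carries a corresponding anti-automorphism $\alpha$ of $S$, so it remains only to identify the type of $\alpha$. This is the one point that needs care, since the type hypothesis is attached to $K$ while the conclusion concerns $\alpha$. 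I would bridge the gap through uniqueness in Theorem~\ref{AZ:TH:correspondence}: because $b_\alpha$ also induces $\alpha$, the form $b$ is similar to $b_\alpha$, hence $K\cong K_\alpha$ as double $R$-modules and the two have the same type; and by Lemma~\ref{AZ:LM:double-prog-factory} the type of $K_\alpha$ is the type of $\alpha$. Thus the type of $\alpha$ equals the type of $K_\alpha$, which equals the type of $K$, which is $\sigma$. So $S$ is a central $C$-algebra, Morita equivalent to $R$ over $C$, carrying an anti-automorphism of type $\sigma$.

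For the direction ($\Rightarrow$): suppose $R\CMoreq{C}S$ for some central $C$-algebra $S$ with an anti-automorphism $\alpha$ of type $\sigma$. By the definition of $\CMoreq{C}$ there is an $(S,R)$-progenerator $P$ of type $\id_C$; put $M=P_R$, which is an $R$-progenerator. Because $P$ has type $\id_C$, the canonical isomorphism $S\cong\End_R(M)$ is one of $C$-algebras, so $\alpha$ transports to an anti-automorphism of $\End_R(M)$ of the same type $\sigma$. Now Theorem~\ref{AZ:TH:correspondence} (applicable since $M$ is an $R$-generator) furnishes the regular bilinear form $b_\alpha$ on $M$ with values in $K_\alpha$ and corresponding anti-automorphism $\alpha$; by Lemma~\ref{AZ:LM:double-prog-factory}, $K_\alpha$ is a double $R$-progenerator whose type is that of $\alpha$, namely $\sigma$. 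Then $(M,b_\alpha,K_\alpha)$ is the required regular bilinear space.

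Most of this is bookkeeping: that $M_R$, viewed as an $(S,R)$-progenerator, has type $\id_C$, and that transporting $\alpha$ along an isomorphism of $C$-algebras preserves its type, both follow straight from the definitions. The only genuinely delicate step is the type-matching in ($\Leftarrow$), where one must route through the similarity $b\sim b_\alpha$ to see that the type recorded by $K$ coincides with the type of $\alpha$; I expect that to be the main (though still modest) obstacle, everything else being a routine application of Morita theory and the constructions of section~\ref{section:forms}.
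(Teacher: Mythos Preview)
Your proof is correct and follows essentially the same strategy as the paper: both directions go through Theorem~\ref{AZ:TH:correspondence} and Lemma~\ref{AZ:LM:double-prog-factory} exactly as you do. The only notable difference is in the type-matching step of ($\Leftarrow$): the paper verifies directly that $\alpha$ has type $\sigma$ via the one-line computation
\[
b(x,y)\mul{1}\sigma(c)=b(x,y)\mul{0}c=b(cx,y)=b(x,c^\alpha y)=b(x,y)\mul{1}c^\alpha
\]
and faithfulness of $K_1$, whereas you route through the uniqueness clause of Theorem~\ref{AZ:TH:correspondence} to get $K\cong K_\alpha$ and then read off the type from Lemma~\ref{AZ:LM:double-prog-factory}. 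Both arguments are valid; the paper's is slightly more self-contained, while yours reuses machinery already in place and avoids any new calculation.
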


    \begin{proof}
        If $(M,b,K)$ is a regular bilinear space,
        we may associated to
        $b$ its corresponding anti-automorphism
        $\alpha:\End_R(M)\to\End_R(M)$. Since $M$ is an $R$-progenerator, $\End_R(M)\CMoreq{C} R$ (recall that we view $\End_R(M)$
        as a $C$-algebra). In addition, for all $x,y\in M$ and $c\in C$,
        we have $b(x,y)\mul{1}\sigma(c)=b(x,y)\mul{0}c=b(cx,y)=b(x,c^\alpha y)=b(x,y)\mul{1}c^\alpha$, so since
        $K_1$ is faithful (because it is a progenerator), the type of $\alpha$
        is $\sigma$.

        Conversely, let $M$ be an $R$-progenerator and assume $\End_R(M)$ has an anti-automorphism of type $\sigma$.
        Then by Theorem~\ref{AZ:TH:correspondence}, $b_\alpha:M\times M\to K_\alpha$ is a regular
        bilinear form and $K_\alpha$ is a double $R$-progenerator of type  $\sigma$ by Lemma~\ref{AZ:LM:double-prog-factory}.
    \end{proof}

    \begin{lem}\label{AZ:LM:u-defin}
        Let $K$ be a double $R$-module with involution $\theta$.
        For every $M\in\rMod{R}$, define $u_{\theta,M}:M^{[0]}\to M^{[1]}$
        by $u_{\theta,M}(f)=\theta\circ f$. Then $u_{\theta,M}$ is a natural isomorphism of right $R$-modules.
    \end{lem}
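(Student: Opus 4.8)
The plan is to verify by hand the four assertions packed into the statement: that $u_{\theta,M}$ is well defined (i.e.\ $\theta\circ f$ really lies in $M^{[1]}$), that it is additive and right $R$-linear, that it is bijective, and that the collection $\{u_{\theta,M}\}_M$ is natural in $M$. Nothing here goes beyond unwinding Definition~\ref{AZ:DF:double-module}, so the write-up is essentially a bookkeeping exercise in the two operations $\mul{0}$, $\mul{1}$ and the twist $i\mapsto 1-i$ in the involution axiom.

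First I would check well-definedness. Take $f\in M^{[0]}=\Hom_R(M,K_1)$, so $f(ma)=f(m)\mul{1}a$ for $m\in M$, $a\in R$. Applying $\theta$ and using the case $i=1$ of $(k\mul{i}a)^\theta=k^\theta\mul{1-i}a$ gives $(\theta\circ f)(ma)=\theta\bigl(f(m)\mul{1}a\bigr)=\theta(f(m))\mul{0}a$, which exhibits $\theta\circ f$ as an $R$-linear map $M\to K_0$, i.e.\ an element of $M^{[1]}$. Additivity of $u_{\theta,M}$ is immediate from additivity of $\theta$. For $R$-linearity, recall the module structures $(fr)(m)=(fm)\mul{0}r$ on $M^{[0]}$ and $(gr)(m)=(gm)\mul{1}r$ on $M^{[1]}$; then the case $i=0$ of the same identity yields $\bigl(\theta\circ(fr)\bigr)(m)=\theta\bigl((fm)\mul{0}r\bigr)=(fm)^\theta\mul{1}r=\bigl((\theta\circ f)r\bigr)(m)$, so $u_{\theta,M}(fr)=u_{\theta,M}(f)r$.

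For bijectivity the key input is $\theta\circ\theta=\id_K$: the symmetric version of the computation above (swapping the roles of $0$ and $1$) shows that $g\mapsto\theta\circ g$ also defines a map $M^{[1]}\to M^{[0]}$, and composing the two maps in either order replaces $\theta$ by $\theta\circ\theta=\id_K$, hence gives the identity; so $u_{\theta,M}$ is an isomorphism with this map as its inverse. Finally, for naturality, given $\phi\in\Hom_R(N,N')$ the dual map $\phi^{[i]}$ is precomposition by $\phi$, so $u_{\theta,N}\bigl(\phi^{[0]}f\bigr)=\theta\circ(f\circ\phi)=(\theta\circ f)\circ\phi=\phi^{[1]}\bigl(u_{\theta,N'}f\bigr)$ by associativity of composition, which is exactly the required commuting square; thus $u_\theta$ is a natural isomorphism $[0]\to[1]$.

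I do not expect a genuine obstacle: the content is routine. The only point that needs attention is tracking which of $\mul{0}$, $\mul{1}$ occurs at each step — in particular noticing that the twist in the involution axiom is precisely what converts the module structure of $M^{[0]}$ (built from $\mul{0}$) into that of $M^{[1]}$ (built from $\mul{1}$) — and remembering that invertibility of $u_{\theta,M}$ comes from $\theta^2=\id_K$ rather than from some separate argument.
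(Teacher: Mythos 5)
Your proof is correct and is exactly the computation the paper leaves implicit (its entire proof is ``By computation''). All four checks — well-definedness via the $i=1$ case of the involution axiom, $R$-linearity via the $i=0$ case, invertibility from $\theta^2=\id_K$, and naturality from associativity of composition — are carried out correctly.
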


    \begin{proof}
        By computation.
    \end{proof}

    The proof of following theorem demonstrates an idea that we wish to
    stress: One can construct involutions on
    rings that are Morita equivalent to $R$ by constructing symmetric general bilinear
    forms. Note that all possible involutions are obtained in this manner by Theorem~\ref{AZ:TH:correspondence}.
    (However, this
    fails if we limit ourselves to standard sesquilinear forms. Indeed, the base ring may not even
    have an anti-automorphism; see \cite[Exs.~2.4 \& 2.7]{Fi13A}, or section~\ref{section:no-inv}.)

    \begin{thm}\label{AZ:TH:being-mor-eq-to-ring-with-inv}
        The ring $R$ is Morita equivalent over $C$ to a $C$-algebra with an involution
        of type $\sigma\in\Aut(C)$ if and only if
        there exists a double $R$-progenerator of type $\sigma$ admitting an involution.
    \end{thm}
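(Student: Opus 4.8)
The plan is to reduce the statement to Proposition~\ref{AZ:PR:being-mor-eq-to-ring-with-anti-auto} together with Theorem~\ref{AZ:TH:correspondence}, using the double $R$-module structure as the bookkeeping device for "being an involution" rather than merely "being an anti-automorphism". For the forward direction, suppose $S$ is a $C$-algebra with an involution $\tau$ of type $\sigma$ and $P$ is an $(S,R)$-progenerator of type $\id_C$, so that $R\CMoreq{C}S$. Transport $\tau$ across the Morita equivalence: since $R$ is an $R$-progenerator over itself and $S\cong\End_R(M)$ for $M=P$ viewed as a right $R$-module (with $\End_R(M)=S$), the involution $\tau$ of $S=\End_R(M)$ is, by Theorem~\ref{AZ:TH:correspondence}, the corresponding anti-automorphism of a regular bilinear form $b_\tau:M\times M\to K_\tau$ which is $\theta_\tau$-symmetric for the involution $\theta_\tau$ of $K_\tau$ constructed explicitly in section~\ref{section:forms}. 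By Lemma~\ref{AZ:LM:double-prog-factory}, $K_\tau$ is a double $R$-progenerator whose type equals the type of $\tau$, namely $\sigma$. Thus $K_\tau$ is a double $R$-progenerator of type $\sigma$ admitting an involution, as required.

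For the converse, suppose $K$ is a double $R$-progenerator of type $\sigma$ admitting an involution $\theta$. The strategy is to manufacture a regular $\theta$-symmetric bilinear form landing in $K$ (or in a double module similar to $K$) on some $R$-progenerator $M$, and then invoke the "furthermore" clause of Theorem~\ref{AZ:TH:correspondence} to get the desired involution on $\End_R(M)\CMoreq{C}R$. The natural candidate is to build $M$ as a $K$-dual of a suitable module and to produce $b$ essentially as an evaluation pairing twisted by $\theta$. Concretely, one takes $M=R^{[0]}\oplus R^{[1]}$ or, more cleverly, one uses Lemma~\ref{AZ:LM:u-defin}: the isomorphism $u_{\theta,M}:M^{[0]}\to M^{[1]}$ lets one convert the canonical (hyperbolic-type) pairing between $M$ and its $1$-dual into a form valued in $K$, and then symmetrize it using $\theta$. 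Since $K$ is a double $R$-progenerator, Lemma~\ref{AZ:LM:right-reg-iff-left-reg} guarantees that right regularity suffices to conclude regularity, and the functors $[0],[1]$ restrict to mutual inverses on $\rproj{R}$, so the relevant adjoint maps are isomorphisms whenever they are so on $R$ itself. The involution $\theta$ on $K$ then forces the constructed form to be $\theta$-symmetric (possibly after restricting to a summand fixed by the induced involution), and its corresponding anti-automorphism is an involution by the remark preceding Theorem~\ref{AZ:TH:correspondence}. Verifying that the resulting $M$ is a \emph{generator} (not just f.g.\ projective) is where one must be slightly careful: one should arrange $M$ to contain a copy of $R^{[i]}\cong K_i$, which is a progenerator since $K$ is a double $R$-progenerator, so $M$ is automatically a progenerator.

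I expect the main obstacle to be the converse direction, specifically the construction of a \emph{$\theta$-symmetric} regular form from a bare double $R$-progenerator with involution. An arbitrary evaluation pairing on $M\oplus M^{[i]}$ is regular and hyperbolic but not obviously symmetric; the involution $\theta$ of $K$ is exactly the extra structure needed to symmetrize, but one must check that the symmetrization remains regular and that the ambient module is still a progenerator. The cleanest route is probably to take $M=K_1$ itself: then $M^{[1]}=\Hom_R((K_1)_0,K_0)\cong \Hom_R(K_1,K_0)$, and since $K$ is a double $R$-progenerator one has a canonical identification making the "trace-type" pairing $b(x,y)\in K$ defined by exploiting $R\cong\End_{R^\op}(K)$ (via the double-progenerator property) come out $\theta$-symmetric after composing with $\theta$. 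Once the form is in hand, everything else—regularity via Lemma~\ref{AZ:LM:right-reg-iff-left-reg}, the progenerator property of $M=K_1$, type computation as in Proposition~\ref{AZ:PR:being-mor-eq-to-ring-with-anti-auto}, and the passage to the involution on $\End_R(M)$—is routine bookkeeping that parallels the proof of Proposition~\ref{AZ:PR:being-mor-eq-to-ring-with-anti-auto}.
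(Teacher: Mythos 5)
Your two directions track the paper's proof almost exactly: the forward implication is precisely ``take $M=P_R$, apply Theorem~\ref{AZ:TH:correspondence} to get $b_\tau$ and $\theta_\tau$, and invoke Lemma~\ref{AZ:LM:double-prog-factory}'', and your main proposal for the converse --- the hyperbolic pairing on $M=P\oplus P^{[1]}$, made $\theta$-symmetric by inserting $\theta$ on one of the two evaluation terms --- is the paper's construction $b(x\oplus f,y\oplus g)=gx+(fy)^\theta$, with regularity read off from $\rAd{b}=\smallSMatII{0}{\id_{P^{[1]}}}{u_{\theta,P^{[1]}}\circ\Phi_P}{0}$ via Lemmas~\ref{AZ:LM:right-reg-iff-left-reg} and~\ref{AZ:LM:u-defin}. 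Two of your worries evaporate in this construction: there is no separate ``symmetrization'' step that could destroy regularity (the form is $\theta$-symmetric by inspection and its adjoint is anti-diagonal with invertible entries), and $M$ is a generator simply because it contains the progenerator $P$ as a summand.

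The one genuine problem is your closing suggestion that ``the cleanest route is probably to take $M=K_1$ itself.'' That route cannot work. Since $K$ is a double $R$-progenerator, $\End_R(K_1)\cong R^{\op}$, so a regular $\theta$-symmetric form on $K_1$ would produce an involution on $R^{\op}$, hence on $R$ itself --- a strictly stronger conclusion than the theorem claims. The paper goes to some lengths (Theorem~\ref{AZ:TH:involution-transfer} and its hypotheses, the remark on anti-structures at the end of section~\ref{section:transfer}) precisely because a double $R$-progenerator with involution does \emph{not} in general yield an involution on $R$; Example~\ref{GEN:EX:last-example} even produces a ring $B$ with no involution of type $\id_C$ such that some $\nMat{B}{2^k}$ has one, and for the intermediate ring $R=\nMat{B}{2^{k-1}}$ without involution there is (by the forward direction of this very theorem) a double $R$-progenerator of type $\id_C$ with involution. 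So no $\theta$-symmetric regular form on $K_1$ need exist, and the doubling $M=P\oplus P^{[1]}$ is not an optimization to be discarded but the essential point: it is exactly the hyperbolic/anti-structure trick that trades an involution on $K$ for an involution on a $2\times 2$ matrix ring over (a ring Morita equivalent to) $R$. Commit to your middle paragraph and drop the last one.
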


    \begin{proof}
        Assume $M_R$ is an $R$-progenerator such that $\End_R(M)$ has an involution of type
        $\sigma$. Then, as in the proof of Proposition~\ref{AZ:PR:being-mor-eq-to-ring-with-anti-auto},
        $K_\alpha$ is a double $R$-progenerator of type $\sigma$. Since $\alpha$ is an involution,
        $K_\alpha$ has an involution, namely $\theta_\alpha$.

        Conversely, assume $K$ is a double $R$-progenerator of type $\sigma$ with an involution $\theta$.
        Let $P$ be any right $R$-progenerator
        and let $M=P\oplus P^{[1]}$. Define $b:M\times M\to K$ by $b(x\oplus f, y\oplus g)=
        gx+(fy)^{\theta}$. The form
        $b$ is clearly $\theta$-symmetric.
        We claim that $b$ is regular. By Lemma~\ref{AZ:LM:right-reg-iff-left-reg}(iii) (or since
        $b$ is $\theta$-symmetric),
        it is enough to show only right regularity. Indeed, a straightforward computation shows that
        \[
        \rAd{b}=\SMatII{0}{\id_{P^{[1]}}}{u_{\theta,P^{[1]}}\circ\Phi_P}{0}\in \Hom_R(P\oplus P^{[1]},P^{[1]}\oplus P^{[1][1]})\ .
        \]
        As $\Phi_P$ and $u_{\theta,P^{[1]}}$
        are both bijective by Lemmas~\ref{AZ:LM:right-reg-iff-left-reg}(i) and~\ref{AZ:LM:u-defin}, respectively, so is $\rAd{b}$.
        Let $\alpha:\End_R(M)\to\End_R(M)$ be the anti-endomorphism
        associated with $b$. Then $\alpha$  is an involution (since $b$ is $\theta$-symmetric),
        and, as in the proof
        of Proposition~\ref{AZ:PR:being-mor-eq-to-ring-with-anti-auto},  $\alpha$ is of type $\sigma$.
    \end{proof}

    Using very similar ideas, one can show the following variation, which states when a particular
    ring $S$ that is Morita equivalent to $R$ over $C$ has an anti-automorphism or an involution of a given type.
    Compare with \cite[Th.\ 4.2]{Sa78}.

    \begin{prp}\label{AZ:PR:existence-of-anti-auto-crit}
        Let $M$ be a right $R$-progenerator. Then $\End_R(M)$ has an anti-automorphism (involution) if and only if
        there exists a double $R$-progenerator $K$ of type $\sigma$ (admitting an involution $\theta$)
        and a right regular ($\theta$-symmetric) bilinear form $b:M\times M\to K$.
    \end{prp}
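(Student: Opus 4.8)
The plan is to deduce both implications from the correspondence between regular bilinear forms and anti-automorphisms (Theorem~\ref{AZ:TH:correspondence}) together with the double-progenerator factory Lemma~\ref{AZ:LM:double-prog-factory}. The argument will run parallel to the proofs of Proposition~\ref{AZ:PR:being-mor-eq-to-ring-with-anti-auto} and Theorem~\ref{AZ:TH:being-mor-eq-to-ring-with-inv}, the only difference being that here the progenerator $M$ is held fixed, so no auxiliary module such as $P\oplus P^{[1]}$ needs to be built.

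For the ``only if'' direction I would start from an anti-automorphism $\alpha$ of $\End_R(M)$, say of type $\sigma$. Since $M$ is in particular an $R$-generator, Theorem~\ref{AZ:TH:correspondence} provides a regular (hence right regular) bilinear form $b_\alpha\colon M\times M\to K_\alpha$ whose corresponding anti-automorphism is $\alpha$, and Lemma~\ref{AZ:LM:double-prog-factory} shows that $K_\alpha$ is a double $R$-progenerator whose type equals that of $\alpha$, namely $\sigma$; thus $(M,b_\alpha,K_\alpha)$ is the required data. If $\alpha$ is moreover an involution, I would additionally invoke the facts from Section~\ref{section:forms} that $K_\alpha$ carries the involution $\theta_\alpha$ and that $b_\alpha$ is $\theta_\alpha$-symmetric, which settles the parenthetical statement.

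For the ``if'' direction I would take a double $R$-progenerator $K$ of type $\sigma$ together with a right regular bilinear form $b\colon M\times M\to K$. The first step is to note that $M\in\rproj{R}$ (being a progenerator), so Lemma~\ref{AZ:LM:right-reg-iff-left-reg}(iii) upgrades right regularity to two-sided regularity; $b$ then has a corresponding anti-automorphism $\alpha$ of $\End_R(M)$, as recalled just before Theorem~\ref{AZ:TH:correspondence}. To identify its type I would repeat the computation from Proposition~\ref{AZ:PR:being-mor-eq-to-ring-with-anti-auto}: for central $c$ one has $b(x,y)\mul{1}\sigma(c)=b(x,y)\mul{0}c=b(cx,y)=b(x,c^{\alpha}y)=b(x,y)\mul{1}c^{\alpha}$ for all $x,y\in M$, and since $K_1$ is faithful while the values of $b$ generate $K$ additively (which they do for the similar form $b_\alpha$, whose values span $K_\alpha$ by construction), this forces $\alpha$ to have type $\sigma$. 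Finally, if $K$ admits an involution $\theta$ and $b$ is $\theta$-symmetric, then $\alpha$ is an involution, again by Section~\ref{section:forms}.

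I do not expect a genuine obstacle: the proof is essentially bookkeeping, and the sole non-formal step is the passage from right regularity to two-sided regularity, which is exactly where the hypotheses ``$M$ a progenerator'' and ``$K$ a double progenerator'' enter, through Lemma~\ref{AZ:LM:right-reg-iff-left-reg}. The point that will demand the most care is the type computation in the converse, namely checking that the values of $b$ detect the $\mul{1}$-action of $C$ faithfully; this reduces to the explicit description of $b_\alpha$ and $K_\alpha$.
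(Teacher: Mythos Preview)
Your proposal is correct and is precisely the argument the paper has in mind: the paper omits the proof of this proposition, saying only that it follows ``using very similar ideas'' to Proposition~\ref{AZ:PR:being-mor-eq-to-ring-with-anti-auto} and Theorem~\ref{AZ:TH:being-mor-eq-to-ring-with-inv}, and you have correctly identified that the same ingredients (Theorem~\ref{AZ:TH:correspondence}, Lemma~\ref{AZ:LM:double-prog-factory}, Lemma~\ref{AZ:LM:right-reg-iff-left-reg}(iii)) suffice, with no need for the auxiliary $P\oplus P^{[1]}$ construction since $M$ is fixed.

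One minor comment on the type computation: your reduction via similarity to $b_\alpha$ is valid, but you can argue more directly. Since $\rAd{b}$ is onto $M^{[1]}=\Hom_R(M,K_0)$ and $M$ is a generator, the values $b(x,y)$ additively span $K$ (indeed, $\sum_{f\in\Hom_R(M,K_0)}\im(f)=K$ is one of the equivalent characterizations of $M$ being a generator); combined with faithfulness of $K_1$ this gives $c^\alpha=\sigma(c)$ immediately, without passing through the explicit model $K_\alpha$.
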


\section{Rings That Are Morita Equivalent to Their Opposites}
\label{section:Mor-eq-to-op}

    Let $R$ be a ring.
    In this section, we use  Proposition~\ref{AZ:PR:being-mor-eq-to-ring-with-anti-auto} to show
    that under certain finiteness assumptions,
    $R\Moreq R^\op$ implies that $R$ is Morita equivalent to a ring with an anti-automorphism (i.e.\ (C)$\derives$(B)).
    Whether this holds in general is still open.
    Henceforth,
    we  freely consider  $(R^\op,R)$-progenerators as double $R$-progenerators and vice versa (see section~\ref{section:double-prog}).

    \medskip

    We begin with two lemmas whose purpose is to show that the functors $[0]$ and $[1]$
    commute with certain scalar extensions.

    \begin{lem}\label{AZ:LM:one-dual-commutes-with-scalar-ext-II}
        Let $K$ be an $(R^\op,R)$-progenerator of
        type $\sigma\in\Aut(\Cent(R))$, let $C\subseteq \Cent(R)$
        be a subring fixed pointwise by $\sigma$, and let $D/C$ be a commutative ring extension.
        For every $P\in\rproj{R}$, let
        $P_D:= P\otimes_C D\in\rproj{R_D}$.
        Make $K_D$ into an $(R_D^\op,R_D)$-module as in Proposition~\ref{AZ:PR:scalar-ext-of-prog}.
        Then $K_D$ is an $(R_D^\op,R_D)$-progenerator and $(P^{[1]})_D\cong (P_D)^{[1]}$
        for all $P\in \rproj{R}$ (the functor $[1]$ is computed with respect to $K$ in the left hand side and with respect
        to $K_D$ in the
        right hand side).
    \end{lem}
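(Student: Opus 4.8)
The plan is to reduce everything to the already-established base-change behavior of Hom and of progenerators. First I would record that $K_D$ is an $(R_D^{\op},R_D)$-progenerator: this is exactly Proposition~\ref{AZ:PR:scalar-ext-of-prog} applied with $S=R^{\op}$ (so that $S^{\sigma}=R^{\op}$, since $\sigma$ fixes $C$ pointwise and $R^{\op}$ is viewed as a $C$-algebra via the restriction of $\sigma$ to $C$, which is trivial) and $D$ in the role of the extension ring. One must check that the $(R_D^{\op},R_D)$-bimodule structure obtained from Proposition~\ref{AZ:PR:scalar-ext-of-prog} coincides with the one described in the statement via the recipe $a^{\op}\cdot k\cdot b = k\mul{0}b\mul{1}a$ extended $D$-linearly; this is the ``routine verification'' already invoked in that proposition's proof, so I would dispatch it in one sentence. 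In particular $K_D$ is again a double $R_D$-module, so the functor $[1]$ relative to $K_D$ is defined on $\rproj{R_D}$.

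Next, the heart of the matter: identifying $(P^{[1]})_D$ with $(P_D)^{[1]}$. By definition $P^{[1]}=\Hom_R(P,K_0)$, so $(P^{[1]})_D = \Hom_R(P,K_0)\otimes_C D$. On the other side, $(P_D)^{[1]} = \Hom_{R_D}(P_D,(K_D)_0)$, and $(K_D)_0$ is just $K_0\otimes_C D$ with its evident right $R_D$-module structure (the subscript $0$ refers to the $\mul{0}$-action, which is the one twisted by $D$-scalars in the progenerator construction — I would spell this out). Now apply Proposition~\ref{AZ:PR:standard-hom-III} with $S=D$, $X=P$, $Y=K_0$: since $P_R$ is finitely generated projective, the natural map
\[
\Hom_R(P,K_0)\otimes_C D \longrightarrow \Hom_{R_D}(P_D,(K_0)_D)
\]
is an isomorphism. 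So it only remains to check that the right $R$-module structure on $\Hom_R(P,K_0)$ used to define the functor $[1]$ (namely $(fr)(p)=f(p)\mul{1}r$) is compatible, after tensoring with $D$, with the one on $\Hom_{R_D}(P_D,(K_D)_0)$ coming from the double $R_D$-module $K_D$ (namely $(gr')(x)=g(x)\mul{1}r'$ in $K_D$). This follows because the $\mul{1}$-action on $K_D$ restricts on $K_0\otimes D$ to $\mul{1}$ on the first factor, exactly matching the $[1]$-module structure, and the isomorphism of Proposition~\ref{AZ:PR:standard-hom-III} is $R$-linear and $D$-linear by construction; hence it is $R_D$-linear. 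That gives the claimed isomorphism $(P^{[1]})_D\cong (P_D)^{[1]}$, and naturality in $P$ is inherited from the naturality in Proposition~\ref{AZ:PR:standard-hom-III}.

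I expect the only real friction to be bookkeeping of which of the two $R$-actions ($\mul{0}$ versus $\mul{1}$) gets twisted by $D$ in the progenerator construction of Proposition~\ref{AZ:PR:scalar-ext-of-prog}, and consequently which dual functor ($[0]$ or $[1]$) commutes ``on the nose'' with base change. There is a genuine asymmetry: $[1]=\Hom_R(-,K_0)$ sees the $\mul{0}$-action on the target, which is precisely the action along which $K_D$ is not further twisted, so $[1]$ base-changes cleanly; the analogous statement for $[0]$ would involve $K_1$ and require tracking the $\sigma$-twist. Since the lemma as stated only asks about $[1]$, and since the hypothesis that $\sigma$ fixes $C$ pointwise makes $S^{\sigma}=R^{\op}$ as $C$-algebras, no twisting obstruction actually arises, and the proof is a two-step reduction: Proposition~\ref{AZ:PR:scalar-ext-of-prog} for the progenerator claim, Proposition~\ref{AZ:PR:standard-hom-III} for the duality claim, with a short module-structure check gluing them together.
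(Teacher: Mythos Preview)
Your proposal is correct and follows essentially the same approach as the paper: invoke Proposition~\ref{AZ:PR:scalar-ext-of-prog} (with $S=R^{\op}$, noting $\sigma|_C=\id_C$ so $(R^{\op})^\sigma_D=(R_D)^\op$) for the progenerator claim, and Proposition~\ref{AZ:PR:standard-hom-III} for the isomorphism $(P^{[1]})_D\cong (P_D)^{[1]}$. The paper's proof is just these two citations in one line; your additional bookkeeping about the $\mul{0}$/$\mul{1}$ actions is correct elaboration but not a different route.
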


    \begin{proof}
        That $K_D$ is an $(R_D^\op,R_D)$-progenerator follows from Proposition~\ref{AZ:PR:scalar-ext-of-prog} (observe
        that $(R^\op)_D^\sigma=(R^\op)_D=(R_D)^\op$ since $\sigma$ fixes $C$). That $(P^{[1]})_D\cong (P_D)^{[1]}$
        follows from Proposition~\ref{AZ:PR:standard-hom-III}.
    \end{proof}

    \begin{lem}\label{AZ:LM:one-dual-commutes-with-scalar-ext-I}
        Let $K$ be an $(R^\op,R)$-progenerator and let $N$ be the
        prime (resp.\ Jacobson) radical  of $R$. For all $P\in\rproj{R}$, define
        $\quo{P}:=P/PN\cong P\otimes_R (R/N)\in\rproj{R/N}$.
        Then $\End_{\quo{R}}(\quo{K})\cong\quo{R}^\op$,
        $\quo{K}$ is an $(\quo{R}^\op,\quo{R})$-progenerator and $\quo{P^{[1]}}\cong \quo{P}^{[1]}$
        (the functor $[1]$ is computed with respect to $K$ in the left hand side and with respect to $\quo{K}$ in the
        right hand side).
    \end{lem}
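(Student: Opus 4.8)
The plan is to reduce everything to Proposition~\ref{AZ:PR:one-dual-commutes-with-scalar-ext-I} and Proposition~\ref{AZ:PR:standard-hom-III}, thinking of $K$ as an $(R^\op,R)$-progenerator $P$ (in the notation of section~\ref{section:Morita}) with $S=R^\op$. First I would apply Proposition~\ref{AZ:PR:one-dual-commutes-with-scalar-ext-I} directly: with the roles $R \rightsquigarrow R$, $S \rightsquigarrow R^\op$, $P \rightsquigarrow K$, that proposition tells us $KN = MK$ where $M$ is the prime (resp.\ Jacobson) radical of $R^\op$; but the prime (resp.\ Jacobson) radical of $R^\op$ is just $N^\op$, so $KN = N^\op K$, i.e.\ $\quo{K} := K/KN$ carries a left $\quo{R}^\op = (R/N)^\op$-action and, again by that proposition, $\quo{K}$ is an $(\quo{R}^\op,\quo{R})$-progenerator. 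In particular $\End_{\quo{R}}(\quo{K}_{?})\cong \quo{R}^\op$ by the definition of progenerator (here one must check that the double-$\quo{R}$-module structure on $\quo{K}$ induced from $K$ is the one for which the $(\quo{R}^\op,\quo{R})$-bimodule structure agrees with the one coming from reduction; this is a routine unraveling of the definitions in section~\ref{section:double-prog}).

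Next I would handle the duals. For $P\in\rproj{R}$ we have $P^{[1]} = \Hom_R(P, K_0)$ by definition, and $\quo{P}^{[1]} = \Hom_{\quo{R}}(\quo{P}, \quo{K}_0)$ computed over $\quo{R}$. The natural map to compare them is the reduction map, or equivalently the base-change map from Proposition~\ref{AZ:PR:standard-hom-III} with $S := \quo{R} = R/N$: that proposition gives a natural homomorphism
\[
\Hom_R(P, K_0)\otimes_R \quo{R} ~\longrightarrow~ \Hom_{\quo{R}}(P\otimes_R\quo{R},\, K_0\otimes_R\quo{R})
\]
which is an isomorphism since $P_R$ is finitely generated projective. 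The left-hand side is $\quo{P^{[1]}}$ by definition of $\quo{(-)}$; the right-hand side is $\Hom_{\quo{R}}(\quo{P}, \quo{K_0})$, and since $\quo{K_0} = K_0\otimes_R\quo{R} = K_0/K_0N$ is exactly $\quo{K}$ viewed via $\mul{0}$ (this uses $KN = K_0 N$, which holds because $\mul{0}$-multiplication by elements of $R$ is how $N$ acts on $K$ as a right $R$-module in one slot — here one should be slightly careful and note $KN$ as defined via the $(R^\op,R)$-structure is $K\mul{0}N$), we get $\Hom_{\quo{R}}(\quo{P},\quo{K}_0) = \quo{P}^{[1]}$. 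Composing yields the desired isomorphism $\quo{P^{[1]}}\cong\quo{P}^{[1]}$, and naturality in $P$ is inherited from Proposition~\ref{AZ:PR:standard-hom-III}.

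The main obstacle is bookkeeping rather than depth: one must be careful that the two natural reductions of $K$ — reducing the right $R$-module $K_0$ modulo $N$ on one hand, and forming $K/KN$ as an $(R^\op,R)$-bimodule on the other — genuinely coincide as double $\quo{R}$-modules, and that the identification $KN = N^\op K$ matches the $M = \Jac(S)$ (resp.\ prime radical) appearing in Proposition~\ref{AZ:PR:one-dual-commutes-with-scalar-ext-I}. The key point making this work is that the prime radical and the Jacobson radical are both invariant under the opposite-ring construction, so that the ideal $J \idealof S = R^\op$ produced by that proposition is forced to be $N^\op$, and the "furthermore" clause there hands us the progenerator statement for free. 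Once these identifications are pinned down, the isomorphism of duals is immediate from Proposition~\ref{AZ:PR:standard-hom-III} and requires no further computation.
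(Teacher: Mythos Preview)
Your overall strategy is right and matches the paper's: invoke Proposition~\ref{AZ:PR:one-dual-commutes-with-scalar-ext-I} with $S=R^\op$ to get $KN=N^\op K$ and the progenerator statement, then establish a base-change isomorphism for $\Hom_R(P,K_0)$. The bookkeeping you flag (matching the two reductions of $K$, identifying the radical of $R^\op$ with $N^\op$) is handled correctly.

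There is, however, a genuine gap in your citation of Proposition~\ref{AZ:PR:standard-hom-III}. That proposition concerns scalar extension along $C\to S$ for a \emph{commutative} $C$ acting centrally: the tensor products there are over $C$, and the conclusion is $\Hom_R(X,Y)\otimes_C S\cong\Hom_{R\otimes_C S}(X\otimes_C S,Y\otimes_C S)$. It says nothing about tensoring over $R$ along the quotient map $R\to R/N$, which is what you need. Taking $S=R/N$ in that proposition would produce $R\otimes_C(R/N)$, not $R/N$.

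The paper fills this gap by proving the required base-change identity directly: it shows $P^{[1]}N=\Hom_R(P,(KN)_0)$ by observing the obvious inclusion and then, since both sides are additive in $P$, reducing to the case $P=R_R$. From there,
\[
\quo{P^{[1]}}=\frac{\Hom_R(P,K_0)}{\Hom_R(P,(KN)_0)}\cong \Hom_R(P,\quo{K}_0)\cong \Hom_{\quo{R}}(\quo{P},\quo{K}_0)=\quo{P}^{[1]},
\]
where the first isomorphism uses projectivity of $P$ and the second is the standard adjunction for modules annihilated by $N$. Your one-shot base-change map is morally this same computation, but you must either prove it by the same additivity-in-$P$ argument or cite a correct source; Proposition~\ref{AZ:PR:standard-hom-III} is not it.
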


    \begin{proof}
        By Proposition~\ref{AZ:PR:one-dual-commutes-with-scalar-ext-I}, $KN=N^\op K$ and $\quo{K}$
        is an $(\quo{R}^\op,\quo{R})$-progenerator.
        Let $P\in\rproj{R}$.
        We claim that $P^{[1]}N=\Hom_R(P,(KN)_0)$ (we consider $KN$ as a double $R$-module here).
        Indeed, by definition, $P^{[1]}N=\Hom_R(P,K_0)N\subseteq \Hom_R(P,(K\mul{1} N)_0)=\Hom_R(P,(N^\op K)_0)=\Hom_R(P,(KN)_0)$,
        so there is a natural inclusion $P^{[1]}N\subseteq \Hom_R(P,(KN)_0)$. As this inclusion is additive in $P$,
        it is enough to verify the equality for $P=R_R$, which is routine. Now, we get
        \[
        \quo{P^{[1]}}=\frac{\Hom_R(P,K_0)}{P^{[1]}N}=
        \frac{\Hom_R(P,K_0)}{\Hom_R(P,(KN)_0)}\cong \Hom_R(P,\quo{K}_0)\cong \Hom_{\quo{R}}(\quo{P},\quo{K}_0)=\quo{P}^{[1]},
        \]
        as required.
    \end{proof}

    Recall that a ring $R$ is called \emph{$\Q$-finite} if
    both $\dim_{\Q}(R\otimes_{\Z}\Q)$ and the cardinality of $\ker(R\to R\otimes_{\Z}\Q)$ are finite.
    The following lemma   is inspired by \cite[Pr.\ 18.2]{Ba64}.

    \begin{lem}\label{AZ:LM:Q-finite-rings}
        Assume $R$ is $\Q$-finite and let $N$ be the prime radical of $R$. Then $N$ is nilpotent
        and $R/N\cong T\times \Lmb$ where $T$ is a semisimple finite ring
        and  $\Lmb$ is a subring of a semisimple $\Q$-algebra $E$ such that $\Lmb\Q=E$.
    \end{lem}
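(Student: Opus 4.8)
The plan is to analyze the structure of $R$ in two stages: first modulo the prime radical, and then to split the resulting semiprime $\Q$-finite ring into a finite part and a part that becomes semisimple after tensoring with $\Q$. Write $A = R\otimes_\Z\Q$, a finite-dimensional $\Q$-algebra by hypothesis, and let $I = \ker(R\to A)$, a finite ideal of $R$ by hypothesis; note $I$ is precisely the torsion subgroup of $(R,+)$, so $R/I$ embeds in $A$.

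First I would handle nilpotence of $N$. The image $\overline{N}$ of $N$ in $R/I \subseteq A$ generates a nil (in fact, since everything is finite-dimensional, nilpotent) ideal of $A$: indeed $A$ is artinian, so its prime radical $\Jac(A)$ is nilpotent, and the nil ideal generated by $\overline N$ sits inside $\Jac(A)$. Hence $N^m \subseteq I$ for some $m$. Since $I$ is a finite ring (without unity), its prime radical is nilpotent, and as $N^m$ is a nil ideal contained in $I$ it is nilpotent; combining, $N$ itself is nilpotent. One must check $N$ really is the full prime radical — equivalently that $R/N$ is semiprime — but this is automatic from the definition of $N$ as the prime radical.

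Next, passing to $\overline R := R/N$, which is semiprime and still $\Q$-finite, I would produce the decomposition $\overline R \cong T\times\Lmb$. Let $J = \ker(\overline R \to \overline R\otimes_\Z\Q)$; this is the torsion ideal of $\overline R$, it is finite, and it is an ideal of a semiprime ring, hence itself semiprime — but a finite semiprime ring is semisimple artinian, so $J$ is a semisimple finite ring. A finite semisimple ring has an identity element $e$, which is then a central idempotent of $\overline R$ (central because $J$ is an ideal and $e$ generates it), giving a ring decomposition $\overline R = eT \times (1-e)\overline R$ with $T := e\overline R = J$ finite semisimple. Set $\Lmb := (1-e)\overline R$. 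Then $\Lmb$ is torsion-free (its torsion lies in $J\cap\Lmb = 0$), so $\Lmb$ embeds in $E := \Lmb\otimes_\Z\Q$. This $E$ is a finite-dimensional $\Q$-algebra, it is semiprime (as a localization of the semiprime $\Lmb$ — nilpotent ideals of $E$ contract to nilpotent ideals of $\Lmb$), hence semisimple artinian; and $\Lmb\Q = E$ by construction. This gives all the asserted properties.

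The main obstacle I anticipate is the bookkeeping around nilpotence versus nil-ness and making sure the prime radical of $R$ (rather than some a priori smaller or larger ideal) is what descends correctly through the maps $R\to R/I\hookrightarrow A$ and $\overline R\to E$; in particular one needs that contracting $\Jac(A)$ along $R\to A$ lands inside $N$ and conversely that $N$ maps into $\Jac(A)$, using that a prime ideal of $R$ containing $I$ corresponds to a prime ideal of $R/I\subseteq A$ and that $A$ is artinian so its prime radical equals its Jacobson radical. Once that correspondence is pinned down, the finiteness of $I$ (resp.\ $J$) does the rest, since finite semiprime rings are semisimple and finite rings have nilpotent prime radical.
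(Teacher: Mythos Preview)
Your argument is correct and in fact a bit cleaner than the paper's. The paper proceeds by successively quotienting $R$ by nilpotent pieces (first $\Jac(T)$ for $T$ the torsion ideal, then $\Jac(E)\cap\Lambda$), checking at each stage that what is removed lies in $N$, and only at the very end verifying that the prime radical of the resulting $\Lambda$ vanishes---for which it invokes an external noetherianity result. You instead establish nilpotence of $N$ up front and then work entirely inside the semiprime ring $R/N$, using that two-sided ideals and direct factors of semiprime rings remain semiprime and that localization at $\Z\setminus\{0\}$ preserves semiprimeness; this avoids the external citation and is conceptually tidier, at the cost of a slightly less explicit description of which nilpotent pieces make up $N$.

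Two small points you should tighten. First, the claim that an ideal $J$ of a semiprime ring $\overline{R}$ is itself a semiprime rng is true but not immediate: if $K\idealof J$ has $K^2=0$ and $a\in K$, then $aJa\subseteq K\cdot K=0$, so $a(\overline{R}a\overline{R})a\subseteq aJa=0$, giving $(\overline{R}a\overline{R})^3=0$ and hence $a=0$. Second, ``$N^m$ is a nil ideal contained in the finite set $I$, hence nilpotent'' deserves justification; the cleanest route is to note that the descending chain $N^m\supseteq N^{2m}\supseteq\cdots$ of $R$-ideals lies in the finite set $I$ and therefore stabilizes, then apply Nakayama (each $N^{km}$ is a finite, hence finitely generated, $R$-module and $N\subseteq\Jac(R)$).
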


    \begin{proof}
        Let $T=\ker(R\to R\otimes_{\Z}\Q)$. Then $T$ is an ideal of $R$.
        Consider $T$ as a non-unital ring and let $J=\Jac(T)$ (i.e.\ $J$ is the intersection
        of the kernels of all right $T$-module morphisms
        $T\to M$ where $M$ is a simple right $T$-module  with $MT=M$). Arguing as in \cite[Pr.\ 18.2]{Ba64},
        we see that $J$ is also an $R$-ideal. Since $J$ and all its submodules are finite (as sets), $J^n=0$ for some $n$
        (because $MJ\subseteq N$ for any right $T$-module $M$ and any  maximal submodule $N\leq M$).
        In particular, $J\subseteq N$.

        Replacing $R$ with $R/J$, we may assume $J=0$.
        Now, $T$ is semisimple and of finite length, hence it has a unit $e$. As $er,re\in T$ for all $r\in R$,
        we see that $er=ere=re$. Thus, $e\in\Cent(R)$ and $R\cong T\times (1-e)R$. As $\Lmb:=(1-e)R$ is torsion-free,
        it is a subring of $E:=\Lmb\otimes_{\Z}\Q$, which is a f.d.\ $\Q$-algebra by assumption.

        Let $I=\Jac(E)\cap\Lmb$.
        Then $I$ is nilpotent, hence $0\times I\subseteq N$.
        Replacing $R$ by $R/(0\times I)$, we may assume $E$ semisimple.
        We are thus finished if we show that the prime radical of $\Lmb$, denoted $N'$, is $0$ (because then $N=0\times N'=0$).
        Indeed, by \cite[Th.\ 2.5]{BayKeaWil87}, $\Lmb$ is noetherian (here we need $E$ to be semisimple).
        Thus, $N'$ is nil, hence so is $N'\Q\idealof E$. But $E$ is semisimple, so we must have $N'\subseteq N'\Q=0$.
    \end{proof}

    \begin{thm}\label{AZ:TH:R-mor-eq-to-R-op}
        Assume $R$ is semilocal or $\Q$-finite and let $K$ be an $(R^\op,R)$-progenerator.
        Then for every $P\in\rproj{R}$ there is $n\in\N$ such that $P\cong P^{ [1]^n}:=P^{[1][1]\dots[1]}$ ($n$ times).
        When $R$ is semilocal, $n$ is independent of $P$.
    \end{thm}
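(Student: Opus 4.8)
The plan is to reduce the problem to the two structure theorems available for semilocal and $\Q$-finite rings, and to track the behavior of the dual functor $[1]$ under the relevant quotients. The key observation is that the functor $[1]\colon\rproj{R}\to\rproj{R}$ is an equivalence (being given by an $(R^\op,R)$-progenerator via $\Hom_R(-,K_0)$, with inverse $[0]$ by Lemma~\ref{AZ:LM:right-reg-iff-left-reg}(ii)), so $P\mapsto P^{[1]^2}$ is an auto-equivalence of $\rproj{R}$, i.e.\ an automorphism of the monoid of isomorphism classes of f.g.\ projectives. What we must show is that some power of this automorphism — a \emph{uniform} power in the semilocal case — fixes every class. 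The natural strategy is: pass to $\quo{R}=R/N$ where $N$ is the prime (resp.\ Jacobson) radical; use Lemma~\ref{AZ:LM:one-dual-commutes-with-scalar-ext-I} to see that $[1]$ commutes with this quotient, so that $\quo{P^{[1]^n}}\cong\quo{P}^{[1]^n}$; prove the statement over $\quo{R}$, where the ring is (close to) semisimple and projectives are classified combinatorially; and finally lift back, using that idempotents lift modulo $N$ and that for $P,P'\in\rproj R$ one has $P\cong P'$ iff $\quo P\cong\quo{P'}$ (this holds modulo the Jacobson radical always, and in the $\Q$-finite case one arranges $N$ to be the Jacobson radical after the reductions of Lemma~\ref{AZ:LM:Q-finite-rings}, or argues directly).

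First I would handle the semilocal case. Here $\quo R=R/\Jac(R)$ is semisimple, say $\quo R\cong\prod_{i=1}^t \nMat{D_i}{n_i}$ with $D_i$ division rings, and the isomorphism classes of f.g.\ projective $\quo R$-modules form the free commutative monoid $\N^t$ on the simple modules $S_1,\dots,S_t$. The auto-equivalence $[1]^2$ permutes the simples, inducing a permutation $\pi\in\Sym(t)$; taking $n=2\cdot\ord(\pi)$ (or $n=\mathrm{lcm}$ of the cycle lengths, doubled) we get $\quo P^{[1]^n}\cong\quo P$ for \emph{every} $\quo P$, with $n$ depending only on $R$, not on $P$. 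Since idempotents lift modulo $\Jac(R)$ and f.g.\ projectives over a semilocal ring are determined up to isomorphism by their reductions mod the radical (\cite[Th.~18.2 / §19]{La99}-type fact — or just: a projective cover argument), we conclude $P\cong P^{[1]^n}$ for all $P\in\rproj R$, with $n$ uniform. This disposes of the semilocal case and its uniformity clause.

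For the $\Q$-finite case I would invoke Lemma~\ref{AZ:LM:Q-finite-rings}: after replacing $R$ by a suitable quotient by a nilpotent ideal (which changes neither $\rproj{R}$ up to the equivalence we need, nor the validity of the conclusion, since such reductions are tracked by Lemma~\ref{AZ:LM:one-dual-commutes-with-scalar-ext-I}), we reduce to $R/N\cong T\times\Lmb$ with $T$ a finite semisimple ring and $\Lmb$ an order in a semisimple $\Q$-algebra $E$ with $\Lmb\Q=E$. The progenerator $K$ splits accordingly (a product decomposition of the ring splits any progenerator), so it suffices to treat each factor. On $T$, finite semisimple, the semilocal argument above applies verbatim and gives a uniform exponent. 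On $\Lmb$: here I would tensor up with $\Q$ using Lemma~\ref{AZ:LM:one-dual-commutes-with-scalar-ext-II} (with $C=\Z$, $D=\Q$, and $\sigma=\id$ since $\sigma$ must fix $\Z$), so that $[1]$ commutes with $-\otimes_\Z\Q$; over the semisimple $\Q$-algebra $E$ the simples are again permuted by $[1]^2$, so a power of $[1]^2$ fixes every $P\otimes\Q$. To descend from $E$ to the order $\Lmb$ one cannot expect a uniform $n$ (the class group of $\Lmb$ interferes), but one only needs an $n$ depending on $P$: for a single $P$, the iterates $P^{[1]^{2k}}$ all have the same genus (they become isomorphic over $\Q$, hence over each localization $\Lmb_\frakp$ away from finitely many primes, and over the finitely many bad primes one is in a semilocal situation where periodicity holds), and since $\Lmb$ is noetherian of finite rank the genus of $P$ contains only finitely many isomorphism classes (Jordan–Zassenhaus), so by pigeonhole $P^{[1]^{2k}}\cong P^{[1]^{2\ell}}$ for some $k<\ell$; applying the inverse equivalence $[0]^{2k}$ gives $P\cong P^{[1]^{2(\ell-k)}}$.

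The main obstacle is exactly this last descent in the $\Q$-finite case: controlling the iterates $P^{[1]^n}$ over the order $\Lmb$ rather than over the semisimple algebra $E$. The clean way to package it is the Jordan–Zassenhaus finiteness theorem — finitely many isomorphism classes of $\Lmb$-lattices in a fixed $E$-module — combined with the fact that all the $P^{[1]^n}$ lie in one such $E$-module (a fixed $[1]^2$-orbit in $E$-$\mathrm{mod}$), so the sequence of classes is eventually periodic and a pigeonhole plus the invertibility of $[1]$ finishes it. Everything else is bookkeeping: checking that the product and nilpotent-quotient reductions are compatible with $K$ and with $[1]$ (Lemmas~\ref{AZ:LM:one-dual-commutes-with-scalar-ext-I} and~\ref{AZ:LM:one-dual-commutes-with-scalar-ext-II}), and that in the semilocal case reductions mod the radical detect isomorphism of projectives.
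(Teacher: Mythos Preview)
Your proposal is correct and follows essentially the same route as the paper: reduce modulo the (prime or Jacobson) radical via Lemma~\ref{AZ:LM:one-dual-commutes-with-scalar-ext-I} and the projective-cover argument, observe that $[1]$ permutes the simples over the semisimple quotient, and in the $\Q$-finite case invoke Jordan--Zassenhaus-type finiteness (the paper cites \cite[Th.~2.8]{BayKeaWil87}, phrasing it via the length invariant $j(P)=\mathrm{length}(P\otimes S)$ rather than genera) to force the $[1]$-orbit of each $P$ to be finite. Two minor points: the iterates $P^{[1]^{2k}}$ need not lie in a \emph{single} genus over $\Lambda$ but only in a finite $[1]$-orbit of genera (as you yourself correct a line later), and there is no need to pass to $[1]^2$, since $[1]$ is already an auto-equivalence of $\rproj{R}$ by Lemma~\ref{AZ:LM:right-reg-iff-left-reg}(ii).
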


    \begin{proof}
        Let $N$ denote the Jacobson radical of $R$ in case $R$ is semilocal
        and the prime radical of $R$ otherwise. In the latter case, $N$
        is nilpotent by Lemma~\ref{AZ:LM:Q-finite-rings}, so $N\subseteq\Jac(R)$ in both cases.

        Using the notation of Lemma~\ref{AZ:LM:one-dual-commutes-with-scalar-ext-I}, observe
        that every $P\in\rproj{R}$ is the projective cover of $\quo{P}=P/PN$.
        As projective covers are
        unique up to isomorphism, we have $P\cong Q$ $\iff$ $\quo{P}\cong \quo{Q}$ for all $P,Q\in\rproj{R}$.
        Therefore, using Lemma~\ref{AZ:LM:one-dual-commutes-with-scalar-ext-I}, we may assume $N=0$. Thus,
        $R$ is semisimple or $R=T\times \Lmb$ with $\Lmb\otimes_\Z\Q$ semisimple  and $T$
        finite and semisimple.

        Assume $R$ is semisimple and let $V_1,\dots,V_t$ be a complete
        set of simple $R$-modules up to isomorphism. By Lemma~\ref{AZ:LM:right-reg-iff-left-reg}(ii), $[1]$ permutes isomorphism
        classes of f.g.\ projective $R$-modules,
        and since $[1]$ is additive,  it permutes the indecomposable projective modules,
        namely, $V_1,\dots,V_t$. Therefore, there
        is $n\in\N$ (say, $n=t!$) such that $V_i^{[1]^n}\cong V_i$ for all $i$. As any $P\in\rproj{R}$
        is a direct sum of simple modules, we get $P^{[1]^n}\cong P$. We also record that $\mathrm{length}(P^{[1]})=\mathrm{length}(P)$.

        Now assume $R=T\times \Lmb$  as above, and set $E:=\Lmb\otimes_{\Z}\Q$ and $S:=T\times E$.
        Let $\sigma\in\Aut(\Cent(R))$ be the type of $K$ and let
        $e=(1_T,0)\in R$. Then $e$ is the maximal torsion idempotent in $\Cent(R)$, hence $\sigma(e)=e$.
        Define $C:=\Z e+\Z(1-e)$ and $D:=\Z e+\Q(1-e)+\subseteq T\times E=S$. Then $\sigma|_C=\id_C$
        and $S\cong R\otimes_C D$. Let $K'=K\otimes_CD$. Then by
        Lemma~\ref{AZ:LM:one-dual-commutes-with-scalar-ext-II}, $K'$ is an $(S^\op,S)$-progenerator
        and $(P_S)^{[1]}\cong (P^{[1]})_S$ for all $P\in\rproj{R}$ (where $P_S:=P\otimes_R S$).

        For every $P\in\rproj{R}$, define $j(P)=\mathrm{length}(P_S)$. Since $S$ is semisimple, the previous paragraphs
        imply that $j(P^{[1]})=j(P)$.
        Therefore, we are done if we show that for all $m\in\N$ there are finitely many isomorphism classes of modules $P$
        with $j(P)=m$ (for this  implies that action of $[1]$ have finite orbits on $\rproj{R}/\!\cong$, which yields the theorem).
        Indeed, thanks to \cite[Th.\ 2.8]{BayKeaWil87}, up to isomorphism,
        there are finitely many $\Lambda$-modules of any given $\Z$-rank, and this is easily seen to imply that there are
        finitely many isomorphism classes of modules $P\in\rproj{R}$ with $P_S$ of a given length, as required.
    \end{proof}

    \begin{remark}
        It is also possible to prove the semilocal case of
        Theorem~\ref{AZ:TH:R-mor-eq-to-R-op} by using a result of Facchini and Herbera \cite[Cor.\ 2.13]{FacHer00} stating that
        every semilocal ring $R$ has only finitely many indecomposable projective modules up to isomorphism,
        and every f.g.\ projective module is a direct sum of these modules.
    \end{remark}

    \begin{cor}\label{AZ:CR:three-implies-two}
        Let $C=\Cent(R)$, $\sigma\in\Aut(C)$ and
        assume $R$ is semilocal or $\Q$-finite.
        Then $R$ is Morita equivalent over $C$ to a (central) $C$-algebra with an anti-automorphism of type $\sigma$
        if and only if $R$ is Morita equivalent to $R^\op$ (as rings) via Morita equivalence of type $\sigma$.
    \end{cor}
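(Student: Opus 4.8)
The plan is to combine Proposition~\ref{AZ:PR:being-mor-eq-to-ring-with-anti-auto} with Theorem~\ref{AZ:TH:R-mor-eq-to-R-op}. The forward direction is the footnote (B)$\derives$(C) already observed in section~\ref{subsection:types}, so the only content is (C)$\derives$(B). Assume $R$ is Morita equivalent to $R^\op$ via an equivalence of type $\sigma$; equivalently, there is an $(R^\op,R)$-progenerator $K$ of type $\sigma$, which we regard as a double $R$-progenerator. By Proposition~\ref{AZ:PR:being-mor-eq-to-ring-with-anti-auto}, it suffices to produce a single regular bilinear space $(M,b,K')$ with $M$ an $R$-progenerator and $K'$ a double $R$-progenerator of type $\sigma$. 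The natural candidate is $K'=K$ itself (or a suitable $[1]$-shift of it), and the whole point of Theorem~\ref{AZ:TH:R-mor-eq-to-R-op} is to manufacture the $R$-progenerator $M$ on which $K$ carries a regular form.

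Here is the construction I would carry out. Fix any $R$-progenerator $P$ (e.g.\ $P=R$). By Theorem~\ref{AZ:TH:R-mor-eq-to-R-op}, there is $n\in\N$ with $P\cong P^{[1]^n}$, the $[1]$-dual taken with respect to $K$. Set
\[
M \;=\; P\oplus P^{[1]}\oplus P^{[1][1]}\oplus\cdots\oplus P^{[1]^{n-1}}\ .
\]
Each summand is f.g.\ projective by Lemma~\ref{AZ:LM:right-reg-iff-left-reg}(ii), and $P$ is a generator, so $M$ is an $R$-progenerator. Now define $b:M\times M\to K$ by pairing the $i$-th summand with the $(n-1-i)$-th summand using evaluation: for $x_i\in P^{[1]^i}$ and, say, identifying $P^{[1]^{n}}$ with $P$ via the isomorphism from Theorem~\ref{AZ:TH:R-mor-eq-to-R-op}, one sets $b$ on the $(i,j)$-block to be the evaluation pairing $P^{[1]^i}\times P^{[1]^j}\to K$ when $i+j=n-1$ (suitably composed with $\Phi$ and the fixed isomorphism) and $0$ otherwise. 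This is exactly the "hyperbolic" pattern used in the proof of Theorem~\ref{AZ:TH:being-mor-eq-to-ring-with-inv} — there $n=2$ and no identification is needed; here one cycles through $n$ blocks. The right adjoint $\rAd{b}$ is then a block matrix each of whose nonzero blocks is either an identity map, a copy of $\Phi_{P^{[1]^i}}$, or the fixed isomorphism $P^{[1]^n}\cong P$, and these are all bijective (Lemma~\ref{AZ:LM:right-reg-iff-left-reg}(i)), so $\rAd{b}$ is bijective; by Lemma~\ref{AZ:LM:right-reg-iff-left-reg}(iii) the form $b$ is regular. Applying Proposition~\ref{AZ:PR:being-mor-eq-to-ring-with-anti-auto} to $(M,b,K)$, the corresponding anti-automorphism of $\End_R(M)$ has type $\sigma$ and $\End_R(M)\CMoreq{C}R$, which is precisely condition (B) for the automorphism $\sigma$.

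The step I expect to be fussiest is not conceptual but bookkeeping: checking that $b$ is a genuine bilinear form valued in the double module $K$ (i.e.\ that the evaluation pairings $P^{[1]^i}\times P^{[1]^{n-1-i}}\to K$ are biadditive and satisfy the two $\mul{0},\mul{1}$ compatibility identities), and tracking the fixed isomorphism $P^{[1]^n}\cong P$ carefully enough that $\rAd{b}$ really is the claimed block matrix. For $n=2$ this is literally the computation already done in Theorem~\ref{AZ:TH:being-mor-eq-to-ring-with-inv}; for general $n$ it is the same computation repeated along a cycle, using Proposition~\ref{AZ:PR:dfn-of-phi}(i) (naturality of $\Phi$) to commute the chosen isomorphism past the duality functors. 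No new idea is needed beyond Theorem~\ref{AZ:TH:R-mor-eq-to-R-op}, whose proof — reducing mod the radical to the semisimple or $T\times\Lambda$ case and invoking finiteness of the set of indecomposable projectives — is where the real work sits.
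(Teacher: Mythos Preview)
Your high-level strategy is exactly the paper's: start from a double $R$-progenerator $K$ of type $\sigma$, invoke Theorem~\ref{AZ:TH:R-mor-eq-to-R-op} to find $n$ with $R^{[1]^n}\cong R$, set $M=\bigoplus_{m=0}^{n-1}R^{[1]^m}$, and then feed a regular form $(M,b,K)$ into Proposition~\ref{AZ:PR:being-mor-eq-to-ring-with-anti-auto}.

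Where you diverge is in the construction of $b$, and there your ``pair summand $i$ with summand $n-1-i$'' scheme does not work as written. The hyperbolic pattern from the proof of Theorem~\ref{AZ:TH:being-mor-eq-to-ring-with-inv} that you are imitating relies essentially on the involution $\theta$ of $K$: the second half of that form is $(fy)^\theta$, and it is $u_{\theta}$ that converts a $[0]$-dual into a $[1]$-dual so that the anti-diagonal block of $\rAd{b}$ becomes $u_{\theta,P^{[1]}}\circ\Phi_P$. Here $K$ carries no involution, so there is no $u_\theta$ and hence no natural bilinear pairing $P^{[1]^i}\times P^{[1]^{n-1-i}}\to K$ for general $i$; the only ``evaluation'' available is $Q\times Q^{[1]}\to K$, which pairs \emph{adjacent} levels, not complementary ones. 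Your list of block entries (identities, $\Phi$'s, and the fixed isomorphism) therefore cannot all land in the right places.

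The paper sidesteps this entirely. Since $[1]$ cyclically permutes the summands (using $R^{[1]^n}\cong R$ once), one has $M\cong M^{[1]}$; pick \emph{any} isomorphism $f:M\to M^{[1]}$ and set $b(x,y)=(fy)(x)$. Then $\rAd{b}=f$ is bijective by construction, and Lemma~\ref{AZ:LM:right-reg-iff-left-reg}(iii) gives left regularity. No block bookkeeping, no $\Phi$, no symmetry pattern is needed. Your proof becomes correct (and two lines long) once you replace the block-by-block construction with this observation.
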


    \begin{proof}
        We only check the nontrivial direction.
        Assume $R$ is Morita equivalent to $R^\op$  via Morita equivalence of type $\sigma$.
        Then there exists a double $R$-progenerator $K$ of type $\sigma$. By Theorem~\ref{AZ:TH:R-mor-eq-to-R-op},
        there exists $n\in\N$ such that $R^{[1]^n}\cong R_R$. Let $M=\bigoplus_{m=0}^{n-1}R^{[1]^m}$. Then
        there is an isomorphism $f:M\to M^{[1]}$. This isomorphism gives rise to a right regular bilinear space
        $(M,b,K)$ with $f=\rAd{b}$, namely, $b:M\times M\to K$ is given by $b(x,y)=(fy)x$.
        By Lemma~\ref{AZ:LM:right-reg-iff-left-reg}(iii), $b$ is regular,
        so we are  done by Proposition~\ref{AZ:PR:being-mor-eq-to-ring-with-anti-auto}.
    \end{proof}

    The proof of Corollary \ref{AZ:CR:three-implies-two} cannot be applied
    to arbitrary rings since there are double $R$-modules $K$ for which $M^{[1]}\ncong M$
    for all $0\neq M\in\rproj{R}$:

    \begin{example}\label{AZ:EX:bad-progenerator}
        Let $F$ be a field and let $R=\dirlim\{\nMat{F}{2}^{\otimes n}\}_{n\in\N}$.
        Then any f.g.\ projective right $R$-module is obtained by scalar extension from
        a f.g.\ projective module over $\nMat{F}{2}^{\otimes n}\hookrightarrow R$. Using this, it
        not
        hard (but tedious) to show that the monoid $(\rproj{R}/\!\cong,\oplus)$
        is isomorphic to $(\Z[\frac{1}{2}]\cap [0,\infty),+)$. If $V_n$ is the unique simple
        projective right module over $\nMat{F}{2}^{\otimes n}$, then $V_n\otimes R$ is mapped to $2^{-n}$.

        Let $T$ denote the transpose involution on $\nMat{F}{2}$. Then
        $\what{T}=\dirlim\{T^{\otimes n}\}_{n\in\N}$ is an involution of $R$.
        Let $K=R^2\in\rproj{R}$. Then $\End_R(K)\cong \nMat{R}{2}\cong R$ and using
        $\what{T}$, we can identify $\End_R(K)$ with $R^\op$, thus making $P$ into an $(R^\op,R)$-progenerator.
        We claim that for $M\in\rproj{R}$ and $n\in\N$, $M^{[1]}\cong M$ implies $M=0$.
        To see this,
        let $\vphi_1$ be the map obtained from $[1]$ by identifying $\rproj{R}/\!\cong$
        with $\Z[\frac{1}{2}]\cap [0,\infty)$.
        Then $\vphi_1(2)=1$ because $(R_R^2)^{[1]}\cong K_1^2\cong R_R$.
        Therefore, $\vphi_1(x)=\frac{1}{2}x$
        for all $x\in \Z[\frac{1}{2}]\cap [0,\infty)$, which means
        that $\vphi_1(x)\neq x$ for all $0\neq x\in \Z[\frac{1}{2}]\cap [0,\infty)$.
    \end{example}

\section{Semiperfect Rings}
\label{section:semiperfect}

    Let $R$ be a semilocal ring that is Morita equivalent to its opposite ring. While Corollary~\ref{AZ:CR:three-implies-two}
    implies that $R$ is Morita equivalent to  a ring with an anti-automorphism,  it does not provide any information
    about this ring. However, when  $R$ is \emph{semiperfect}, we can
    actually point out a specific ring which is Morita equivalent to $R$ and has an anti-automorphism.

    Recall that a ring $R$ is semiperfect if $R$ is semilocal
    and idempotents lift modulo $\Jac(R)$ (e.g.\ if $\Jac(R)$ is nil). In this case, the
    map
    \[P\mapsto P/P\Jac(R)~:~\rproj{R}/\!\cong\quad\to\quad \rproj{(R/\Jac(R))}/\!\cong\]
    is bijective (e.g.\ see \cite[\S2.9]{Ro88} or
    \cite[Th.\ 2.1]{Ba60}). Thus, up to isomorphism,
    $R$ admits finitely many indecomposable projective right $R$-modules
    $P_1,\dots,P_t$ and any $P\in\rproj{R}$ can be written as $P\cong \bigoplus_{i=1}^t P_i^{n_i}$
    with $n_1,\dots,n_t$ uniquely determined. In particular, $R_R\cong \bigoplus_{i=1}^t P_i^{m_i}$
    for some (necessarily positive) $m_1,\dots,m_t$.
    The ring $R$ is called \emph{basic} if $m_1=\dots=m_t=1$, namely, if $R_R$ is a sum of
    non-isomorphic indecomposable projective modules. 
    It is well-known that every semiperfect ring $R$
    admits a basic ring $S$ that is Morita equivalent to it over $\Cent(R)$, and $S$ is well-determined
    up to isomorphism as a $\Cent(R)$-algebra. (Indeed, $S$ and the map $\Cent(R)\to S$ can be determined from $\rMod{R}$;
    see
    \cite[Prp.\ 18.37]{La99}, the preceding discussion, and \cite[Rm.\ 18.43]{La99}. Once $S$ is fixed,
    the Morita equivalence  $\rMod{S}\to\rMod{R}$ is not unique in general.)
    Explicitly, we can take $S=\End_R(M)$, where $M=P_1\oplus\dots\oplus P_t$.
    (For example,
    if $R=\nMat{L}{n}$ with $L$ a local ring, then $S=L$.)

    Assume now that there is an $(R^\op,R)$-progenerator $K$ of type $\sigma\in\Aut(\Cent(R))$. Then the functor $[1]$ must
    permute the isomorphism classes of $P_1,\dots,P_t$ (because they are the only indecomposable  modules in
    $\rproj{R}$) and hence stabilize $M=P_1\oplus\dots\oplus P_t$. Therefore, as in the proof of Corollary~\ref{AZ:CR:three-implies-two},
    $\End_R(M)$, the basic ring which is Morita equivalent to $R$,
    has an anti-automorphism of type $\sigma$. We have thus obtained the following proposition.

    \begin{prp}\label{AZ:PR:anti-automorphisms-pass-to-the-basic-ring}
        Let $R$ be a semiperfect ring and let
        $S$ be a basic ring that is Morita equivalent to $R$ over $\Cent(R)$.
        Then $R$ is Morita equivalent to $R^\op$ via
        an equivalence of type $\sigma\in\Aut(\Cent(R))$
        if and only if $S$ has an anti-automorphism of type $\sigma$.
    \end{prp}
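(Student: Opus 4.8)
The plan is to dispatch the two implications of Proposition~\ref{AZ:PR:anti-automorphisms-pass-to-the-basic-ring} separately: the forward one is essentially the computation already carried out in the discussion preceding the statement, and the converse is the elementary (B)$\derives$(C) construction of the footnote in Section~\ref{subsection:types}.

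First I would prove that if $R$ is Morita equivalent to $R^\op$ via an equivalence of type $\sigma$, then $S$ has an anti-automorphism of type $\sigma$. Such an equivalence amounts to a double $R$-progenerator $K$ of type $\sigma$. By Lemma~\ref{AZ:LM:right-reg-iff-left-reg}(ii) the functor $[1]$ is an auto-equivalence of $\rproj{R}$, and since it is additive it permutes the isomorphism classes of the finitely many indecomposable projectives $P_1,\dots,P_t$; hence it fixes the isomorphism class of $M:=P_1\oplus\dots\oplus P_t$. By the semiperfect structure theory recalled above, $M$ is an $R$-progenerator and $S\cong\End_R(M)$ \emph{as $\Cent(R)$-algebras}. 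Now I would fix an isomorphism $f\colon M\to M^{[1]}$, put $b(x,y)=(fy)(x)$ so that $\rAd{b}=f$ is bijective, deduce that $b$ is right regular and therefore regular by Lemma~\ref{AZ:LM:right-reg-iff-left-reg}(iii), and read off its corresponding anti-automorphism $\alpha$ of $\End_R(M)$ via Theorem~\ref{AZ:TH:correspondence}. Finally, exactly as in the proof of Proposition~\ref{AZ:PR:being-mor-eq-to-ring-with-anti-auto}, the chain $b(x,y)\mul{1}\sigma(c)=b(x,y)\mul{0}c=b(cx,y)=b(x,c^\alpha y)=b(x,y)\mul{1}c^\alpha$ together with the faithfulness of $K_1$ shows that $\alpha$ has type $\sigma$.

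For the converse I would argue as follows. Suppose $S$ admits an anti-automorphism $\alpha$ of type $\sigma$. Since $S$ is Morita equivalent to $R$ over $C:=\Cent(R)$, pick an $(S,R)$-progenerator $P$ with $cp=pc$ for all $c\in C$, and run the construction from the footnote to the chain (A)$\derives$(B)$\derives$(C): form the $(R^\op,S)$-bimodule $P'$ with underlying group $P$ and action $r^\op\cdot p\cdot s=s^\alpha pr$. Then $P'$ is an $(R^\op,S)$-progenerator, so $K:=P'\otimes_S P$ is an $(R^\op,R)$-progenerator, and a direct computation shows its type is $\sigma$; equivalently, $R\Moreq R^\op$ via an equivalence of type $\sigma$. (Since $R$ is in particular semilocal, one could alternatively invoke Corollary~\ref{AZ:CR:three-implies-two}.)

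I do not anticipate a genuine obstacle here: both directions are bookkeeping on top of results already in place. The one point that deserves care is insisting that the identification $S\cong\End_R(M)$ be one of $\Cent(R)$-algebras, so that the \emph{type} of the anti-automorphism is preserved under transport along it; this is exactly what the cited facts on basic rings (\cite{La99}) guarantee. A still smaller point is verifying that $M=P_1\oplus\dots\oplus P_t$ is an $R$-progenerator, which is immediate from $R_R\cong\bigoplus_i P_i^{m_i}$ with every $m_i\ge 1$ (so $M$ is a summand of $R_R$ and $R_R$ is a summand of a finite power of $M$).
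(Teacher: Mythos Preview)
Your proposal is correct and follows essentially the same route as the paper: the forward direction is exactly the argument in the paragraph preceding the proposition (the functor $[1]$ permutes the indecomposable projectives, hence fixes $M=P_1\oplus\dots\oplus P_t$, and one concludes via the bilinear-form construction of Corollary~\ref{AZ:CR:three-implies-two}/Proposition~\ref{AZ:PR:being-mor-eq-to-ring-with-anti-auto}), while the converse is the trivial (B)$\Rightarrow$(C) implication from the footnote in \S\ref{subsection:types}. Your write-up is in fact a bit more explicit than the paper's on the bookkeeping points (progenerator status of $M$, transport of type), which is fine.
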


    Proposition \ref{AZ:PR:anti-automorphisms-pass-to-the-basic-ring}
    has a version for involutions in which the ring $S$ is replaced
    with $\nMat{S}{2}$.

    \begin{prp}\label{AZ:PR:involutions-pass-to-the-basic-ring}
        Let $R$ be a semiperfect ring and let
        $S$ be a basic ring that is Morita equivalent to $R$ over $\Cent(R)$.
        If $R$ has an involution of type $\sigma$, then so does $\nMat{S}{2}$.
    \end{prp}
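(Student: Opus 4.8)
The plan is to reduce to Theorem~\ref{AZ:TH:being-mor-eq-to-ring-with-inv} by showing that an involution of type $\sigma$ on $R$ produces a double $R$-progenerator of type $\sigma$ admitting an involution, and then to transport this along the Morita equivalence $\rMod{S}\to\rMod{R}$ in order to land precisely on $\nMat{S}{2}$. Concretely, write $C=\Cent(R)$, let $P_1,\dots,P_t$ be the indecomposable projective right $R$-modules, put $M=P_1\oplus\dots\oplus P_t$, so that $S=\End_R(M)$ and $M$ is an $(S,R)$-progenerator of type $\id_C$. First I would take the given involution $\beta$ of $R$ and use Theorem~\ref{AZ:TH:correspondence} (with the progenerator $R_R$) to produce a double $R$-progenerator $K=K_\beta$ of type $\sigma$ together with an involution $\theta=\theta_\beta$; by Lemma~\ref{AZ:LM:double-prog-factory} this $K$ has exactly type $\sigma$. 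The point of passing through the bilinear-form machinery rather than working with $\beta$ directly is that $[1]$ (computed with respect to this $K$) is the functor whose fixed objects control which endomorphism rings inherit involutions.

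Next I would analyze the action of $[1]$ on $\rproj R$. Because $K$ is a double $R$-progenerator, Lemma~\ref{AZ:LM:right-reg-iff-left-reg}(ii) shows $[1]$ is an auto-equivalence of $\rproj R$, hence it permutes the isomorphism classes of the indecomposables $P_1,\dots,P_t$; consequently $M^{[1]}\cong M$. Thus there is an $R$-linear isomorphism $f\colon M\to M^{[1]}$, and $b(x,y):=(fy)(x)$ defines a right regular — hence, by Lemma~\ref{AZ:LM:right-reg-iff-left-reg}(iii), regular — bilinear form $b\colon M\times M\to K$. This already gives, via the corresponding anti-automorphism, an anti-automorphism of $\End_R(M)=S$ of type $\sigma$; but to get an \emph{involution} I need $b$ to be $\theta$-symmetric, which a general choice of $f$ need not satisfy. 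The standard fix (this is the analogue of the "hyperbolic" construction used in the proof of Theorem~\ref{AZ:TH:being-mor-eq-to-ring-with-inv}) is to enlarge: set $M' = M \oplus M^{[1]}$ and define
\[
 b'\big(x\oplus\varphi,\ y\oplus\psi\big) = \psi(x) + \big(\varphi(y)\big)^{\theta}.
\]
This $b'$ is visibly $\theta$-symmetric, and its right adjoint is the block matrix $\smallSMatII{0}{\id}{u_{\theta,M}\circ\Phi_M}{0}$, which is an isomorphism by Lemmas~\ref{AZ:LM:right-reg-iff-left-reg}(i) and~\ref{AZ:LM:u-defin}; so $b'$ is a regular $\theta$-symmetric form on the $R$-progenerator $M'$. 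Its corresponding anti-automorphism is therefore an involution of $\End_R(M')$, of type $\sigma$ by the type computation used in Proposition~\ref{AZ:PR:being-mor-eq-to-ring-with-anti-auto}.

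It remains to identify $\End_R(M')$ with $\nMat{S}{2}$. Here I would use that $[1]$ permutes $\{P_1,\dots,P_t\}$, so $M^{[1]}\cong M$ as right $R$-modules, whence $M' = M\oplus M^{[1]}\cong M\oplus M = M^2$; therefore $\End_R(M')\cong\End_R(M^2)\cong\nMat{\End_R(M)}{2}=\nMat{S}{2}$, and these isomorphisms are $C$-algebra isomorphisms, so the type of the involution is preserved. The main obstacle in the argument is precisely the symmetry issue just addressed: an arbitrary isomorphism $M\cong M^{[1]}$ yields only an anti-automorphism of $S$, and one genuinely needs the doubling trick $M\rightsquigarrow M\oplus M^{[1]}$ to force a $\theta$-symmetric form — this is exactly why the conclusion is about $\nMat{S}{2}$ and not $S$ itself. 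Everything else (the adjoint being the displayed block matrix, the type bookkeeping, the matrix-ring identification) is routine and can be cited from the lemmas and propositions already proved. As in Theorem~\ref{AZ:TH:being-mor-eq-to-ring-with-inv}, one could also phrase this purely as: $R$ admits an involution of type $\sigma$ $\Rightarrow$ there is a double $R$-progenerator of type $\sigma$ with an involution $\Rightarrow$ (applying the converse direction of that theorem, but running the hyperbolic construction on $M$ rather than on a bare progenerator $P$) $\nMat{S}{2}$ carries an involution of type $\sigma$.
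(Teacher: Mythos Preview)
Your proposal is correct and follows essentially the same route as the paper: both produce from the involution on $R$ a double $R$-progenerator $K$ of type $\sigma$ with involution $\theta$, observe that $[1]$ permutes the indecomposable projectives so $M^{[1]}\cong M$, build the hyperbolic $\theta$-symmetric form on $M\oplus M^{[1]}$, and then identify $\End_R(M\oplus M^{[1]})\cong\nMat{S}{2}$. The only cosmetic differences are that the paper takes $K$ to be the \emph{standard} double $R$-module of $(R,\alpha)$ (with $\theta=\alpha$) rather than invoking Theorem~\ref{AZ:TH:correspondence}, and it writes the hyperbolic form as the orthogonal sum $b_{P_1}\perp\dots\perp b_{P_t}$ rather than directly on $M\oplus M^{[1]}$; these amount to the same construction. (One tiny slip: in your block matrix the subscript should be $u_{\theta,M^{[1]}}$, not $u_{\theta,M}$.)
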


    \begin{proof}
        Let $\alpha$ be an involution of $R$ of type $\sigma$, and let $K$ be the double $R$-module obtained
        from $R$ by setting $k\mul{0}r=r^\alpha k$ and $k\mul{1}r=kr$ ($k,r\in R$).
        Then $K$ is double $R$-progenerator of type $\sigma$ admitting an involution $\theta:=\alpha$.
        For any $P\in\rproj{R}$, let $b_P$ denote the bilinear form $b$ constructed in
        the proof of Theorem~\ref{AZ:TH:being-mor-eq-to-ring-with-inv}. Then
        $(b_P,P\oplus P^{[1]},K)$ is an $\alpha$-symmetric  regular bilinear space.
        Let $P_1,\dots,P_t$ be a complete list of indecomposable projective right $R$-modules up to isomorphism.
        Then $b:=b_{P_1}\perp\dots\perp b_{P_t}$ is a  regular $\alpha$-symmetric bilinear form
        defined over $M\oplus M^{[1]}$, where $M=P_1\oplus\dots\oplus P_t$. Therefore,
        $\End_R(M\oplus M^{[1]})$ has an involution of type $\sigma$, namely, the corresponding anti-automorphism
        of $b$. However,
        we have seen above that
        $M\cong M^{[1]}$, so $\End_R(M\oplus M^{[1]})\cong\nMat{\End_R(M)}{2}\cong\nMat{S}{2}$ as $\Cent(R)$-algebras.
    \end{proof}

    In fact, in many cases, the assumptions of Proposition~\ref{AZ:PR:involutions-pass-to-the-basic-ring}
    imply that $S$ itself has an involution. The general statement and its proof will be given in the next section.
    However, in case $S$ is a division ring or local with with $2\in\units{S}$,
    there is a significantly simpler proof, with which we finish this section.

    \begin{prp}\label{AZ:PR:inv-over-local-rings}
        Let $L$ be a local ring
        and let $R=\nMat{L}{n}$. Assume $2\in\units{L}$ or $L$ is a division ring. Then
        $R$ has an involution of type $\sigma$ if and only if $L$ has an involution of type $\sigma$.
    \end{prp}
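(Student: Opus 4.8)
I would first dispose of the implication ``$L$ has an involution of type $\sigma$ $\Longrightarrow$ $R=\nMat{L}{n}$ does'': if $\tau$ is such an involution of $L$, then $A=(a_{ij})\mapsto(\tau(a_{ji}))_{i,j}$ is an involution of $\nMat{L}{n}$ whose restriction to $\Cent(R)=\Cent(L)$ is $\tau|_{\Cent(L)}=\sigma$. For the converse I may assume $n\ge 2$, the case $n=1$ being trivial.

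So let $\alpha$ be an involution of $R=\nMat{L}{n}$ of type $\sigma$. View $L^n$ as a right $L$-progenerator with $\End_L(L^n)=R$; by Theorem~\ref{AZ:TH:correspondence}, $\alpha$ is the anti-automorphism corresponding to a regular $\theta$-symmetric bilinear form $b\colon L^n\times L^n\to K$, where $K:=K_\alpha$ is a double $L$-progenerator of type $\sigma$ (Lemma~\ref{AZ:LM:double-prog-factory}) carrying the involution $\theta:=\theta_\alpha$; because $L$ is local, $K_0$ and $K_1$ are free of rank one as right $L$-modules with common radical $K\mathfrak{m}$, where $\mathfrak{m}:=\Jac(L)$. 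The whole strategy is to ``compress'' $b$ to a form on $L$ itself: it suffices to exhibit a \emph{generator} $k'$ of $K$ that is symmetric for \emph{some} involution $\theta'$ of $K$, for then $c(a,b):=k'\mul1 b\mul0 a$ is a $\theta'$-symmetric bilinear form $L\times L\to K$ whose right adjoint $a\mapsto k'\mul1 a$ is bijective, hence regular (Lemma~\ref{AZ:LM:right-reg-iff-left-reg}(iii)); applying Proposition~\ref{AZ:PR:existence-of-anti-auto-crit} to the progenerator $L_L$ (for which $\End_L(L)=L$) then yields an involution of $L$ of type $\sigma$.

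To produce $(\theta',k')$ I reduce modulo $\mathfrak{m}$: by Lemma~\ref{AZ:LM:one-dual-commutes-with-scalar-ext-I} and Proposition~\ref{AZ:PR:one-dual-commutes-with-scalar-ext-I}, $\overline{K}=K/K\mathfrak{m}$ is a double $\overline{L}$-progenerator over the division ring $\overline{L}=L/\mathfrak{m}$, the map $\theta$ induces an involution $\overline{\theta}$ on it, and $b$ induces a regular $\overline{\theta}$-symmetric form $\overline{b}$ on $\overline{L}^n$. If some value $\overline{b}(\overline{e},\overline{e})$ is nonzero, take $\theta':=\theta$ and $k':=b(e,e)$ for a lift $e$ of $\overline{e}$: this $k'$ is $\theta$-symmetric because $b$ is, and it generates $K$ because $k'\notin K\mathfrak{m}$ (its reduction is $\overline{b}(\overline{e},\overline{e})\neq0$). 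This settles the ``anisotropic'' case.

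The remaining case, where $\overline{b}$ is alternating (all $\overline{b}(\overline{x},\overline{x})=0$), is the genuine obstacle: it occurs in every characteristic — symplectic forms are an example — so there is no anisotropic line to restrict to. Here one uses that $\overline{b}$ alternating forces $\overline{b}(\overline{x},\overline{y})+\overline{b}(\overline{y},\overline{x})=0$, i.e.\ every value of $\overline{b}$ is $\overline{\theta}$-antisymmetric; picking $f_1,f_2\in L^n$ with $k:=b(f_1,f_2)\notin K\mathfrak{m}$ (possible since $\overline{b}\neq0$) gives $k^\theta\equiv -k\pmod{K\mathfrak{m}}$. Now $-\theta$ is again an involution of $K$; when $2\in\units{L}$ I take $\theta':=-\theta$ and $k':=k-k^\theta$, which is $\theta'$-symmetric and reduces to $2\overline{k}$, a generator of $\overline{K}$ precisely because $2\in\units{\overline{L}}$; when $L$ is a division ring of characteristic $2$ there is no $\mathfrak{m}$, so $k^\theta=-k=k$ is already $\theta$-symmetric and I take $\theta':=\theta$, $k':=k$. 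In either sub-case $(\theta',k')$ is as required and the recipe of the second paragraph concludes the argument. The point I expect to cost the most care is precisely this alternating case — making the ``$-\theta$/symmetrization'' trick work uniformly under both hypotheses, where $\mathfrak{m}$ may or may not vanish — while everything else is routine bookkeeping with the dualities $[0],[1]$ and reduction mod $\mathfrak{m}$.
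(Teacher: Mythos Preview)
Your proof is correct, but it takes a genuinely different route from the paper's.

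The paper works on the other side of the Morita equivalence: it views the column module $P$ as the unique indecomposable projective \emph{right $R$-module} (so $\End_R(P)\cong L$), uses the \emph{standard} double $R$-module $K$ of $(R,\alpha)$, observes $P\cong P^{[1]}$, fixes any isomorphism $f\colon P\to P^{[1]}$, and forms the $\theta$-symmetric and $(-\theta)$-symmetric bilinear forms
\[
b(x,y)=(fy)x+((fx)y)^\theta,\qquad b'(x,y)=(fy)x-((fx)y)^\theta.
\]
Since $\rAd{b}+\rAd{b'}=2f$, when $2\in\units{L}$ one of $f^{-1}\rAd{b}$, $f^{-1}\rAd{b'}$ is a unit in the local ring $\End_R(P)\cong L$, hence one of $b,b'$ is regular; when $L$ is a division ring, either $b'$ is regular or $b'=0$, in which case $b''(x,y):=(fy)x$ is already $\theta$-symmetric and regular. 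No reduction modulo $\mathfrak{m}$ is needed.

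Your approach instead passes through Theorem~\ref{AZ:TH:correspondence} to obtain a rank-one double \emph{$L$}-progenerator $K_\alpha$ with involution, and reduces the problem to finding a $(\pm\theta)$-fixed generator of this rank-one object; you then locate such a generator by reducing modulo $\mathfrak{m}$ and splitting into the anisotropic/alternating cases. Both arguments ultimately exploit the same ``$\pm\theta$'' symmetrization and the dichotomy $2\in\units{L}$ versus $L$ a division ring. The paper's version is a bit slicker --- it avoids the mod-$\mathfrak{m}$ reduction and the case split on anisotropy --- while yours makes the underlying picture (a symmetric generator in a rank-one double module) more explicit and is closer in spirit to the later, more elaborate Theorem~\ref{AZ:TH:involution-transfer}.
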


    \begin{proof}
        That $R$ has an involution when $L$ has an involution is obvious.
        Conversely, let $\alpha$ be an involution of $R$ and let $K$, $\theta$ be as in the proof of
        Proposition~\ref{AZ:PR:involutions-pass-to-the-basic-ring}.
        Let $P$ be the unique indecomposable projective right $R$-module.
        Then necessarily $P\cong P^{[1]}$. Fix an isomorphism $f:P\to P^{[1]}$
        and observe that the bilinear form $b(x,y):=(fy)x+((fx)y)^\theta$
        (resp.\ $b'(x,y):=(fy)x-((fx)y)^\theta$) is $\theta$-symmetric (resp.\ $(-\theta)$-symmetric).
        In addition, $\rAd{b}+\rAd{b'}=2f$.

        Assume $2\in\units{L}$. Then $f^{-1}\circ\rAd{b}+f^{-1}\circ\rAd{b'}=2\id_P$.
        Since the r.h.s.\ is invertible and lies in the local ring $\End_R(P)\cong L$, one of  $f^{-1}\circ\rAd{b}$, $f^{-1}\circ\rAd{b'}$
        must be invertible, hence one of $b$, $b'$ is right regular. In any case, we get that $L\cong\End_R(P)$ has an involution
        of type $\sigma$, namely,  the corresponding anti-automorphism of $b$ or $b'$.

        When $L$ is a division ring, $P$ is simple, so $b'$ is regular if $\rAd{b'}\neq 0$.
        If $\rAd{b'}=0$, then $b'=0$, hence $(fy)x=((fx)y)^\theta$ for all $x,y\in P$.
        This means that the bilinear form $b''(x,y):=(fy)x$ is $\theta$-symmetric.
        As $\rAd{b''}=f$, $b''$ is regular.
    \end{proof}

    \begin{remark}
        In case $L$ is a division ring, Proposition~\ref{AZ:PR:inv-over-local-rings} follows
        from \cite[Th.~1.2.2]{Her76}. The case where $L$ is also finite dimensional
        over its center was noted earlier by Albert (e.g.\ see \cite[Th.~10.12]{Al61StructureOfAlgs}).
    \end{remark}

\section{Transferring Involutions}
\label{section:transfer}

    Motivated by Proposition~\ref{AZ:PR:involutions-pass-to-the-basic-ring},
    this section concerns the question of whether the fact that the ring $\nMat{R}{n}$
    has an involution implies that $R$ has an involution. We shall provide
    a positive answer for ``most'' semilocal rings, but the question is still open for general semilocal rings,
    and even for finite dimensional algebras over fields.
    For non-semilocal rings, that $\nMat{R}{n}$ has an involution does not imply that $R$ has an anti-automorphism; an example
    will be given in section~\ref{section:no-inv}.

    Throughout, $R$ is a ring, $C=\Cent(R)$, $\sigma\in\Aut(C)$, and $n\in\N$ is fixed.

    \medskip

    We begin by introducing some notation. For an anti-endomorphism $\gamma:R\to R$,
    we define the \emph{standard double $R$-module} of $(R,\gamma)$ to be $R$
    endowed with the actions $k\mul{0}r=r^\gamma k$ and $k\mul{1}r=kr$. Observe that
    when $\gamma$ is invertible,
    this yields a double $R$-progenerator of same  type as $\gamma$. We now recall the following
    theorem, which follows from Th.~7.8 in \cite{Fi13A} and the comment following the statement.
    To make the exposition more self-contained, we present here an ad-hoc proof which is slightly simpler
    than the proof given in \cite{Fi13A}, but not essentially different.

    \begin{thm}\label{AZ:TH:anti-autos-are-transposes}
        Assume that up to isomorphism $M=R_R$ is the only right $R$-module
        satisfying $M^n\cong R^n$ (e.g.\ when $R$ is semilocal).
        Then for every anti-automorphism $\alpha:\nMat{R}{n}\to\nMat{R}{n}$,
        there exists an anti-automorphism $\gamma:R\to R$
        such that $K_\alpha$ is isomorphic to the standard double $R$-module
        of $(R,\gamma)$. Furthermore, there is an inner automorphism
        $\vphi\in\Inn(\nMat{R}{n})$ such that $\vphi\circ \alpha=T_n\gamma$
        where $T_n\gamma$ is defined by $(r_{ij})^{T_n\gamma}=(r_{ji}^\gamma)$.
    \end{thm}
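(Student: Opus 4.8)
The plan is to translate the anti-automorphism $\alpha$ of $\nMat{R}{n}$ into the language of bilinear forms via Theorem~\ref{AZ:TH:correspondence}, then exploit the hypothesis on modules to pin down the double $R$-progenerator $K_\alpha$ up to the expected shape. First I would view $\nMat{R}{n}$ as $\End_R(M_0)$ with $M_0=R^n_R$, so that by Theorem~\ref{AZ:TH:correspondence} the anti-automorphism $\alpha$ is the corresponding anti-automorphism of a regular bilinear form $b_\alpha:M_0\times M_0\to K_\alpha$, where $K_\alpha$ is a double $R$-progenerator (Lemma~\ref{AZ:LM:double-prog-factory}) of type equal to the type of $\alpha$. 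Regularity of $b_\alpha$ gives an isomorphism $\rAd{b_\alpha}:M_0\to M_0^{[1]}$, i.e.\ $(R^n)^{[1]}\cong R^n$. Since $[1]$ is additive and $R^{[1]}\cong (K_\alpha)_1$, we get $((K_\alpha)_1)^n\cong R^n$ as right $R$-modules; the module hypothesis then forces $(K_\alpha)_1\cong R_R$.

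Next I would upgrade this module isomorphism to a statement about the double module. Fixing an $R$-linear isomorphism $(K_\alpha)_1\cong R_R$, transport the $\mul{0}$-action to $R$: the resulting second right $R$-module structure on $R$, call it given by $k\mapsto k\mul{0}r$, commutes with right multiplication (the double-module axiom), so it is described by a right $R$-module endomorphism of $R_R$ in each variable, i.e.\ by left multiplication; concretely $k\mul{0}r = \gamma(r)\,k$ for some additive map $\gamma:R\to R$, and the module-over-$\mul{0}$ axiom forces $\gamma(rs)=\gamma(s)\gamma(r)$ while $(K_\alpha)_0$ being a progenerator (it is isomorphic as a right module to $(K_\alpha)_1\cong R_R$ by Lemma~\ref{AZ:LM:right-reg-iff-left-reg} once we know $[0],[1]$ are mutually inverse equivalences, or directly since $K_\alpha$ is a double progenerator) forces $\gamma$ bijective. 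Hence $\gamma$ is an anti-automorphism of $R$ and $K_\alpha$ is precisely the standard double $R$-module of $(R,\gamma)$; comparing the $\mul{0}$ and $\mul{1}$ actions on central elements shows $\gamma$ has the same type as $\alpha$.

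For the second assertion I would compute the anti-automorphism of $\End_R(M_0)=\nMat{R}{n}$ induced by a concrete regular form valued in the standard double module of $(R,\gamma)$. Writing $M_0=R^n$ with standard basis, the ``identity'' pairing $b_0(x,y)=\sum_i \gamma(x_i)\,y_i$ — or rather the one whose right adjoint is the identity — has corresponding anti-automorphism exactly $T_n\gamma$, by a direct check: $b_0(Ax,y)=b_0(x,(T_n\gamma)(A)y)$ unwinds to $\gamma((Ax)_i)=\ldots$ and matches the entrywise transpose-and-$\gamma$ formula. Now both $b_\alpha$ and $b_0$ are regular bilinear forms on $M_0$ valued in similar (indeed isomorphic) double $R$-modules; by the uniqueness-up-to-similarity part of Theorem~\ref{AZ:TH:correspondence}, their corresponding anti-automorphisms $\alpha$ and $T_n\gamma$ differ by an inner automorphism of $\nMat{R}{n}$. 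More precisely, replacing $b_\alpha$ by a similar form we may assume both take values in the same $K$; then $\rAd{b_\alpha}$ and $\rAd{b_0}$ are two isomorphisms $M_0\to M_0^{[1]}$, so $g:=(\rAd{b_0})^{-1}\circ\rAd{b_\alpha}\in\units{\End_R(M_0)}=\units{\nMat{R}{n}}$, and a short computation shows $\alpha = (\text{conjugation by }g)\circ(T_n\gamma)$, i.e.\ $\vphi\circ\alpha=T_n\gamma$ with $\vphi$ conjugation by $g^{-1}$.

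\textbf{Main obstacle.} The routine-but-delicate point is the bookkeeping in the second step: going from the bare isomorphism $(K_\alpha)_1\cong R_R$ of right modules to the identification of $K_\alpha$ with a \emph{standard} double module requires carefully checking that the transported $\mul{0}$-action is genuinely left multiplication by an anti-automorphism (not merely some additive bijection) and that nothing is lost in the choice of isomorphism — different choices change $\gamma$ by an inner automorphism of $R$, which is harmless but must be tracked so that the final inner automorphism $\vphi$ in the conclusion absorbs it. Establishing that $(K_\alpha)_1\cong R_R$ as a \emph{right $R$-module} is itself the place where the module hypothesis ``$M^n\cong R^n\Rightarrow M\cong R$'' is used, and verifying that semilocal rings satisfy it (via cancellation, e.g.\ \cite[Th.\ 18.37]{La99}-type arguments or Bass' stable range) is the only external input.
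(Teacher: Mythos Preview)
Your proposal is correct and follows essentially the same route as the paper: identify $\nMat{R}{n}=\End_R(R^n)$, use regularity of $b_\alpha$ together with $R^{[1]}\cong(K_\alpha)_1$ and the cancellation hypothesis to get $(K_\alpha)_1\cong R_R$, transport the $\mul{0}$-action to obtain $\gamma$, and then compare with the explicit transpose form to produce the inner automorphism. The paper's argument for bijectivity of $\gamma$ is slightly slicker than yours (it reads it off directly from the isomorphism $R^\op\cong\End((K_\alpha)_1)$, $r^\op\mapsto[k\mapsto k\mul{0}r]$, rather than invoking that $(K_\alpha)_0$ is a progenerator), and conversely the paper simply cites \cite[Th.~7.8(iv)]{Fi13A} for the inner-automorphism step where you spell out the comparison of $\rAd{b_\alpha}$ with $\rAd{b_0}$; but these are presentational differences, not substantive ones.
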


    \begin{proof}
        Identify $\nMat{R}{n}$ with $\End_R(R^n)$, and let $b_\alpha:R^n\times R^n\to K:=K_\alpha$
        be the bilinear form induced by $\alpha$.
        Since $b_\alpha$ is regular, we have an isomorphism $\rAd{b_\alpha}:R^n\to (R^n)^{[1]}\cong(R^{[1]})^n$.
        By assumption, this means that $R_R \cong R^{[1]}$. Observe that $R^{[1]}=\Hom_R(R,K_0)\cong K_1$
        via $f\mapsto f(1)$. Thus, we may identify $R_R\cong R^{[1]}$ with $K_1$, so now $k\mul{1}r=kr$ for all $k,r\in K=R$.
        Define $\gamma:R\to R$ by $r^\gamma=1_R\mul{0} r$, and note that
        $k\mul{0}r=1\mul{0}r\mul{1}k=r^\gamma \mul{1}k=r^\gamma k$.
        This easily implies that $\gamma$ is an anti-\emph{endo}morphism
        of $R$, and that $K$ is just the standard double $R$-module of $(R,\gamma)$.
        Since $K$ is a double $R$-progenerator, we must have $R^\op\cong \End(K_1)$
        where the isomorphism is given by $r^\op\mapsto [k\mapsto k\mul{0}r]$. As
        $k\mul{0}r=r^\gamma k$, this forces $\gamma$ to be an anti-\emph{auto}morphism.
        Finally, we refer the reader to the proof of \cite[Th.~7.8(iv)]{Fi13A} for the existence of $\vphi$.
    \end{proof}

    As a corollary, we get the following theorem, which can be regarded as a
    variation of Propositions~\ref{AZ:PR:involutions-pass-to-the-basic-ring}.

    \begin{thm}\label{AZ:TH:involution-transfer-weak}
        Suppose that up to isomorphism $M=R_R$ is the only right $R$-module
        satisfying $M^n\cong R^n$. If $\nMat{R}{n}$ has an involution (resp.\ anti-automorphism) of type $\sigma$,
        then $\nMat{R}{2}$ (resp.\ $R$) has an involution (resp.\ anti-automorphism) of type $\sigma$.
    \end{thm}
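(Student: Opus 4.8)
The plan is to deduce Theorem~\ref{AZ:TH:involution-transfer-weak} from Theorem~\ref{AZ:TH:anti-autos-are-transposes} by translating the involution/anti-automorphism $\alpha$ of $\nMat{R}{n}$ into an anti-automorphism $\gamma$ of $R$, and then recombining $\gamma$ with a suitable bilinear form to produce the desired structure on $\nMat{R}{2}$ (resp.\ $R$). First I would treat the anti-automorphism case, which is immediate: if $\nMat{R}{n}$ has an anti-automorphism $\alpha$ of type $\sigma$, Theorem~\ref{AZ:TH:anti-autos-are-transposes} yields an anti-automorphism $\gamma\colon R\to R$ with $K_\alpha$ the standard double $R$-module of $(R,\gamma)$; since $K_\alpha$ has the same type as $\alpha$ (Lemma~\ref{AZ:LM:double-prog-factory}, or the computation in Proposition~\ref{AZ:PR:being-mor-eq-to-ring-with-anti-auto}), $\gamma$ has type $\sigma$, so $R$ itself has an anti-automorphism of type $\sigma$.

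For the involution case, suppose $\alpha$ is an involution of $\nMat{R}{n}$ of type $\sigma$. Applying Theorem~\ref{AZ:TH:anti-autos-are-transposes} I obtain an anti-automorphism $\gamma$ of $R$ of type $\sigma$ such that $K_\alpha$ is the standard double $R$-module $K$ of $(R,\gamma)$, and moreover $\varphi\circ\alpha = T_n\gamma$ for some inner automorphism $\varphi$ of $\nMat{R}{n}$. Since $\alpha$ is an involution, $K_\alpha = K$ carries the involution $\theta_\alpha$; thus $K$ is a double $R$-progenerator of type $\sigma$ \emph{admitting an involution}. Now I invoke exactly the construction in the proof of Theorem~\ref{AZ:TH:being-mor-eq-to-ring-with-inv}: taking the progenerator $P=R_R$ and forming $M = P\oplus P^{[1]}$, the form $b(x\oplus f,\,y\oplus g) = gx + (fy)^{\theta}$ is $\theta$-symmetric and regular, so its corresponding anti-automorphism is an involution of $\End_R(M)$ of type $\sigma$. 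It remains to identify $\End_R(M)$ with $\nMat{R}{2}$ as a $C$-algebra. Here $P^{[1]} = \Hom_R(R, K_0)\cong K_1 \cong R_R$ (the last isomorphism because, in the standard double module, $k\mul{1}r = kr$), so $M\cong R\oplus R = R^2$ and $\End_R(M)\cong\nMat{R}{2}$; one checks the $C$-action matches, exactly as in the proofs of Proposition~\ref{AZ:PR:being-mor-eq-to-ring-with-anti-auto} and Proposition~\ref{AZ:PR:involutions-pass-to-the-basic-ring}. Hence $\nMat{R}{2}$ has an involution of type $\sigma$.

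The only genuinely nontrivial input is the fact — supplied by Theorem~\ref{AZ:TH:anti-autos-are-transposes} — that the double $R$-progenerator $K_\alpha$ associated to $\alpha$ is forced (by the hypothesis that $R_R$ is the unique module with $M^n\cong R^n$) to be the standard module of an honest anti-automorphism of $R$; everything after that is bookkeeping with the constructions already set up in Sections~\ref{section:forms} and~\ref{section:double-prog}. The main point to be careful about is that the involution $\theta_\alpha$ on $K_\alpha$ really does become an involution of $K$ as the standard double module (i.e.\ the isomorphism $K_\alpha\cong K$ of Theorem~\ref{AZ:TH:anti-autos-are-transposes} is compatible with the double-module involutions); granting this — it is essentially the assertion that $\varphi\circ\alpha = T_n\gamma$ with $\varphi$ inner, whence $\gamma$ inherits from $\alpha$ the property of squaring to the identity up to an inner twist that can be absorbed — the argument closes. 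I therefore expect the write-up to be short: cite Theorem~\ref{AZ:TH:anti-autos-are-transposes}, note $K_\alpha$ has an involution, and apply the construction of Theorem~\ref{AZ:TH:being-mor-eq-to-ring-with-inv} with $P = R_R$.
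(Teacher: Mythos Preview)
Your proposal is correct and follows essentially the same route as the paper. One small point: your worry in the final paragraph is unnecessary, and the paper sidesteps it entirely. For the involution case the paper does \emph{not} identify $K_\alpha$ with the standard double module of $(R,\gamma)$; it works with $K_\alpha$ directly, notes that $K_\alpha$ carries the involution $\theta_\alpha$ (immediate from Theorem~\ref{AZ:TH:correspondence}), and only borrows from the \emph{proof} of Theorem~\ref{AZ:TH:anti-autos-are-transposes} the single fact that $(K_\alpha)_1\cong R_R$ (which follows from $R^n\cong (R^n)^{[1]}\cong (R^{[1]})^n$ and the uniqueness hypothesis). Your detour through the identification $K_\alpha\cong K$ is harmless---any double-module isomorphism transports $\theta_\alpha$ to an involution of $K$, no further compatibility needed---but the reasoning you sketch involving ``$\gamma$ squaring to the identity up to an inner twist'' is beside the point and can simply be dropped.
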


    \begin{proof}
        The case where $\nMat{R}{n}$ has an anti-automorphism follows  from Theorem~\ref{AZ:TH:anti-autos-are-transposes}.
        Let $\alpha$ be an involution of $\nMat{R}{n}$. By Theorem~\ref{AZ:TH:correspondence} and
        Lemma~\ref{AZ:LM:double-prog-factory}, $K_\alpha$ is a double $R$-progenerator admitting an involution.
        Thus, by the proof of Theorem~\ref{AZ:TH:being-mor-eq-to-ring-with-inv}, applied
        with $P=R_R$, $\End_R(R\oplus R^{[1]})$ has
        an involution of type $\sigma$. As in the proof of
        Theorem~\ref{AZ:TH:anti-autos-are-transposes},
        $R^{[1]}\cong (K_\alpha)_1\cong R_R$, so $\End_R(R\oplus R^{[1]})\cong\End_R(R^2)\cong\nMat{R}{2}$ (as
        $C$-algebras).
    \end{proof}

    We now show that if $\nMat{R}{n}$ has an involution, then so does $R$, under further assumptions on $R$.
    To make the statement more general, we introduce \emph{reduced types}.

    Set $\quo{R}:=R/\Jac(R)$ and $\quo{C}:=\Cent(\quo{R})$ ($\quo{C}$ may be larger than the image of $C=\Cent(R)$
    in $\quo{R}$). Let $S$ be a ring such that $\quo{S}:=S/\Jac(S)$ is equipped with
    a central $\quo{C}$-algebra structure (e.g.\ $R^\op$ or $\nMat{R}{n}$).
    If $P$ is an $(S,R)$-progenerator, then by Proposition~\ref{AZ:PR:one-dual-commutes-with-scalar-ext-I},
    $P\Jac(R)=\Jac(S)P$ and $\quo{P}:=P/P\Jac(R)$ is an $(\quo{S},\quo{R})$-progenerator. Thus, $\quo{P}$ induces
    an isomorphism $\Cent(\quo{R})\to\Cent(\quo{S})$. We may view this  isomorphism as an automorphism
    $\quo{\sigma}:\quo{C}\to \quo{C}$ which we call the \emph{reduced type} of $P$.
    Specializing to the case $S=R^\op$, we can define reduced types of $(R^\op,R)$-progenerators and
    double $R$-progenerators.
    In addition, if $\alpha$ is an anti-automorphism of $S$, then the \emph{reduced type} of $\alpha$ is the automorphism
    it induces on $\Cent(\quo{S})\cong\quo{C}$.
    It is easy to check
    that all previous results mentioning types are also valid for reduced types, provided the following convention:
    If $M$ is a right $R$-progenerator and $S=\End_R(M)$, then $\quo{C}$-algebra structure of $\quo{S}$
    is given by the isomorphism $\quo{C}\to \Cent(\quo{S})$  induced
    by the $(\quo{S},\quo{R})$-progenerator $\quo{M}:=M/M\Jac(R)$.

    \begin{thm}\label{AZ:TH:involution-transfer}
        Let $R$ be a semilocal ring such that $\nMat{R}{n}$
        has an involution of type $\sigma\in\Aut(C)$ and reduced type $\quo{\sigma}\in\Aut(\quo{C})$.
        Write
        $\quo{R}:=R/\Jac(R)\cong\prod_{i=1}^t\nMat{D_i}{n_i}$ where $D_i$ is a division ring
        and $n_i\in\N$, and identify
        $\quo{C}=\Cent(\quo{R})$ with $\prod_{i=1}F_i$ where $F_i=\Cent(D_i)$.
        We view each $F_i$ as a non-unital subring of $\quo{C}$.
        Assume that there is $1\leq \ell\leq t$ such that for all $ i\in\{1,\dots,t\}\setminus\{\ell\}$ we have
        \begin{enumerate}
            \item[(1)] $D_i$ is not a field, or $n_i$ is even, or $\quo{\sigma}|_{F_i}\neq\id_{F_i}$,
        \end{enumerate}
        and for the index $\ell$  we have
        \begin{enumerate}
            \item[(2)]
            $D_{\ell}$ is not a field, or $n_{\ell}$ is even, or $\quo{\sigma}|_{F_{\ell}}\neq\id_{F_\ell}$, or $2\in \units{D_{\ell}}$.
        \end{enumerate}
        Then  $R$ has an involution of type $\sigma$ and reduced type $\quo{\sigma}$.
    \end{thm}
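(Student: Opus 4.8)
The plan is to reduce to the already-established Theorem~\ref{AZ:TH:involution-transfer-weak} by showing that under the hypotheses (1)--(2), the ring $R$ satisfies the condition there: up to isomorphism $M=R_R$ is the only right $R$-module with $M^n\cong R^n$. Since $R$ is semilocal, $\rproj{R}/\!\cong$ is a free commutative monoid on the classes of the indecomposable projectives $P_1,\dots,P_t$, where $P_i$ is the projective cover of the simple module $V_i$ corresponding to the factor $\nMat{D_i}{n_i}$ of $\quo R$. Writing $R_R\cong\bigoplus P_i^{c_i}$, the condition ``$M^n\cong R^n\Rightarrow M\cong R$'' is equivalent to the purely numerical statement that $n\mid c_i$ for all $i$ forces... no: it is equivalent to saying that the only solution in nonnegative integers of $n a_i = n c_i$ is $a_i=c_i$, which is automatic. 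So that reduction is \emph{always} available, and Theorem~\ref{AZ:TH:involution-transfer-weak} already gives that $\nMat R2$ has an involution of type $\sigma$ and $R$ itself has an \emph{anti-automorphism} of type $\sigma$. The real content here is upgrading that anti-automorphism on $R$ to an \emph{involution}, using (1)--(2).

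First I would pass to $\quo R=\prod_{i=1}^t\nMat{D_i}{n_i}$. An anti-automorphism $\gamma$ of $R$ of type $\sigma$ descends to an anti-automorphism $\quo\gamma$ of $\quo R$ of reduced type $\quo\sigma$; since $\quo\gamma$ must permute the simple factors according to how $\quo\sigma$ permutes the idempotents, and since $\gamma^2$ is an inner automorphism trivial on the center (the two-square reasoning), the permutation has order dividing $2$. On a factor (or pair of factors) that $\quo\gamma$ swaps nontrivially, one can always rescale $\gamma$ to make it square to the identity there — swapped pairs never obstruct. The obstruction lives entirely on the factors $\nMat{D_i}{n_i}$ that $\quo\gamma$ fixes: on such a factor, the induced anti-automorphism, after an inner twist (Theorem~\ref{AZ:TH:anti-autos-are-transposes} applied over $D_i$, or the classical Skolem--Noether-type argument), has the form $T_n\circ\delta$ for an anti-automorphism $\delta$ of $D_i$, and it can be modified to an involution precisely unless $D_i$ is a field with $\delta$ the identity (forcing $\quo\sigma|_{F_i}=\id$), $n_i$ is odd, and $2\notin\units{D_i}$ — because the usual trick $b\mapsto b\pm b^{T_n\delta}$ needs either a symmetric or alternating regular form to exist, and over a field in odd dimension with characteristic $2$ and the identity involution one has neither. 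That is exactly what conditions (1) and (2) rule out, (1) unconditionally and (2) allowing the characteristic-$2$ escape via $2\in\units{D_\ell}$ at one exceptional index.

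The mechanism for ``modify to an involution'' I would phrase in the bilinear-form language of this paper, to stay uniform with the rest: the anti-automorphism $\gamma$ of $R$ corresponds (Theorem~\ref{AZ:TH:correspondence}, with $M=R_R$ a generator) to a regular bilinear form, equivalently to a double $R$-progenerator $K$ of type $\sigma$ together with an isomorphism $R_R\xrightarrow{\ \sim\ }R^{[1]}$; and $\gamma$ can be chosen an involution iff $K$ admits an involution $\theta$ and the form can be taken $\theta$-symmetric, i.e.\ iff there is a $\theta$-symmetric regular $b\colon R\times R\to K$. Reducing mod $\Jac(R)$, $K$ gives a double $\quo R$-progenerator $\quo K$ with involution $\quo\theta$ (one builds $\quo\theta$ factor by factor, using on the swapped pairs the obvious flip and on the fixed factors the choice furnished by (1)--(2)), and by Proposition~\ref{AZ:PR:one-dual-commutes-with-scalar-ext-I} together with the projective-cover bijection for semiperfect rings, the existence of a $\quo\theta$-symmetric regular form over $\quo R$ lifts: a $\quo\theta$-symmetric regular $\quo b$ on $\quo R$ lifts to a $\theta$-symmetric $b$ on $R$ whose adjoint reduces to the (invertible) adjoint of $\quo b$, hence is invertible since $R$ is semilocal, hence $b$ is regular. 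The induced involution on $\End_R(R_R)=R$ then has type $\sigma$ and reduced type $\quo\sigma$ by the usual faithfulness-of-$K_1$ computation used in Proposition~\ref{AZ:PR:being-mor-eq-to-ring-with-anti-auto}.

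I expect the main obstacle to be the bookkeeping on the \emph{semisimple} quotient $\quo R$: showing cleanly that on each $\quo\gamma$-fixed factor $\nMat{D_i}{n_i}$ the anti-automorphism is, up to inner twist and rescaling, an honest involution unless we are in the ``field, odd size, characteristic $2$, identity on center'' bad case — and that at most one such bad case can be tolerated (index $\ell$, where $2\in\units{D_\ell}$ saves the day). The swapped-pair case and the lifting-from-$\quo R$ step are routine given the machinery already developed; the per-factor classification over a division ring, including the characteristic-$2$ subtlety, is where the hypotheses (1) and (2) are exactly consumed, and is the delicate point.
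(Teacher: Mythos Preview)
Your outline has the right shape, but there is a genuine gap concerning the involution $\theta$ on the double $R$-progenerator $K$. You obtain the anti-automorphism $\gamma$ of $R$ and correctly observe that upgrading it to an involution requires an involution $\theta$ on $K$ together with a $\theta$-symmetric regular form on $R_R$. But you then propose to ``build $\quo\theta$ factor by factor'' on $\quo K$, and in the very next clause speak of lifting a $\quo\theta$-symmetric form to a $\theta$-symmetric one over $R$ --- invoking a $\theta$ on $K$ that you have nowhere constructed. You do not explain how a factor-by-factor $\quo\theta$ on $\quo K$ lifts to an involution of $K$, and even granting such a $\theta$, ``lifting the form'' means lifting a $\quo\theta$-fixed unit $\quo u\in\units{\quo R}$ to a $\theta$-fixed $u\in R$, which is genuinely problematic when $2\notin\units{R}$: the map $\{r\in R:r^\theta=r\}\to\{\quo r\in\quo R:\quo r^{\quo\theta}=\quo r\}$ need not be surjective.

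The paper's argument avoids both issues by noting that $\theta$ is \emph{already there}: since $\alpha$ is an involution of $\nMat{R}{n}$, Theorem~\ref{AZ:TH:correspondence} equips $K_\alpha$ with an involution $\theta_\alpha$, and Theorem~\ref{AZ:TH:anti-autos-are-transposes} identifies $K_\alpha$ with the standard double module of $(R,\gamma)$. So one has $\theta$ on $K=R$ from the outset, satisfying $(a^\gamma bc)^\theta=c^\gamma b^\theta a$; this is also the actual reason $\gamma^2$ is inner (your ``two-square reasoning'' does not supply it). With $\theta$ in hand, it suffices to find $u\in\units{R}$ with $u^\theta=\pm u$, since then $r\mapsto u^{-1}r^\gamma u$ is an involution. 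The trick is to seek $x\in R$ with $x+x^\theta$ or $x-x^\theta$ invertible: one finds such $\quo x$ on $\quo R$ and lifts it \emph{arbitrarily}, because $x\pm x^\theta$ is automatically $\pm\theta$-fixed and its invertibility lifts from $\quo R$. The factor-by-factor work on $\quo R$ then uses the \emph{given} $\quo\theta$ (not a freshly built one), and the hypotheses (1)--(2) are consumed in showing that each factor furnishes \emph{both} signs --- so that a single global sign can be chosen --- with the exceptional index $\ell$ allowed to furnish only one.
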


    \begin{proof}
        \Step{1} Note first that for an $R$-module
        $M$, $M^n\cong R^n$ implies $M\cong R_R$. Indeed,
        this is clear when $R$ is semisimple, and we can reduce to this case by replacing $M$ with
        $\quo{M}:=M/M\Jac(R)$, as explained in the proof
        of Theorem~\ref{AZ:TH:R-mor-eq-to-R-op}.

        Let $\alpha$ be an involution of $\nMat{R}{n}$ as above. By Theorem~\ref{AZ:TH:anti-autos-are-transposes},
        we may assume $K_\alpha$ is the standard double $R$-module of $(R,\gamma)$ for some anti-automorphism
        $\gamma:R\to R$, necessarily of type (resp.\ reduced type) $\sigma$ (resp.\ $\quo{\sigma}$).
        Furthermore, by Theorem~\ref{AZ:TH:correspondence}, $K_\alpha$ has an involution $\theta$.
        Namely, $\theta:R\to R$ is a map satisfying
        \begin{equation}\label{AZ:EQ:theta-relation}
        (a^\gamma bc)^\theta=c^\gamma b^\theta a\qquad \forall a,b,c\in R.
        \end{equation}

        Suppose that $\theta$ fixes an invertible element $u\in \units{R}$. We claim that the map $\beta:R\to R$
        defined by $r^\beta=u^{-1}r^\gamma u$ is an involution, clearly of the same type and reduced type as $\gamma$.
        Indeed, observe that for all $r\in R$,
        $(ur)^\theta=r^\gamma u^\theta=uu^{-1}r^\gamma u=ur^\beta$. Therefore,
        $ur =(ur)^{\theta\theta}=(ur^\beta)^\theta=ur^{\beta\beta}$, which implies $r=r^{\beta\beta}$.

        We are thus reduced to show that there exists $u\in\units{R}$ with $u^\theta=u$. Likewise, we are also done if
        we can find $u\in\units{R}$ with $u^\theta=-u$; simply replace $\theta$ with $-\theta$.

        \Step{2}
        Let $J=\Jac(R)$. Clearly $J^\gamma= J$, hence $\gamma$ induces an anti-automorphism on $\quo{R}$.
        We claim that $J^\theta\subseteq J$. Indeed, $J^\theta$ is an ideal
        by \eqref{AZ:EQ:theta-relation} and the bijectivity of $\gamma$, so  is enough to show that $1+J^\theta=(1^\theta+J)^\theta$
        consists of left invertible elements.
        Indeed, $1=(1\cdot 1)^{\theta\theta}=(1\cdot 1^\theta)^\theta=(1^\theta)^\gamma\cdot 1^\theta$, hence $1^\theta$ is left invertible, and
        a simple variation shows that $1^\theta$ is also right invertible.
        Thus, $1^\theta+J$ consists of invertible elements.
        Now, $\theta$ sends right invertible elements to left invertible elements since  $sr=1$ implies
        $r^\gamma s^\theta =(sr)^\theta=1^\theta\in\units{R}$. Therefore, $(1^\theta+J)^\theta$ consists of left invertible
        elements, as required.

        Let $\quo{\gamma},\quo{\theta}$ be the maps induced by $\gamma,\theta$ on $\quo{R}=R/J$.
        In the following steps, we shall establish the existence of $x\in R$ with $x+x^\theta\in\units{R}$ or $x-x^\theta\in\units{R}$.
        This would finish the proof since we can then take $u$ of Step~1 to be $x\pm x^\theta$. It is enough to prove
        the existence of $x$ for $\quo{R}$, $\quo{\gamma}$, $\quo{\theta}$ and then lift it arbitrarily to $R$. Thus, we may henceforth
        assume $R$ is semisimple.

        \Step{3}
        Let $C=\Cent(R)$. Then $C^\gamma=C$. Moreover, for all $c\in C$,
        \[c=c^{\theta\theta}=((1c)^\theta)^\theta=(c^\gamma 1^\theta)^\theta=(1^\theta c^\gamma)^\theta=
        c^{\gamma\gamma}1^{\theta\theta}=c^{\gamma\gamma}\ ,\]
        so $\gamma|_C$ is an involution. Let $\{e_1,\dots,e_t\}$ be the primitive idempotents of $C$.
        As $R$ is semisimple, we may assume that $e_iRe_i=e_iR=\nMat{D_i}{n_i}$ (with $D_i$, $n_i$ as in the theorem).
        Now, $\gamma$ permutes $\{e_1,\dots,e_t\}$ and the orbits have size $1$ or $2$.
        Choose representatives $\{e_i\}_{i\in I}$ for the orbits of $\gamma$ such that $1\in I$,
        and let $f_i=e_i+e_i^\gamma$ if $e_i\neq e_i^\gamma$ and $f_i=e_i$ otherwise.
        Then $\{f_i\}_{i\in I}$ are pairwise orthogonal, $\sum_{i\in I} f_i=1$ and $f_i^\gamma=f_i$ for all $i$.

        Observe that if $f\in R$ satisfies $f^\gamma=f$, then $(fRf)^\theta=f^{\gamma}R^\theta f^{\gamma^{-1}}=fRf$. In particular,
        $\theta$ takes $f_iRf_i=f_iR$ into itself for all $i$.
        As $R=\prod f_iR$, it is enough to find
        $x_i,x'_i\in f_iR$ such that $x_i+x_i^\theta,x'_i-x'^\theta_i\in \units{(f_iR)}$
        for all $i\neq\ell$, and at least one of $x_\ell+x_\ell^\theta$, $x'_\ell-x'^\theta_\ell$ is a unit of $f_\ell R$.
        (Indeed, take $x=\sum_{i\in I} x_i$ if $x_\ell+x_\ell^\theta\in\units{(f_1R)}$
        and take $x=\sum_{i\in I} x'_i$ if $x'_\ell-x'^\theta_\ell\in\units{(f_1R)}$.)
        We now split into cases.

        \Step{4} Suppose $f_i=e_i+e_i^\gamma$ with $e_i\neq e_i^\gamma$.
        Then $f_iR= e_iR\times e_i^\gamma R$.
        Observe that $e_i^\theta=(e_ie_i)^\theta=e_i^{\gamma}e_i^\theta\in e_i^\gamma R$.
        We claim that $e_i^\theta$ is invertible in $e_i^\gamma R$.
        Indeed, we have
        \begin{align*}
        e_i^\gamma&=e_i^{\gamma\theta\theta}=(e_i^\gamma e_i^\gamma)^{\theta\theta}=
        (e_i^{\gamma\gamma}e_i^{\gamma\theta})^\theta=e_i^{\gamma\theta\gamma}e_i^{\gamma\gamma\theta}=
        e_i^{\gamma\theta\gamma}e_i^\theta\ ,\\
        e_i^\gamma&=e_i^{\gamma^{-1}\theta\theta}=(e_i^{\gamma^{-1}}e_i^{\gamma^{-1}})^{\theta\theta}=
        (e_i^{\gamma^{-1}\theta} e_i^{\gamma^{-1}\gamma^{-1}})^\theta=
        e_i^{\gamma^{-1}\gamma^{-1}\theta}e_i^{\gamma^{-1}\theta\gamma^{-1}}=e_i^\theta e_i^{\gamma^{-1}\theta\gamma^{-1}}\ ,
        \end{align*}
        hence $e_i^\theta$ is left and right invertible in $e_i^\gamma R$ (we used \eqref{AZ:EQ:theta-relation}
        and the fact that $e_i^{\gamma\gamma}=e_i$ repeatedly).
        Since $e_i$ is invertible in $e_iR$, it follows that $e_i+e_i^\theta$ and $e_i-e_i^\theta$ are invertible
        in $f_iR= e_iR\times e_i^\gamma R$. We may therefore take $x_i=x'_i=e_i$.

        \Step{5} Assume  that $f_i=e_i$, and let $S:=f_iR=e_iR=\nMat{D_i}{n_i}$.
        By Theorem~\ref{AZ:TH:anti-autos-are-transposes}, there is an anti-automorphism $\beta:D_i\to D_i$
        and $v\in \units{S}$ such that $v^{-1}s^\gamma v=s^{T_{n_i}\beta}$ for all $s\in S$.
        Let $\gamma'=T_{n_i}\beta$ and define $\theta':S\to S$ by $s^{\theta'}=v^{-1}s^\theta v^{\gamma^{-1}}$.
        It is straightforward
        to check that $(a^{\gamma'}bc)^{\theta'}=c^{\gamma'}b^{\theta'}a$ and $a^{\theta'\theta'}=a$
        for all $a,b,c\in S$. Furthermore, for all $x\in S$, $x\pm x^{\theta'}=v^{-1}(vx+(vx)^\theta)$.
        Therefore, without loss of generality, we may replace $\gamma$, $\theta$ with $\gamma'$, $\theta'$.

        Let $\{e_{jk}\}$ be the standard $D_i$-basis of $S=\nMat{D_i}{n_i}$. Then  $\gamma=T_{n_i}\beta$ implies
        $e_{jk}^{\gamma}=e_{kj}$. Therefore, $(e_{jj}Se_{kk})^\theta=e_{kk}^{\gamma}S^\theta e_{jj}^{\gamma^{-1}}=e_{kk}Se_{jj}$, namely,
        $\theta$ takes $e_{jk}D_i$ to $e_{kj}D_i$.
        Now, if $n_i$ is even,  we can take $x_i=x'_i=\sum_{j=1}^{n_i/2}e_{(2j)(2j-1)}$. Indeed, both $x_i+x_i^\theta$
        and $x'_i-x'^\theta_i$ have matrices of the form
        \[
        \left[\begin{smallmatrix}
        0 & * \\
        * & 0 \\
          &   & 0 & * \\
          &   & * & 0 \\
          &   &   &   & \ddots
        \end{smallmatrix}\right]
        \]
        with all the $*$-s being nonzero, so they are invertible.

        Next, embed $D:=D_i$ in $S$ by sending $d\in D$ to the diagonal matrix with diagonal entries $(d,\dots,d)$. Since $\gamma=T_{n_i}\beta$,
        the anti-automorphism $\beta$ of $D$ coincides with $\gamma|_D$. We  claim that $\theta$
        sends $D$ into itself. Indeed, as a subring of $S$,
        $D$ is the centralizer of $\{e_{jk}\}$, and for all $d\in D$ and $1\leq j,k\leq n_i$, we have
        \[
        e_{jk}d^\theta=e_{kj}^\gamma d^\theta=(de_{kj})^\theta=(e_{kj}d)^\theta=d^\theta e_{kj}^{\gamma^{-1}}=d^\theta e_{jk}\ .
        \] 
        It is therefore enough to find $x_i,x'_i\in D$ with $x_i+x_i^\theta,x'_i-x'^\theta_i\in \units{D}$.
        Since $D$ is a division ring,
        the non-existence of $x_i$ or $x'_i$ forces $\theta=\pm\id_D$, which in turn implies $\beta=\gamma|_D=\id_D$,
        because then
        $\pm d=d^\theta=(1d)^\theta =d^\gamma1^\theta=\pm d^\gamma$. This is impossible in case $D_i$ is not a field
        or $\quo{\sigma}$ (which is the restriction of $\gamma$ to $\Cent(R)$) does not fix $F_i=\Cent(S)=\Cent(D)$ pointwise.

        In case $i\neq \ell$, either $n_i$ is even, or $D_i$ is not a field, or $\quo{\sigma}$ does not fix $F_i$ element-wise,
        so the existence of both $x_i$ and $x'_i$ is guaranteed by
        the previous paragraphs. In case $i=\ell$, $n_i$ is odd,  $D_i$ is a field, and $\quo{\sigma}$ fixes $F_j$ element-wise,
        we must have $\Char D_i\neq 2$. Returning to the
        setting of the previous
        paragraph, this means that for any $0\neq y\in D$, at least one of $y+y^\theta$, $y-y^\theta$ is nonzero (for their sum
        is $2y\neq 0$). Thus, either $x_1$ or $x'_1$ exists, as required.
    \end{proof}

    \begin{cor}\label{AZ:CR:basic-ring-involution}
        Let $R$ be a semiperfect ring having an involution of type $\sigma$ and reduced type $\quo{\sigma}$.
        Write $\quo{R}:=R/\Jac(R)\cong\prod_{i=1}^t\nMat{D_i}{n_i}$ as in Theorem~\ref{AZ:TH:involution-transfer}
        and assume that there is $1\leq \ell\leq t$ such that for all $ i\in\{1,\dots,t\}\setminus\{\ell\}$ we have
        \begin{enumerate}
            \item[(1$'$)] $D_i$ is not a field, or $\quo{\sigma}|_{F_i}\neq\id_{F_i}$,
        \end{enumerate}
        and for the index $\ell$  we have
        \begin{enumerate}
            \item[(2$'$)]
            $D_{\ell}$ is not a field, or $\quo{\sigma}|_{F_{\ell}}\neq\id_{F_\ell}$, or $2\in \units{D_{\ell}}$.
        \end{enumerate}
        Then the basic ring of $R$ has an involution of  type $\sigma$ and reduced type $\quo{\sigma}$.
    \end{cor}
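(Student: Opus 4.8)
The plan is to obtain the involution on the basic ring $S$ by first transporting the given involution to $\nMat{S}{2}$ and then descending it via Theorem~\ref{AZ:TH:involution-transfer}. Take $S=\End_R(M)$ with $M=P_1\oplus\dots\oplus P_t$ the direct sum of the indecomposable projective right $R$-modules, so that $M$ is an $(S,R)$-progenerator, $S$ is semilocal (being Morita equivalent to $R$), and $S\CMoreq{\Cent(R)}R$. Proposition~\ref{AZ:PR:involutions-pass-to-the-basic-ring} turns the hypothesis that $R$ has an involution of type $\sigma$ into the statement that $\nMat{S}{2}$ has an involution of type $\sigma$; and by the remark following Theorem~\ref{AZ:TH:involution-transfer}, which upgrades every type statement to a reduced-type statement, this involution of $\nMat{S}{2}$ may be taken to have reduced type $\quo{\sigma}$ as well.

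Next I would identify $\quo{S}:=S/\Jac(S)$. Applying Proposition~\ref{AZ:PR:one-dual-commutes-with-scalar-ext-I} to the $(S,R)$-progenerator $M$ shows that $\quo{S}$ is semisimple and Morita equivalent to $\quo{R}=R/\Jac(R)\cong\prod_{i=1}^t\nMat{D_i}{n_i}$; since $S$ is basic, so is $\quo{S}$, and hence $\quo{S}\cong\prod_{i=1}^t\nMat{D_i}{1}=\prod_{i=1}^tD_i$. In particular $\Cent(\quo{S})\cong\prod_{i=1}^tF_i=\quo{C}$, and by the convention fixed just before Theorem~\ref{AZ:TH:involution-transfer} this is exactly the identification with respect to which reduced types attached to $S$ (and to $\nMat{S}{2}$) are computed. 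Thus, written in the form required by Theorem~\ref{AZ:TH:involution-transfer}, the Wedderburn decomposition of $\quo{S}$ has division parts $D_1,\dots,D_t$, all matrix sizes equal to $1$, and corresponding centers $F_1,\dots,F_t$.

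I would then apply Theorem~\ref{AZ:TH:involution-transfer} to the semilocal ring $S$ with $n=2$. Its hypothesis is met by the first step, as $\nMat{S}{2}$ has an involution of type $\sigma$ and reduced type $\quo{\sigma}$. Since every matrix size occurring in $\quo{S}$ equals $1$, which is odd, the alternative ``$n_i$ is even'' in conditions~(1) and~(2) of Theorem~\ref{AZ:TH:involution-transfer} is never available, so those two conditions become verbatim the hypotheses~(1$'$) and~(2$'$) of the corollary. Theorem~\ref{AZ:TH:involution-transfer} then yields an involution of $S$ of type $\sigma$ and reduced type $\quo{\sigma}$, which is the claim.

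The only place where genuine care is needed is the compatibility bookkeeping of the first two steps: checking that the type and the reduced type of the involution are preserved under the Morita equivalence $S\CMoreq{\Cent(R)}R$ and under Proposition~\ref{AZ:PR:involutions-pass-to-the-basic-ring}, and that the isomorphism $\Cent(\quo{S})\cong\quo{C}$ agrees with the one built into the statement of Theorem~\ref{AZ:TH:involution-transfer}. Given the results already established this is routine. Once it is in place the deduction is forced, since passing from $R$ to the matrix ring over its basic ring turns each Wedderburn block of the semisimple quotient into a division ring, deleting the ``$n_i$ even'' escape clause and leaving precisely conditions~(1$'$) and~(2$'$).
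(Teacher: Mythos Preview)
Your proposal is correct and follows essentially the same route as the paper's proof: apply Proposition~\ref{AZ:PR:involutions-pass-to-the-basic-ring} to obtain an involution on $\nMat{S}{2}$, identify $\quo{S}\cong\prod_i D_i$ via Proposition~\ref{AZ:PR:one-dual-commutes-with-scalar-ext-I}, and then invoke Theorem~\ref{AZ:TH:involution-transfer} with $n=2$, where the vanishing of the ``$n_i$ even'' alternative reduces conditions (1),(2) to (1$'$),(2$'$). Your extra care about the reduced-type bookkeeping is appropriate and matches what the paper handles by the blanket remark preceding Theorem~\ref{AZ:TH:involution-transfer}.
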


    \begin{proof}
        Let $S$ be the basic ring of $R$. By Proposition~\ref{AZ:PR:involutions-pass-to-the-basic-ring}, $\nMat{S}{2}$
        has an involution of type $\sigma$
        and reduced type $\quo{\sigma}$. Using the description $S$ given in section~\ref{section:semiperfect}
        and Proposition~\ref{AZ:PR:one-dual-commutes-with-scalar-ext-I}, it is easy to see that $\quo{S}$
        is the basic ring of $\quo{R}$. That is, $\quo{S}\cong \prod_{i=1}^tD_i$. Therefore, $S$ satisfies
        the conditions of
        Theorem~\ref{AZ:TH:involution-transfer}, and hence $S$ has an involution of type $\sigma$ and reduced type $\quo{\sigma}$.
    \end{proof}

    \begin{cor}
        Let $R$ be a finite-dimensional algebra over a field $F$. Suppose $\nMat{R}{n}$ admits an involution
        $\alpha$ sending $F=F\cdot 1_R$ into itself and satisfying $\alpha|_F\neq \id_F$. Then $R$ has an involution of
        the same type and reduced type as $\alpha$. The same applies to the basic $F$-algebra that is Morita equivalent
        to $R$.
    \end{cor}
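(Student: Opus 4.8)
The plan is to derive this directly from Theorem~\ref{AZ:TH:involution-transfer} and Corollary~\ref{AZ:CR:basic-ring-involution}. A finite-dimensional $F$-algebra is artinian, hence semiperfect and in particular semilocal, and $\nMat{R}{n}$ already carries the involution $\alpha$; so, writing $C=\Cent(R)$, $\sigma=\alpha|_C$ for the type of $\alpha$, $\quo{\sigma}\in\Aut(\quo{C})$ for its reduced type, $\quo{R}=R/\Jac(R)\cong\prod_{i=1}^{t}\nMat{D_i}{n_i}$ and $\quo{C}=\Cent(\quo{R})=\prod_{i=1}^{t}F_i$ with $F_i=\Cent(D_i)$, the only thing to verify is the numerical hypotheses~(1),~(2) of Theorem~\ref{AZ:TH:involution-transfer} and~(1$'$),~(2$'$) of Corollary~\ref{AZ:CR:basic-ring-involution}. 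I claim it is enough to prove that $\quo{\sigma}|_{F_i}\neq\id_{F_i}$ for \emph{every} $i\in\{1,\dots,t\}$: granting this, all of~(1),~(2),~(1$'$),~(2$'$) hold (take $\ell=1$, say), the hypothesis ``$M^{n}\cong R^{n}\Rightarrow M\cong R_{R}$'' of Theorem~\ref{AZ:TH:involution-transfer} is automatic since $R$ is semilocal, and the two cited results then produce an involution of type $\sigma$ and reduced type $\quo{\sigma}$ on $R$ and on the basic $F$-algebra Morita equivalent to it, which is exactly the assertion.

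So the single real point is the claim. First I would note that the reduction map $C\to\quo{C}$ intertwines $\sigma$ with $\quo{\sigma}$: this is immediate from the definitions of type and reduced type, since $\alpha$ preserves $\Jac(\nMat{R}{n})=\nMat{\Jac(R)}{n}$ and hence induces an anti-automorphism on $\quo{\nMat{R}{n}}$ restricting to $\quo{\sigma}$ on the center. Because $F$ is a field, $F\cap\Jac(R)=0$, so $F$ embeds into $\quo{C}=\prod_iF_i$; as $F\to\quo{C}$ is a unital homomorphism of a field into a finite product of fields, each coordinate map $\iota_i\colon F\to F_i$ is injective, and $\quo{\sigma}$ permutes the primitive idempotents $e_1,\dots,e_t$ of $\quo{C}$ by some permutation $\pi$, the intertwining forcing $\quo{\sigma}\circ\iota_i=\iota_{\pi(i)}\circ(\alpha|_F)$ for all $i$. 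Now fix $i$. If $\pi(i)\neq i$ then $\quo{\sigma}$ sends $e_i\in F_i$ to $e_{\pi(i)}\notin F_i$, so $\quo{\sigma}|_{F_i}\neq\id_{F_i}$ trivially. If $\pi(i)=i$, choose $x\in F$ with $\alpha(x)\neq x$ (possible since $\alpha|_F\neq\id_F$); then $\quo{\sigma}(\iota_i(x))=\iota_i(\alpha(x))\neq\iota_i(x)$ by injectivity of $\iota_i$, so again $\quo{\sigma}|_{F_i}\neq\id_{F_i}$. This proves the claim.

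With the claim in hand the proof concludes as in the first paragraph: Theorem~\ref{AZ:TH:involution-transfer} gives an involution of $R$ of type $\sigma$ and reduced type $\quo{\sigma}$; then $R$ is a semiperfect ring carrying such an involution and satisfying~(1$'$),~(2$'$), so Corollary~\ref{AZ:CR:basic-ring-involution} gives the same for the basic ring of $R$, which, being a $\Cent(R)$-algebra and hence an $F$-algebra, is the basic $F$-algebra Morita equivalent to $R$. The only step requiring any thought is the claim $\quo{\sigma}|_{F_i}\neq\id_{F_i}$ for all $i$ — essentially the observation that an involution fixing $F$-structure nontrivially cannot become trivial on any of the central factors $F_i$, because $F$ sits diagonally inside $\prod_iF_i$; everything else is bookkeeping with machinery already established, so I anticipate no genuine obstacle.
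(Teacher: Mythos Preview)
Your proof is correct and follows essentially the same approach as the paper: both reduce to showing $\quo{\sigma}|_{F_i}\neq\id_{F_i}$ for every $i$ and then invoke Theorem~\ref{AZ:TH:involution-transfer} and Corollary~\ref{AZ:CR:basic-ring-involution}. The paper's argument is a slightly slicker variant of your claim (it picks $a\in F$ with $a^\alpha\neq a$ and, for any $0\neq x\in F_i$, observes that $\quo{\sigma}$ cannot fix both $x$ and $ax$, which implicitly handles both of your cases at once), but the content is the same.
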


    \begin{proof}
        Let $a\in F$ be an element with $a^\alpha- a\neq 0$.
        Write $\quo{C}=\Cent(R/\Jac(R))=F_1\times\dots \times F_n$ with each $F_i$
        a field, and let $\quo{\sigma}\in\Aut(\quo{C})$ be the reduced type of $\alpha$.
        Pick some $0\neq x\in F_i$ and view it as an element of $\quo{C}$. If both $\quo{\sigma}(x)=x$,
        and $\quo{\sigma}(ax)=ax$, then
        $ax={\quo{\sigma}}(ax)=\quo{\sigma}(a) \quo{\sigma}(x)=a^\alpha x$, which implies $(a-a^\alpha)x=0$,
        a contradiction. Thus, $\quo{\sigma}$ does not fix any of the fields $F_i$ element-wise, so we are done
        by Theorems~\ref{AZ:TH:involution-transfer} and Corollary~\ref{AZ:CR:basic-ring-involution}.
    \end{proof}

    \begin{remark}
        In general, if $R$, $\theta$, $\gamma$ are as in the proof of Theorem~\ref{AZ:TH:involution-transfer},
        then there may not exist  $u\in \units{R}$ with $u=u^\theta$ or $u=-u^\theta$. For example,
        take $R=F\times F$ with $F$ a field of characteristic not $2$, let $\alpha=\id_R$, and let $\theta$ be defined by
        $(x,y)^\theta=(x,-y)$. This implies that the proof of  Theorem~\ref{AZ:TH:involution-transfer}
        cannot be applied to arbitrary semilocal rings. In fact, we believe that
        there exist semilocal rings $R$ without involution such that $\nMat{R}{2}$ has an involution.
    \end{remark}

    \begin{remark}
        Given $R$, the datum of $\theta$ and $\gamma$ satisfying \eqref{AZ:EQ:theta-relation}
        is equivalent to the datum of an \emph{anti-structure}, a notion introduced by
        Wall  \cite[pp.\ 244]{Wall70} to define quadratic
        forms over rings. Recall that an anti-structure on $R$ consists
        of an anti-automorphism $\gamma:R\to R$ and an element $v\in \units{R}$ such that $v^\gamma=v^{-1}$
        and $r^{\gamma\gamma} =vrv^{-1}$ for all $r\in R$.
        Fixing $\gamma$, there is a one-to-one correspondence between the possible $\theta$-s
        and $v$-s given by $v=\theta(1)$ and $r^\theta=r^\gamma v=vr^{\gamma^{-1}}$.
        Therefore, the proof of Theorem~\ref{AZ:TH:involution-transfer-weak} shows that if $R$ has an anti-structure, then $\nMat{R}{2}$
        has an involution. Unfolding the construction, this involution is given by
        \[
        \SMatII{a}{b}{c}{d}\mapsto \SMatII{d^\gamma}{b^\theta }{v^{-1} c^\theta v^{-1}}{a^{\gamma^{-1}}}=
        \SMatII{d^\gamma}{ b^\gamma v}{v^{-1}c^\gamma}{a^{\gamma^{-1}}}
        \]
        This trick was noted by several authors in the past (see  \cite[pp.\ 532]{Sa78}, for instance).
        The proof of theorem~\ref{AZ:TH:being-mor-eq-to-ring-with-inv} can be viewed as a generalization of it.

        This means that the question of whether there is a semilocal ring $R$ without involution such that $\nMat{R}{2}$
        has an involution is equivalent to whether there is a semilocal ring without an involution admitting an anti-structure.
        In fact, we were unable to find in the literature an example of a \emph{general} ring with these properties.
    \end{remark}

\section{Counterexamples}
\label{section:examples}

    We now demonstrate that there are  ``nice'' rings which are not Morita equivalent to rings with
    an involution (of any kind), but still admit an anti-auto\-mor\-phism. In the first example, that auto\-mor\-phism
    fixes the center element-wise.

    \begin{example} \label{FORM:EX:two-nderives-one}
        Recall that a \emph{poset} consists of a finite set $I$ equipped with a partial order
        which we denote by $\leq$.
        For a field $F$ and a poset $I$, the \emph{incidence algebra} $F(I)$ is defined to be
        the subalgebra of the $I$-indexed matrices over $F$
        spanned as an $F$-vector space by the matrix units $\{e_{ij}\where i,j\in I,\ i\leq j\}$.
        If $I$ is not the disjoint union of two non-comparable subsets, then $\Cent(F(I))=F$. In
        this case, we say $I$ is \emph{connected}. The algebra $F(I)$ is well-known to be basic.

        Let $R$ be a ring that is Morita equivalent to $F(I)$. The poset $I$ can be recovered
        from $R$, up to isomorphism, as follows:
        Let $E$ denote the set of primitive idempotents in $R$.
        Then $\units{R}$ acts by conjugation on $E$. Define ${I}_R$ to be the set of equivalence classes in $E$, and
        for $i,j\in {I}_R$, let $i\leq j$ if and only if $eRf\neq 0$ for some (and hence any) $e\in i$ and $f\in j$.
        To see that ${I}_R$ is indeed isomorphic to $I$, observe that there is an isomorphism
        between $I$ and the isomorphism classes of indecomposable projective $R$-modules
        via $i\mapsto P_i:=eR$ ($e\in i$), and $i\leq j$ if and only if $\Hom_R(P_j,P_i)\neq 0$.
        This shows that ${I}_R$ can be determined from $\rMod{R}$, hence $I_R\cong I_{F(I)}$. Finally, $I_{F(I)}$ is
        easily seen to be isomorphic to $I$ (via sending the class of $e_{ii}\in F(I)$ to $i$).
        The construction of $I_R$ implies that any anti-automorphism (resp.\ involution) of $R$ induces an anti-automorphism
        (resp.\ involution)
        on ${I}\cong I_R$.

        Suppose now that $I$ is connected, has an anti-automorphism, but admits no involution. Then the previous paragraphs imply that
        $\Cent(F(I))=F$, and any ring that is Morita equivalent to $F(I)$ does not have an involution. On the other
        hand,
        the anti-automorphism of $I$ gives rise to an anti-automorphism of $F(I)$ of type $\id_F$.
        An example of such a poset  was given in \cite{Sch74} by Scharlau (for other purposes); $I$ is
        the $12$-element poset whose Hasse
        diagram is:
        \[
        \xymatrix{
                              & \bullet \ar[r] & \bullet        & \\
        \bullet \ar[ur]\ar[d] & \bullet \ar[u] & \bullet        & \bullet \ar[ul]\ar[l] \\
        \bullet \ar[dr]\ar[r] & \bullet        & \bullet \ar[d] & \bullet \ar[u]\ar[dl] \\
                              & \bullet        & \bullet \ar[l]
        }
        \]
        (Using Scharlau's words, it is ``the simplest example I could find''.) An anti-automorphism
        of $I$ is given by rotating the diagram  ninety degrees clockwise.

        Involutions and automorphisms of incidence algebras are well-understood in general; we refer the reader
        to the survey \cite{BruLew11} and the references therein.
    \end{example}

     \begin{example}\label{AZ:EX:div-ring-without-involution}
        Various f.d.\ division algebras admitting an anti-automorphism
        but no involution were constructed in \cite{MoranSethTig05}.
        By Proposition~\ref{AZ:PR:inv-over-local-rings}, none of these algebras is Morita equivalent to a ring with involution.
        Note that by Albert's Theorem (\cite[Th.\ 10.19]{Al61StructureOfAlgs}),  anti-automorphisms
        of such division algebras cannot fix the center pointwise.
    \end{example}

\section{Azumaya Algebras}
\label{section:azumaya}

    The rest of this paper concerns Saltman's Theorem about Azumaya algebras
    with involution (see section~\ref{section:overview}) and the construction of Azumaya algebras without involution.
    As preparation, we now briefly recall Azumaya algebras and several facts about them to be used later.
    We refer
    the reader to \cite{Sa99} and \cite{DeMeyIngr71SeparableAlgebras} for
    an extensive discussion and proofs. Throughout, $C$ is a commutative ring and, unless specified otherwise, all tensor
    products are taken over $C$.

\subsection{Azumaya Algebras}

    A  $C$-algebra $A$ is called \emph{Azumaya} if $A$ is a progenerator as a $C$-module and the standard
    map $\Psi:A\otimes A^\op\to \End_C(A)$ given by $\Psi(a\otimes b^\op)(x)=axb$ is an isomorphism. When $C$ is a
    field, being Azumaya is equivalent to being simple and central, so Azumaya algebras are a generalization of central simple algebras.

    \smallskip

    Let $A$, $B$ be Azumaya $C$-algebras. The following facts are well-known:
    \begin{enumerate}
        \item $\Cent(A)=C$ and $\Cent_{A\otimes B}(C\otimes B)=A\otimes C$.
        \item $A\otimes B$ and $A^\op$ are Azumaya $C$-algebras.
        \item If $\psi:C\to C'$ is a commutative ring homomorphism,
        then $A\otimes C'$ is an Azumaya $C'$-algebra ($C'$ is viewed as a $C$-algebra via $\psi$).
        \item For every $C$-progenerator $P$, $\End_C(P)$ is an Azumaya $C$-algebra.
        \item If $B$ is a subalgebra of $A$, then
        $B':=\Cent_A(B)$ is Azumaya (over $C$), $B=\Cent_A(B')$ and $B\otimes B'\cong A$ via $b\otimes b'\mapsto bb'$.
    \end{enumerate}

    For every $M\in\rproj{C}$, we define $\rank(M)=\rank_C(M)$ to be the function $\Spec(C)\to \Z$ sending
    a prime ideal $P$ to $\dim_{k_P}(M\otimes k_P)$ where $k_P$ is the fraction field of $C/P$.
    We write $\rank(M)=n$ to denote that $\rank(M)(P)=n$ for all $P\in\Spec(C)$. For example, when $C$
    is a field, $\rank(M)=\dim_C(M)$.
    More generally, $\rank(M)$ is constant when $\Spec(C)$ is connected (as a topological space),
    or equivalently, when $C$ does not decompose as product of two nonzero rings.
    In fact, we can always write $C=C_1^{(M)}\times\dots\times C_t^{(M)}$ such that
    $\rank(M\otimes C_i^{(M)})$ is constant. (This follows from the well-known facts
    that $\rank(M):\Spec(C)\to \Z$ is continuous,
    and $\Spec(C)$ is a compact topological space.)
    Lastly, observe that if $\psi:C\to C'$ is a commutative ring homomorphism and $P'\in\Spec(C')$,
    then $\rank_{C'}(M\otimes C')(P')=\rank_{C}(M)(P)$ where $P=\psi^{-1}(P')$. This allows us to extend scalars
    when computing ranks.

    \begin{prp}\label{AZ:PR:hom-between-azumaya-algs}
        Let $A$ and $B$ be Azumaya $C$-algebras and let $\vphi:A\to B$ be a homorphism of $C$-algebras.
        Then $\vphi$ is injective. If moreover $\rank(A)=\rank(B)$, then $\vphi$ is an isomorphism.
    \end{prp}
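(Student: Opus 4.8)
The plan is to establish injectivity of $\varphi$ in general, and then to upgrade it to surjectivity under the rank hypothesis by splitting $B$ as a tensor product over the image of $A$.

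\emph{Injectivity.} The kernel $\ker\varphi$ is a two-sided ideal of $A$. I would invoke the standard description of ideals of an Azumaya algebra (see \cite{DeMeyIngr71SeparableAlgebras}): the assignment $\mathfrak{a}\mapsto\mathfrak{a}A$ is a bijection from the ideals of $C$ onto the two-sided ideals of $A$, inverse to $I\mapsto I\cap C$, where $C$ is identified with $C\cdot 1_A=\Cent(A)$. Hence $\ker\varphi=\mathfrak{a}A$ with $\mathfrak{a}=\ker\varphi\cap C$. If $c\in\mathfrak{a}$ then $c\cdot 1_B=\varphi(c\cdot 1_A)=0$; but $c\mapsto c\cdot 1_B$ is the isomorphism $C\xrightarrow{\sim}\Cent(B)$ (fact (1) applied to $B$, since Azumaya algebras are central), so $c=0$. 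Thus $\mathfrak{a}=0$ and $\ker\varphi=0$.

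\emph{Surjectivity.} Assume now $\rank(A)=\rank(B)$. By the previous paragraph $A':=\varphi(A)$ is a $C$-subalgebra of $B$ isomorphic to $A$, and $1_B=\varphi(1_A)\in A'$. By fact (5), $A'':=\Cent_B(A')$ is an Azumaya $C$-algebra and multiplication induces an isomorphism of $C$-algebras $A'\otimes_C A''\xrightarrow{\sim}B$. Comparing ranks of the underlying $C$-modules, and using that $(M\otimes_C N)\otimes_C k_P\cong(M\otimes_C k_P)\otimes_{k_P}(N\otimes_C k_P)$ gives $\rank(M\otimes_C N)=\rank(M)\cdot\rank(N)$, we obtain $\rank(A')\cdot\rank(A'')=\rank(B)=\rank(A)=\rank(A')$. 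Since $A$, and hence $A'$, is a generator as a $C$-module, $A'\otimes_C k_P\ne 0$, so $\rank(A')(P)\ge 1$ for every $P\in\Spec C$; cancelling, $\rank(A'')=1$.

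It remains to observe that an Azumaya $C$-algebra $A''$ of rank $1$ is isomorphic to $C$. Granting this, $B\cong A'\otimes_C A''\cong A'\otimes_C C=A'=\varphi(A)$, which together with injectivity gives that $\varphi$ is an isomorphism. For the rank-$1$ claim: the structure map $\iota\colon C\to A''$ is injective by fact (1), and its cokernel vanishes once we check, for each maximal ideal $\mathfrak{m}$ of $C$, that $\iota_{\mathfrak{m}}\colon C_{\mathfrak{m}}\to A''_{\mathfrak{m}}$ is onto. There $A''_{\mathfrak{m}}$ is a rank-$1$ projective, hence free, $C_{\mathfrak{m}}$-module, say with basis $x$; writing $1_{A''}=cx$ and $x^2=dx$ and using $x=1_{A''}\,x=cx^2=cd\,x$ together with the freeness of $C_{\mathfrak{m}}x$ forces $cd=1$, so $c\in C_{\mathfrak{m}}^{\times}$ and $1_{A''}$ is itself a $C_{\mathfrak{m}}$-basis, whence $\iota_{\mathfrak{m}}$ is surjective. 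I expect the only real work to be bookkeeping with these standard structural properties of Azumaya algebras — chiefly the ideal correspondence used for injectivity and the reduction of the rank-$1$ case to local freeness; there are no estimates or limiting arguments involved.
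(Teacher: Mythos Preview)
The paper states this proposition without proof, deferring to the standard references \cite{Sa99} and \cite{DeMeyIngr71SeparableAlgebras} mentioned at the start of the section; so there is no in-paper argument to compare against.

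Your proof is correct and follows the standard route. Two minor comments on presentation: in the surjectivity step, once you know $A''\cong C$ via the structure map, the point is that $A''=\Cent_B(A')$ literally equals $C\cdot 1_B$ inside $B$, so the multiplication isomorphism $A'\otimes_C A''\to B$ has image $A'\cdot(C\cdot 1_B)=A'$, forcing $A'=B$; your chain ``$B\cong A'\otimes_C A''\cong A'$'' obscures this slightly by mixing abstract isomorphisms with the concrete inclusion $A'\subseteq B$. Also, the rank-$1$ argument can be shortened: an Azumaya algebra of rank $1$ has $C\hookrightarrow A''$ with $A''/C$ a finitely generated $C$-module vanishing at every residue field, hence zero by Nakayama --- your explicit computation with $cx$ and $dx$ is fine but unnecessary.
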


    Two Azumaya $C$-algebras $A,B$ are said to be \emph{Brauer equivalent}, denoted \mbox{$A\Breq B$}, if
    there are $C$-progenerators $P,Q$ such that
    \[
    A\otimes \End_C(P)\cong B\otimes \End_C(Q)
    \]
    as $C$-algebras.
    The \emph{Brauer class} of $A$, denoted $[A]$, is the collection of Azumaya algebras which are Brauer equivalent
    to $A$. The \emph{Brauer group} of $C$, denoted $\Br(C)$, is the set of
    all Brauer equivalence classes endowed with the group operation $[A]\otimes [B]=[A\otimes B]$.
    The unit element of $\Br(C)$ is the class of $C$, namely, the class of Azumaya algebras which are isomorphic to
    $\End_C(P)$ for some progenerator $P$. The inverse of $[A]$ is $[A^\op]$.

    The following theorem, which is essentially due to Bass, presents an alternative definition of the Brauer equivalence.

    \begin{thm}[Bass]\label{AZ:TH:Bass-criterion}
        Let $A,B$ be two Azumaya $C$-algebras. Then $A\Breq B$ $\iff$ $A\CMoreq{C} B$, i.e.\ there exists an $(A,B)$-progenerator
        $P$ such that $cp=pc$ for all $p\in P$ and $c\in C$.
    \end{thm}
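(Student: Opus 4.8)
The plan is to prove the two implications separately. The direction $A\Breq B\derives A\CMoreq{C}B$ is essentially a formal consequence of Morita theory, while the converse $A\CMoreq{C}B\derives A\Breq B$ is where the Azumaya structure is used in an essential way, via the double-centralizer facts recalled above.

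For $(\derives)$: suppose $A\otimes\End_C(P)\cong B\otimes\End_C(Q)$ as $C$-algebras, with $P,Q$ being $C$-progenerators. I would set $P_A:=P\otimes_C A$, which is a right $A$-progenerator since base change preserves progenerators, and apply Proposition~\ref{AZ:PR:standard-hom-III} (with $R=C$, $S=A$, $X=Y=P$) to identify $\End_A(P_A)\cong\End_C(P)\otimes A=A\otimes\End_C(P)$ as $C$-algebras. The $C$-action on $P_A$ is central by construction, since $c(p\otimes a)=cp\otimes a=p\otimes ca$; hence the $(\End_A(P_A),A)$-progenerator $P_A$ has type $\id_C$, i.e.\ $A\otimes\End_C(P)\CMoreq{C}A$, and symmetrically $B\otimes\End_C(Q)\CMoreq{C}B$. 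Since any isomorphism of $C$-algebras is in particular a Morita equivalence over $C$, composing these (and using transitivity of $\CMoreq{C}$, which is immediate from the tensor product of progenerators with central $C$-action) gives $A\CMoreq{C}B$.

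For $(\Longleftarrow)$: let $P$ be an $(A,B)$-progenerator with $cp=pc$ for all $c\in C$, $p\in P$. First observe that $P$ is a $C$-progenerator: it is finitely generated projective over $C$ because $B$ is finitely generated projective over $C$ and $P$ is finitely generated projective over $B$, and it is a $C$-generator because $B_C$ is a summand of $P^n$ for some $n$ while $B_C$ is itself a $C$-generator. The heart of the argument is then to embed $A$ into the Azumaya algebra $\End_C(P)$ and compute its centralizer. Left multiplication $a\mapsto(p\mapsto ap)$ is a $C$-algebra homomorphism $A\to\End_C(P)$ (it lands in $\End_C(P)$ because $c\in\Cent(A)$, and is injective by Proposition~\ref{AZ:PR:hom-between-azumaya-algs}), and right multiplication $b\mapsto(p\mapsto pb)$ is a $C$-algebra homomorphism $B^\op\to\End_C(P)$ --- this is exactly where the hypothesis $cp=pc$ is used, as it is what makes each right multiplication a $C$-endomorphism of $P$. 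An element of $\End_C(P)$ commutes with the image of $A$ iff it is a left $A$-module endomorphism of $P$; since $P$ is an $(A,B)$-progenerator, Morita theory gives $\End({}_AP)=B$ acting by right multiplication (endomorphisms of the left module ${}_AP$ act on the right, which is why $B$ becomes $B^\op$ inside $\End_C(P)$), so $\Cent_{\End_C(P)}(A)$ is the image of $B^\op$. Now the listed Azumaya facts --- $\End_C(P)$ is Azumaya because $P$ is a $C$-progenerator, and for a subalgebra the centralizer is Azumaya with $A\otimes\Cent_{\End_C(P)}(A)\cong\End_C(P)$ via $a\otimes b'\mapsto ab'$ --- yield $A\otimes_C B^\op\cong\End_C(P)$ as $C$-algebras. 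Tensoring with $B$ over $C$ and using $B^\op\otimes B\cong\End_C(B)$ (which holds because $B^\op$ is Azumaya and $B^\op$, $B$ are the same $C$-module) gives $A\otimes\End_C(B)\cong B\otimes\End_C(P)$, exhibiting $A\Breq B$ with the $C$-progenerators $B$ and $P$.

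The main obstacle is the centralizer computation in $(\Longleftarrow)$: identifying $\Cent_{\End_C(P)}(A)$ with $B^\op$ and verifying that the embeddings $A\hookrightarrow\End_C(P)$, $B^\op\hookrightarrow\End_C(P)$, the centralizer identity, and the isomorphism $A\otimes\Cent_{\End_C(P)}(A)\cong\End_C(P)$ are all compatible with the $C$-algebra structures --- it is precisely here that the refinement ``$cp=pc$'' is consumed, and where one must be careful with the convention that endomorphisms of left modules act on the right (so that $\End({}_AP)$ contributes $B^\op$, not $B$). Everything else reduces to the quoted propositions and the standard stability properties of progenerators.
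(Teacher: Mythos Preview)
Your proof is correct. The paper itself does not give a proof but only cites Bass's original paper \cite[Cor.~17.2]{Ba64} together with references explaining why the noetherian hypothesis there can be dropped; you have instead written out a self-contained argument using the facts about Azumaya algebras recalled in Section~\ref{section:azumaya}. Your backward direction is in fact very close to (a special case of) the argument the paper gives later for Proposition~\ref{AZ:PR:progenerators-between-Azu-algs}, which establishes the stronger statement that an $(A,B)$-bimodule with central $C$-action is an $(A,B)$-progenerator if and only if it is a $(C,A^\op\otimes B)$-progenerator; your centralizer computation $\Cent_{\End_C(P)}(A)=B^\op$ together with fact~(5) is exactly what yields $\End_C(P)\cong A\otimes B^\op$, which is the content of the last line of that proposition. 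So while the paper defers the theorem to the literature, your argument and the paper's proof of the related Proposition~\ref{AZ:PR:progenerators-between-Azu-algs} are essentially the same double-centralizer computation.
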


    \begin{proof}
        See \cite[Cor.\ 17.2]{Ba64}. The assumption that $\Spec(C)$ is noetherian in \cite{Ba64} can be ignored
        by \cite[Th.\ 2.3]{Sa99}; see also \cite[\S1]{Sa78}. 
    \end{proof}

    The \emph{exponent} of an Azumaya algebra $A$ is its order in $\Br(C)$.

    \begin{thm}\label{AZ:TH:exponent-rank-relation}
        Let $A$ be an Azumaya $C$-algebra. Then $\rank(A)$ is a square. Furthermore, if $\rank(A)$ divides $n^2$,
        then $A^{\otimes n}\Breq C$.
    \end{thm}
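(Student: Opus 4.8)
The plan is to reduce both assertions to statements about $\End_C$ of progenerators and to the behavior of rank under tensor products, using the two standard facts that an Azumaya algebra $A$ satisfies $A \otimes A^\op \cong \End_C(A)$ (by definition) and that $\rank$ is multiplicative under $\otimes_C$ (since $(M\otimes_C N)\otimes k_P \cong (M\otimes k_P)\otimes_{k_P}(N\otimes k_P)$). For the first assertion, that $\rank(A)$ is a square: I would decompose $C = C_1\times\dots\times C_t$ so that $\rank(A\otimes C_i)$ is a constant $n_i$ on each factor, as recalled just before Proposition~\ref{AZ:PR:hom-between-azumaya-algs}, and work one factor at a time; so assume $\rank(A)=n$ is constant. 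Then $\rank(A\otimes A^\op) = \rank(A)\rank(A^\op) = n^2$, while $A\otimes A^\op \cong \End_C(A)$ and $\rank(\End_C(M))=\rank(M)^2$ for any $C$-progenerator $M$ (again checkable fibrewise, since $\End_C(M)\otimes k_P \cong \End_{k_P}(M\otimes k_P)$). Hence $n^2 = \rank(A)^2$, which forces $\rank(A)=n$ to already be a perfect square — wait, that only says $n^2$ is a square, which is automatic. Let me restructure: the correct reduction is that $\rank(A)$, being constant $=n$ on a connected piece, must be a square because after a faithfully flat base change splitting $A$ (passing to an algebraically closed field $k_P$ for each $P$), $A\otimes k_P \cong \nMat{k_P}{m_P}$ for some $m_P$, so $\rank(A)(P) = m_P^2$. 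One cleanly gets this by invoking Proposition~\ref{AZ:PR:hom-between-azumaya-algs} or the structure of Azumaya algebras over a field, or more simply: $A\otimes k_P$ is a central simple $k_P$-algebra, hence has square dimension, and $\rank_C(A)(P) = \dim_{k_P}(A\otimes k_P)$.

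For the second assertion, suppose $\rank(A) \mid n^2$ (meaning $\rank(A)(P) \mid n^2$ for all $P\in\Spec(C)$). The goal is $A^{\otimes n}\Breq C$, i.e.\ $A^{\otimes n}\cong \End_C(P)$ for some $C$-progenerator $P$. The natural candidate is $P = A^{\otimes(n-1)}$ viewed as a right module over $A^{\otimes n}$ in a suitable way — but a cleaner route uses the identity $A\otimes A^\op \cong \End_C(A)$ together with $[A^\op] = [A]^{-1}$ in $\Br(C)$. Here is the approach I favor: first establish that $[A]$ has order dividing $n$ in $\Br(C)$ fibre-by-fibre, then globalize. Over each residue field $k_P$, $A\otimes k_P$ is central simple of dimension $\rank(A)(P) = d_P^2$ with $d_P \mid n$ (since $d_P^2 \mid n^2 \Rightarrow d_P\mid n$), and the index of a central simple algebra divides its degree, so $\exp(A\otimes k_P)\mid \ind(A\otimes k_P) \mid d_P \mid n$; hence $(A\otimes k_P)^{\otimes n}$ is split over $k_P$. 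To pass from "split at every point" to "split globally" — this is the crux — I would use that $A^{\otimes n}$ is an Azumaya $C$-algebra (fact (2) iterated) of rank $\rank(A)^n$, and compare it via Proposition~\ref{AZ:PR:hom-between-azumaya-algs} with a $C$-algebra of the form $\End_C(Q)$. Concretely, $[A]^n[A^\op]^n = [C]$ gives that $A^{\otimes n}\otimes (A^\op)^{\otimes n} \cong \End_C(A^{\otimes n})$; one then needs to exhibit an explicit progenerator. The standard trick: $A^{\otimes n} \otimes (A^{\otimes n})^{\op} \cong \End_C(A^{\otimes n})$, and since $A^\op \Breq A^{\otimes(n-1)}$ would follow if $[A]^n = [C]$... this is circular, so instead I would directly construct a right $A^{\otimes n}$-progenerator realizing the splitting, e.g.\ via descent from the splitting at each fibre, invoking that Brauer-triviality is an open and closed condition on $\Spec(C)$ and that $C$ decomposes accordingly; combined with the faithfully-flat-descent characterization of Azumaya algebras, $A^{\otimes n}$ being split at every residue field implies $A^{\otimes n}\cong\End_C(P)$.

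The main obstacle is exactly this last globalization step: passing from "$A^{\otimes n}$ is split at every point of $\Spec(C)$" to "$A^{\otimes n}$ is globally split". Pointwise splitting does not in general imply global splitting for a mere Azumaya algebra (the Brauer group of $C$ can be nontrivial even when all residue Brauer groups vanish — this is the failure of $\Br(C)\hookrightarrow \prod_P \Br(k_P)$), so the fibrewise index argument alone is insufficient. I expect the actual proof to circumvent this by a direct module-theoretic construction rather than a local-global principle: namely, exhibiting a concrete $(C, A^{\otimes n})$-progenerator — most likely built from $A$ itself, using the multiplication action and the isomorphism $A\otimes A^\op\cong\End_C(A)$ iteratively — whose existence shows $A^{\otimes n}\CMoreq{C} C$, and then applying Bass's criterion (Theorem~\ref{AZ:TH:Bass-criterion}) to conclude $A^{\otimes n}\Breq C$. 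The bookkeeping of how $A^{\otimes(n-1)}$ becomes such a progenerator, and checking the rank divisibility hypothesis is exactly what makes it a progenerator (not merely a faithful module), is where the real work lies; the rank hypothesis $\rank(A)\mid n^2$ should enter precisely to guarantee projectivity/progenerator-ness of the candidate module fibrewise, hence globally by Proposition~\ref{AZ:PR:hom-between-azumaya-algs}.
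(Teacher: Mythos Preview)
The paper does not give its own proof of this theorem. Section~\ref{section:azumaya} is explicitly a recollection of standard facts, and its opening paragraph refers the reader to \cite{Sa99} and \cite{DeMeyIngr71SeparableAlgebras} for proofs; Theorem~\ref{AZ:TH:exponent-rank-relation} is stated between Theorem~\ref{AZ:TH:Bass-criterion} and Proposition~\ref{AZ:PR:progenerators-between-Azu-algs} with no argument attached.

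That said, your treatment of the first assertion is the standard one and is correct once you drop the false start: $\rank(A)(P)=\dim_{k_P}(A\otimes k_P)$, and $A\otimes k_P$ is central simple over the field $k_P$, hence of square dimension.

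For the second assertion you have correctly isolated the real difficulty --- pointwise triviality in each $\Br(k_P)$ does not force triviality in $\Br(C)$ --- and correctly anticipated that the cure is a direct global construction of a splitting module rather than a local--global principle. However, your candidate $A^{\otimes(n-1)}$ has the wrong rank: writing $\rank(A)=d^2$ on a connected component, a $C$-progenerator $P$ with $A^{\otimes n}\cong\End_C(P)$ must satisfy $\rank_C(P)=d^{\,n}$, whereas $\rank_C(A^{\otimes(n-1)})=d^{\,2n-2}$, and there is no evident $A^{\otimes n}$-module structure on $A^{\otimes(n-1)}$ anyway. In the cited references the argument runs instead by faithfully flat descent: over a splitting cover one has $A\cong\End(V)$ with $V$ locally free of rank $d$, and the $A^{\otimes d}$-module $V^{\otimes d}$ descends to the base because the map $g\mapsto\det(g)^{-1}\,g^{\otimes d}$ from $\GL_d$ to $\GL_{d^{\,d}}$ sends every central scalar to the identity and therefore factors through $\mathrm{PGL}_d$. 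This exhibits $[A]$ as $d$-torsion in $\Br(C)$; since $d^2\mid n^2$ gives $d\mid n$, one concludes $[A^{\otimes n}]=[C]$. So the rank hypothesis enters only as the divisibility $d\mid n$, not (as you guessed) as a projectivity condition on a candidate module.
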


    We will also need the following proposition.

    \begin{prp}\label{AZ:PR:progenerators-between-Azu-algs}
        Let $A$, $B$ be Azumaya $C$-algebras and let $K$ be an $(A,B)$-bimodule
        satisfying $ck=kc$ for all $c\in C$. Then $K$ is an $(A,B)$-progenerator
        if and only if $K$ is a $(C,A^\op\otimes B)$-progenerator
        with $A^\op\otimes B$ acting via $k(a^\op\otimes b)=akb$.
        In particular, in this case, $\End_C({}_CK)\cong A^\op\otimes B$.
    \end{prp}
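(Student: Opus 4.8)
The strategy is to reduce the statement to the double centralizer property of Azumaya algebras, i.e.\ to item~(5) in the list above. Write $D:=A^\op\otimes B$; it is Azumaya over $C$ by~(2), and the hypothesis $ck=kc$ says precisely that the given $(A,B)$-bimodule $K$ is a $(C,D)$-bimodule, with $D$ acting on the right through $k(a^\op\otimes b)=akb$ and $C$ acting through the restriction of this action. The structure map of this right $D$-module is the $C$-algebra homomorphism $\psi\colon D\to\End_C(K)^\op$ given by $\psi(r)(k)=kr$ (it lands in the \emph{opposite} ring because $k(rr')=(kr)r'$). The key reduction is the following elementary observation, valid for any Azumaya $C$-algebra $D$ and any right $D$-module $K$ with central $C$-action: $K$ is a $(C,D)$-progenerator if and only if $K_C$ is a $C$-progenerator and $\psi$ is an isomorphism. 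The forward implication uses that $D$ is itself a $C$-progenerator (part of the definition of Azumaya) to transport ``f.g.\ projective/generator over $D$'' to ``over $C$'', together with the Morita-theoretic description (see \S\ref{section:Morita}) of $D$ as the endomorphism ring of ${}_CK$; the backward implication is immediate, since through an isomorphism $\psi$ the module $K$ becomes the tautological $(C,\End_C(K)^\op)$-progenerator attached to the $C$-progenerator $K$. Granting this, the last assertion $\End_C({}_CK)\cong A^\op\otimes B$ is automatic, being just the statement that $D$ acts on the $(C,D)$-progenerator $K$.

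For the implication ``$(A,B)$-progenerator $\Rightarrow$ $(C,D)$-progenerator'': if $K$ is an $(A,B)$-progenerator then $K_B$ is a $B$-progenerator, and since $B$ is a $C$-progenerator it follows that $K_C$ is a $C$-progenerator; hence $\End_C(K)$ and $\End_C(K)^\op$ are Azumaya over $C$ by~(4) and~(2). Then $\psi\colon D\to\End_C(K)^\op$ is a homomorphism of Azumaya $C$-algebras, so it is injective by Proposition~\ref{AZ:PR:hom-between-azumaya-algs}, and $\psi(D)$ is an Azumaya subalgebra of $\End_C(K)^\op$. By~(5), $\psi(D)\otimes_C\Cent_{\End_C(K)^\op}(\psi(D))\cong\End_C(K)^\op$, so it suffices to check that this centralizer equals $C$. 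An $f\in\End_C(K)$ centralizes $\psi(D)$ exactly when $f(akb)=af(k)b$ for all $a\in A$, $b\in B$, i.e.\ when $f\in\End({}_AK)\cap\End(K_B)$; since $K$ is an $(A,B)$-progenerator, $\End(K_B)=A$ (acting by left multiplication) and ${}_AK$ is faithful, so $f$ is left multiplication by an element of $\Cent(A)=C$. Thus the centralizer is $C$, $\psi$ is onto, and the reduction above finishes this direction.

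For the converse, suppose $K$ is a $(C,D)$-progenerator. By the reduction, $K_C$ is a $C$-progenerator and $\psi$ is an isomorphism, so through $\psi$ we identify $E:=\End_C(K)$ with $D^\op=A\otimes_C B^\op$, under which $A$ becomes the subalgebra of left multiplications and $B^\op$ the subalgebra of right multiplications; by~(5) these are each other's centralizers in $E$, and restricting the natural left $E$-action on $K$ to $B^\op$ returns the original right $B$-action. Since $K_C$ is a $C$-progenerator, Morita theory makes ${}_EK$ a progenerator over $E$; thus ${}_EK$ is a direct summand of ${}_EE^{\,n}$ and ${}_EE$ is a direct summand of ${}_EK^{\,m}$ for suitable $n,m$. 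On the other hand, $E=A\otimes_C B^\op$ is a progenerator as a (left) $B^\op$-module because $A$ is a $C$-progenerator. Restricting scalars along $B^\op\hookrightarrow E$ and combining these facts shows that ${}_{B^\op}K=K_B$ is f.g.\ projective over $B$ (a summand of a projective) and a generator over $B$ (chaining the two summand relations), hence a $B$-progenerator; and $\End(K_B)=\Cent_E(B^\op)=A$. Therefore $K$ is an $(A,B)$-progenerator.

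The delicate point is the converse direction, and within it the passage from ``$K$ is a progenerator over the big ring $E=\End_C(K)$'' to ``$K_B$ is a progenerator over $B$''. This works only because $E$ is itself a progenerator over $B$ as a ring, so that restriction of scalars preserves both the projectivity and the generator relations; the rest is bookkeeping, namely tracking left/right module conventions through the anti-isomorphism $\psi$ and the various opposite rings, which I do not expect to cause real trouble.
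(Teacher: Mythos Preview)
Your proof is correct, but it takes a genuinely different route from the paper's. The paper handles the implication ``$(A,B)$-progenerator $\Rightarrow$ $(C,A^\op\otimes B)$-progenerator'' by a pure Morita-composition argument: it first exhibits $A$ as a $(C,A\otimes A^\op)$-progenerator (this is exactly the Azumaya condition), then scalar-extends $K$ to an $(A\otimes A^\op,B\otimes A^\op)$-progenerator via Proposition~\ref{AZ:PR:scalar-ext-of-prog}, and finally tensors the two progenerators together, checking that the result is $K$ itself viewed as a $(C,B\otimes A^\op)$-bimodule. The other direction is dismissed as an easy consequence of the centralizer identity $\Cent_{A^\op\otimes B}(C\otimes B)=A^\op\otimes C$.

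You instead reduce both directions to the single criterion ``$K_C$ is a $C$-progenerator and $D\to\End_C(K)^\op$ is an isomorphism'', and then verify this criterion (and its consequences for $\End(K_B)$) using the double-centralizer property~(5) and Proposition~\ref{AZ:PR:hom-between-azumaya-algs}. Your argument is more symmetric and more self-contained---it does not invoke the scalar-extension machinery of Proposition~\ref{AZ:PR:scalar-ext-of-prog}---at the cost of a slightly longer computation of the centralizer of $\psi(D)$. The paper's approach, by contrast, makes the Morita-theoretic content (``tensor with $A^\op$ and then pass along the equivalence $\rMod{(A\otimes A^\op)}\simeq\rMod{C}$'') completely explicit, which is in keeping with how this proposition is later used (e.g.\ in the proof of Theorem~\ref{AZ:TH:Saltman}(ii)). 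Both arguments are standard and of comparable difficulty; neither reveals anything the other misses.
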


    \begin{proof}
        The ``only if'' part easily follows from  $\Cent_{A^\op\otimes B}(C\otimes B)=A^\op\otimes C$,
        so we turn to prove the ``if'' part.
        Let $D=A\otimes A^\op$ and consider $A$ as a right $D$-module via $a(x\otimes y^\op)=yax$.
        Since ${}_CA$ is a progenerator and $D\cong \End({}_CA)$ (because $A$ is Azumaya),
        $A$ is a $(C,D)$-progenerator.
        By Proposition~\ref{AZ:PR:scalar-ext-of-prog}, $K\otimes A^\op$ is an $(A\otimes A^\op,B\otimes A^\op)$-progenerator
        and hence ${}_CA\otimes_D(K\otimes A^\op)$ is a $(C,B\otimes A^\op)$-progenerator.
        It is straightforward to check that ${}_CA\otimes_D(K\otimes A^\op)\cong K$ as $(C,B\otimes A^\op)$-bimodules
        via $x\otimes_D(k\otimes y^\op)\mapsto yxk$ and $k\mapsto 1\otimes_D(k\otimes 1)$ in the other direction,
        so we are done.
    \end{proof}

\subsection{Galois Extensions}

    Let $G$ be a finite group of ring-automorphisms of $C$ and let
    $C_0=C^G$ be the ring of elements fixed by $G$. For every $2$-cocycle $f\in Z^2(G,\units{C})$,\footnote{
        Recall that a $2$-cocycle of $G$ taking values in $\units{C}$ is a function $f:G\times G\to \units{C}$
        satisfying $\sigma(f(\tau,\eta))f(\sigma,\tau\eta)=f(\sigma,\tau)f(\sigma\tau,\eta)$ and
        $f(1,\sigma)=f(\sigma,1)=1$ for all $\sigma,\tau,\eta\in G$.
        The $2$-cocycles with the operation of point-wise multiplication form an abelian group denoted $Z^2(G,\units{C})$.
    }
    let $\Delta(C/C_0,G,f)=\bigoplus_{\sigma\in G}Cu_\sigma$ where $\{u_\sigma\}_{\sigma\in G}$
    are formal variables. We make $\Delta(C/C_0,G,f)$ into a ring by linearly extending
    the following relations
    \[
            u_\sigma u_\tau=f(\sigma,\tau)u_{\sigma\tau},\qquad u_\sigma c=\sigma(c)u_\sigma,\qquad\forall \sigma,\tau\in G,~c\in C\ .
    \]
    This makes $\Delta(C/C_0,G,f)$ into a $C_0$-algebra whose unity is $u_1$.

    We say that $C/C_0$ is a \emph{Galois extension with Galois group $G$} if
    $\Delta(C/C_0,G,1)$ is an Azumaya $C_0$-algebra ($1$ denotes the trivial $2$-cocycle $f(\sigma,\tau)\equiv 1$).
    In this case:
    \begin{enumerate}
        \item $\Delta(C/C_0,G,1)\cong \End_{C_0}(C)$ via  $\sum_\sigma c_\sigma u_\sigma \mapsto
        [x\mapsto \sum_\sigma c_\sigma \sigma(x)]\in\End_{C_0}(C)$.
        \item The $C_0$-algebra $\Delta(C/C_0,G,f)$ is Azumaya for all $f\in Z^2(G,\units{C})$.
        \item $\rank_{C_0}(C)=|G|$.
        \item For every commutative $C_0$-algebra $D$, the extension $C\otimes_{C_0} D/C_0\otimes_{C_0} D$ is
        Galois  with Galois group  $\{g\otimes_{C_0} \id_D\where g\in G\}$.
    \end{enumerate}
    See \cite[\S6]{Sa99} for proofs.
    Azumaya algebras of the form $\Delta(C/C_0,G,f)$ are called \emph{crossed products}.

    \begin{prp}\label{AZ:PR:Galois-invariant-part}
        Let $C/C_0$ be a Galois extension with Galois group $G$. Assume $M$ is a $C$-module
        endowed with a $G$-action such that $\sigma(mc)=\sigma(m)\sigma(c)$ for all $\sigma\in G$, $m\in M$, $c\in C$.
        Then $M^G\otimes_{C_0}C\cong M$ via $m\otimes c\mapsto mc$, where
        \[M^G:=\{m\in M\suchthat \sigma(m)=m~\forall \sigma\in G\}\ .\]
        In particular, $\rank_{C_0}(M)=\rank_{C_0}(C)\rank_{C_0}(M^G)=|G|\cdot\rank_{C_0}(M^G)$.
        Furthermore, if $M$ is an Azumaya $C$-algebra and $G$ acts via ring automorphisms on $M$, then
        $M^G$ is an Azumaya $C_0$-algebra.
    \end{prp}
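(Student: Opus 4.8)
The plan is to reduce everything to the first assertion, that the multiplication map
$\mu\colon M^G\otimes_{C_0}C\to M$, $m\otimes c\mapsto mc$, is an isomorphism of $C$-modules (and of $C$-algebras in the Azumaya case); the rank formula and the Azumaya statement will then follow by descent along $C_0\to C$. I would prove that $\mu$ is bijective by exhibiting an explicit two-sided inverse. The key input is the standard feature of Galois extensions (see \cite[\S6]{Sa99} or \cite{DeMeyIngr71SeparableAlgebras}) that there exist finitely many elements $x_1,\dots,x_n,y_1,\dots,y_n\in C$ with
\[
\sum_{i=1}^{n}\sigma(x_i)\,y_i=\delta_{\sigma,1}\qquad\forall\,\sigma\in G
\]
(one of the equivalent characterizations of $C/C_0$ being Galois; it is the concrete form of the isomorphism $C\otimes_{C_0}C\cong\Map(G,C)$).

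Using a fixed such family, define $\psi\colon M\to M^G\otimes_{C_0}C$ by $\psi(m)=\sum_i\bigl(\sum_{\sigma\in G}\sigma(mx_i)\bigr)\otimes y_i$. The inner sum $\sum_\sigma\sigma(mx_i)$ is $G$-invariant because $G$ permutes its summands, so $\psi$ indeed lands in $M^G\otimes_{C_0}C$, and $\psi$ is visibly additive (and $C_0$-linear). The identity $\mu\circ\psi=\id_M$ is immediate: $\mu(\psi(m))=\sum_\sigma\sigma(m)\bigl(\sum_i\sigma(x_i)y_i\bigr)=\sum_\sigma\sigma(m)\delta_{\sigma,1}=m$. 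For $\psi\circ\mu=\id$ I would take a simple tensor $m\otimes c$ with $m\in M^G$; then $\psi(mc)=\sum_i\bigl(\sum_\sigma\sigma(m)\sigma(cx_i)\bigr)\otimes y_i=\sum_i\bigl(m\cdot\Tr_{C/C_0}(cx_i)\bigr)\otimes y_i$, using $\sigma(m)=m$ and writing $\Tr_{C/C_0}(cx_i):=\sum_\sigma\sigma(cx_i)\in C_0$. Since $\Tr_{C/C_0}(cx_i)\in C_0$ we may slide it across the tensor product, so $\psi(mc)=m\otimes\bigl(\sum_i\Tr_{C/C_0}(cx_i)\,y_i\bigr)$, and $\sum_i\Tr_{C/C_0}(cx_i)\,y_i=\sum_\sigma\sigma(c)\bigl(\sum_i\sigma(x_i)y_i\bigr)=\sum_\sigma\sigma(c)\delta_{\sigma,1}=c$. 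Hence $\psi(mc)=m\otimes c$, so $\psi=\mu^{-1}$. Since $\mu$ is plainly $C$-linear, and also multiplicative when $G$ acts by ring automorphisms (with $M^G\otimes_{C_0}C$ carrying its natural algebra structure, using that elements of $C$ are central in $M$), this gives $M^G\otimes_{C_0}C\cong M$ as $C$-modules (resp.\ as $C$-algebras).

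The two consequences are then routine. For the rank: since rank is multiplicative under $\otimes_{C_0}$ (computed fibrewise over $\Spec(C_0)$, as $\dim$ over a field is multiplicative under tensor) and $\rank_{C_0}(C)=|G|$, we get $\rank_{C_0}(M)=\rank_{C_0}(M^G\otimes_{C_0}C)=|G|\cdot\rank_{C_0}(M^G)$. For the last statement, suppose $M$ is Azumaya over $C$ and $G$ acts by ring automorphisms; then $M^G$ is a $C_0$-subalgebra of $M$, and $M^G\otimes_{C_0}C\cong M$ is Azumaya over $C$. The ring $C$ is a $C_0$-progenerator — this is part of $C/C_0$ being Galois (e.g.\ from $\Delta(C/C_0,G,1)\cong\End_{C_0}(C)$) — hence faithfully flat over $C_0$, so the defining conditions of an Azumaya algebra (being a f.g.\ projective faithful module, and $A\otimes_{C_0}A^{\op}\xrightarrow{\ \sim\ }\End_{C_0}(A)$) descend from $M^G\otimes_{C_0}C$ to $M^G$; thus $M^G$ is Azumaya over $C_0$.

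The only real work is the verification $\psi\circ\mu=\id$, where the $C$-scalar $c$ must reappear out of the trace expression $\sum_i\Tr_{C/C_0}(cx_i)\,y_i$; this is precisely where the Galois relation $\sum_i\sigma(x_i)y_i=\delta_{\sigma,1}$ gets used in full. The other potential snag is a clean invocation of faithfully flat descent for the Azumaya property; if one prefers to avoid citing it wholesale, one can descend each clause separately — f.g.\ projectivity and faithfulness of $M^G$ as a $C_0$-module, and bijectivity of $\Psi_{M^G}$ — each again by faithful flatness of $C$ over $C_0$ together with the base-change isomorphisms $\End_{C_0}(M^G)\otimes_{C_0}C\cong\End_C(M)$ and $(M^G\otimes_{C_0}M^{G,\op})\otimes_{C_0}C\cong M\otimes_C M^{\op}$.
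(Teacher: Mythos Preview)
Your proof is correct. The paper does not actually prove this proposition at all; it simply cites \cite[Prps.\ 6.10 \& 6.11]{Sa99}. What you have written is essentially the standard proof one finds in that reference (and in \cite{DeMeyIngr71SeparableAlgebras}): use the Galois system $\{x_i,y_i\}$ to build the averaging map $\psi$, verify it inverts $\mu$, and then invoke faithfully flat descent for the rank and Azumaya statements. So there is no genuine methodological difference to compare --- you have supplied the details that the paper omits by citation, and your verification of $\psi\circ\mu=\id$ and the descent of the Azumaya property are clean and complete.
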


    \begin{proof}
        See \cite[Prps.\ 6.10 \& 6.11]{Sa99}.
    \end{proof}

\subsection{Corestriction}

    Let $C/C_0$ be a Galois extension with Galois group $G$. For every Azumaya $C$-algebra $A$ and $\sigma\in G$,
    define $A^\sigma$ to be the $C$-algebra obtained from $A$ by viewing $A$ as a $C$-algebra
    via $\sigma:C\to A$. The algebra $A^\sigma$ is also Azumaya, but it need not be Brauer-equivalent to $A$.
    Observe that the algebra
    $B:=\bigotimes_{\sigma\in G}A^\sigma$ admits a $G$-action given by $\tau(\bigotimes_\sigma a_\sigma)=\bigotimes_\sigma a_{\tau^{-1}\sigma}$
    and that action satisfies $\sigma(bc)=\sigma(b)\sigma(c)$ for all $\sigma\in G$, $b\in B$, $c\in C$.
    The \emph{corestriction} of $A$ (with respect to $C/C_0$) is defined to be
    \[
        \Cor_{C/C_0}(A)=B^G=\Big(\bigotimes_{\sigma\in G}A^\sigma\Big)^G\ .
    \]
    By Proposition~\ref{AZ:PR:Galois-invariant-part},  $\Cor_{C/C_0}(A)$ is an Azumaya $C_0$-algebra. Moreover, it
    can be shown that the corestriction induces a group homomorphism
    \[
        \Cor_{C/C_0}~:~\Br(C)\to\Br(C_0)
    \]
    given by $\Cor_{C/C_0}([A])=[\Cor_{C/C_0}(A)]$.
    We also note  that the corestriction can be defined for  \emph{separable} extensions $C/C_0$; see \cite[Ch.\ 8]{Sa99} for
    further details.

\section{Saltman's Theorem}
\label{section:first-kind}

    In this section, we show how
    to recover the Knus-Parimala-Srinivas proof of Saltman's Theorem (\cite[\S4]{KnParSri90})
    as an application of Theorem~\ref{AZ:TH:being-mor-eq-to-ring-with-inv}.
    In order to keep the exposition as self-contained and fluent as possible, we will first
    give a proof of Saltman's Theorem using Theorem~\ref{AZ:TH:being-mor-eq-to-ring-with-inv}, and then
    explain how this proof relates to the proof in \cite{KnParSri90}.

\subsection{Saltman's Theorem}

    Let $C$ be a commutative ring and let $R$ be a $C$-algebra.
    Recall that
    a Goldman element of $R/C$ is an element $g\in R\otimes_CR$ such
    that $g^2=1$ and $g(r\otimes s)=(s\otimes r)g$ for all $r,s\in R$.

    \begin{prp}\label{AZ:EX:Azumayma-alg-has-Goldman-elem}
        All Azumaya $C$-algebras have a Goldman element
    \end{prp}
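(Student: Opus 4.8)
The plan is to produce the Goldman element explicitly out of two standard features of an Azumaya algebra: the isomorphism $\Psi\colon A\otimes_C A^\op\to\End_C(A)$ and the reduced trace $\operatorname{Trd}_A\colon A\to C$. Composing $\Psi$ with the evident $C$-module identification $A\otimes_C A\to A\otimes_C A^\op$, $a\otimes b\mapsto a\otimes b^\op$, yields a $C$-module isomorphism (not a ring homomorphism) $\phi\colon A\otimes_C A\xrightarrow{\sim}\End_C(A)$, $\phi(a\otimes b)=(x\mapsto axb)$; in particular $\phi$ is injective, so it will be enough to produce an element $g$ and then verify $g^2=1$ and $g(r\otimes s)=(s\otimes r)g$ after applying $\phi$. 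The reduced trace form $T_A(a,b)=\operatorname{Trd}_A(ab)$ is unimodular (for matrix algebras its Gram matrix in the standard basis is a permutation matrix, and this passes to all Azumaya algebras), hence induces a $C$-module isomorphism $A\otimes_C A\xrightarrow{\sim}\End_C(A)$; I define $g\in A\otimes_C A$ to be the preimage of $\id_A$. Locally on $\Spec C$ one has $g=\sum_i x_i\otimes y_i$ for a $C$-basis $\{x_i\}$ of $A$ and its $T_A$-dual basis $\{y_i\}$, and $g$ satisfies the two identities
\[
\sum_i x_i\,w\,y_i=\operatorname{Trd}_A(w)\cdot 1_A\ \ (\forall w\in A),\qquad \sum_i \operatorname{Trd}_A(zy_i)\,x_i=z\ \ (\forall z\in A);
\]
the second is the general dual-basis formula, and the first is the only substantive input (see below).

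Granting this, both conditions follow from short computations that I would carry out locally (they are equalities of global elements, hence may be checked on an open cover). For $g^2=1$: $g^2=\sum_{i,j}x_ix_j\otimes y_iy_j$, so $\phi(g^2)(z)=\sum_{i,j}x_ix_j\,z\,y_iy_j=\sum_i x_i\big(\sum_j x_j(zy_i)y_j\big)=\sum_i x_i\operatorname{Trd}_A(zy_i)=z=\phi(1\otimes 1)(z)$, and injectivity of $\phi$ gives $g^2=1$. For $g(r\otimes s)=(s\otimes r)g$: $\phi\big(g(r\otimes s)\big)(z)=\sum_i(x_ir)z(y_is)=\big(\sum_i x_i(rz)y_i\big)s=\operatorname{Trd}_A(rz)\,s$, while $\phi\big((s\otimes r)g\big)(z)=\sum_i(sx_i)z(ry_i)=s\big(\sum_i x_i(zr)y_i\big)=\operatorname{Trd}_A(zr)\,s$, and these agree because $\operatorname{Trd}_A(rz)=\operatorname{Trd}_A(zr)$.

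The one point that is not formal — and the step I expect to be the main obstacle — is the identity $\sum_i x_i\,w\,y_i=\operatorname{Trd}_A(w)\cdot 1_A$, equivalently $\phi(g)=(x\mapsto\operatorname{Trd}_A(x)1_A)$. It holds for $A=\nMat{C}{n}$, where $g=\sum_{i,j}e_{ij}\otimes e_{ji}$ and $\sum_{i,j}e_{ij}we_{ji}=\tr(w)\cdot I$ by direct computation; since $g$, $\operatorname{Trd}_A$ and both sides of the identity are compatible with base change, the general case follows by passing to a faithfully flat $C$-algebra $C'$ over which $A\otimes_C C'$ is a matrix algebra and using $A\otimes_C A\hookrightarrow(A\otimes_C A)\otimes_C C'$. (Alternatively one can bypass the reduced trace: reduce to the split case $A=\End_C(P)$ for a $C$-progenerator $P$, where by Proposition~\ref{AZ:PR:hom-of-tensor} $A\otimes_C A\cong\End_C(P\otimes_C P)$ and the swap map $p\otimes q\mapsto q\otimes p$ is visibly a Goldman element, and then descend — the descent datum is satisfied because the swap map is canonical, being fixed under conjugation by diagonal units $u\otimes u$. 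In any case the statement is classical; cf.\ \cite{DeMeyIngr71SeparableAlgebras}.)
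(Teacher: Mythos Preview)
Your argument is correct and is precisely the classical construction of the Goldman element via dual bases for the reduced trace form; this is essentially what the references cited in the paper (Knus--Ojanguren and Saltman's lecture notes) do. Since the paper itself gives no proof beyond those citations, there is no alternative approach to compare against --- you have supplied the standard proof in full.
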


    \begin{proof}
        See \cite[p.\ 112]{KnusOjan74} or \cite[Pr.\ 5.1]{Sa99}, for instance.
    \end{proof}

    \begin{thm}[Saltman]\label{AZ:TH:Saltman}
        Let $C$ be a commutative ring and let $A$ be an Azumaya $C$-algebra. Then:
        \begin{enumerate}
            \item[(i)] $A$ is Brauer equivalent to an Azumaya algebra $B$ with an involution
            of the first kind if and only if $A\Breq A^\op$.
            \item[(ii)] Let $C/C_0$ be a Galois extension with Galois group $\{1,\sigma\}$ ($\sigma\neq 1$).
            Then $A$ is Brauer equivalent to an Azumaya algebra $B$ with an involution whose restriction
            to $C$ is $\sigma$ if and only if $\Cor_{C/C_0}(B)\Breq C_0$.
        \end{enumerate}
    \end{thm}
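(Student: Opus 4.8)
The plan is a two-step reduction followed by the input of the Azumaya structure. First, for an Azumaya $C$-algebra $A$, a $C$-algebra $B$ is Brauer equivalent to $A$ iff $B\CMoreq{C}A$ (Theorem~\ref{AZ:TH:Bass-criterion}), and a $C$-algebra Morita equivalent to $A$ over $C$ is automatically Azumaya; hence ``$A$ is Brauer equivalent to an Azumaya algebra carrying an involution of type $\sigma$'' is the same as ``$A$ is Morita equivalent over $C$ to a $C$-algebra with an involution of type $\sigma$'', which by Theorem~\ref{AZ:TH:being-mor-eq-to-ring-with-inv} amounts to the existence of a double $A$-progenerator of type $\sigma$ admitting an involution. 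Thus (i) reduces to proving this for $\sigma=\id_C$ and (ii) for $\sigma$ the nontrivial element of $G=\{1,\sigma\}$. Moreover, forgetting the involution, a double $A$-progenerator of type $\sigma$ — viewed as in section~\ref{section:double-prog} as a right $A\otimes_C A^\sigma$-module which is an $(A^\op,A)$-progenerator — exists exactly when $[A\otimes_C A^\sigma]=1$ in $\Br(C)$; for $\sigma=\id_C$ this is $A\Breq A^\op$, and in case (ii) it follows from (but is a priori weaker than) the splitting of $\Cor_{C/C_0}(A)$ over $C_0$, since $\Cor_{C/C_0}(A)\otimes_{C_0}C\cong A\otimes_C A^\sigma$ by Proposition~\ref{AZ:PR:Galois-invariant-part}.

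The ``only if'' directions are direct. If $B\Breq A$ has an involution $\alpha$ of type $\sigma$ then, since $\sigma^2=\id_C$, $\alpha$ identifies $B^\sigma$ with $B^\op$ over $C$; for $\sigma=\id_C$ this gives $A\Breq B\Breq B^\op\Breq A^\op$ (using $\End_C(P)^\op\cong\End_C(\Hom_C(P,C))$), proving (i) in this direction. For (ii), transport the canonical factor-permuting $G$-action on $B\otimes_C B^\sigma$ through $B\otimes_C B^\sigma\xrightarrow{\ \id\otimes\alpha\ }B\otimes_C B^\op\xrightarrow{\ \sim\ }\End_C(B)$ (the last map being the defining Azumaya isomorphism): a short computation shows this action becomes conjugation by $\alpha\in\End_{C_0}(B)$, an additive $\sigma$-semilinear anti-automorphism of $B$ with $\alpha^2=1$. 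Hence $\Cor_{C/C_0}(B)=(B\otimes_C B^\sigma)^G\cong\{f\in\End_C(B):f\alpha=\alpha f\}=\End_T(B)$, where $T\subseteq\End_{C_0}(B)$ is generated by the multiplications by $C$ and by $\alpha$; since $\alpha^2=1$ and $\alpha c\alpha^{-1}=\sigma(c)$ one gets $T\cong\Delta(C/C_0,G,1)\cong\End_{C_0}(C)$, so $\Cor_{C/C_0}(B)\cong\End_{C_0}(N)$ for the $C_0$-progenerator $N$ corresponding to $B$ under $\rMod{\End_{C_0}(C)}\simeq\rMod{C_0}$, i.e.\ $\Cor_{C/C_0}(B)$ is split over $C_0$; as $\Cor_{C/C_0}$ is well defined on Brauer classes and $[A]=[B]$, also $\Cor_{C/C_0}(A)\Breq C_0$.

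For the ``if'' directions one must produce the involution, and here the Goldman element enters. Take first $\sigma=\id_C$: assuming $A\Breq A^\op$, Theorem~\ref{AZ:TH:Bass-criterion} gives a double $A$-progenerator $K$ of type $\id_C$, which I regard as a right $A\otimes_C A$-module via $k(a\otimes b)=(k\mul 0 a)\mul 1 b$. Let $g\in A\otimes_C A$ be a Goldman element (Proposition~\ref{AZ:EX:Azumayma-alg-has-Goldman-elem}): $g^2=1$ and $g(r\otimes s)=(s\otimes r)g$. Then $\theta\colon K\to K$, $\theta(k)=kg$, is an involution of the double $A$-module $K$, because the Goldman relation gives $(k\mul i r)^\theta=k^\theta\mul{1-i}r$ for $i=0,1$ and $g^2=1$ gives $\theta^2=\id_K$; so $K$ is a double $A$-progenerator of type $\id_C$ admitting an involution, and Theorem~\ref{AZ:TH:being-mor-eq-to-ring-with-inv} (via the hyperbolic form on $P\oplus P^{[1]}$ in its proof) yields an Azumaya $C$-algebra $B\Breq A$ with an involution of the first kind. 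For (ii) the same scheme applies with $A\otimes_C A$ replaced by $A\otimes_C A^\sigma$ and $g$ replaced by a ``$\sigma$-twisted Goldman element'' $h\in A\otimes_C A^\sigma$ with $h^2=1$ and $h(r\otimes s)=(s\otimes r)h$, which again acts by right multiplication as an involution on any double $A$-progenerator of type $\sigma$. The existence of such $h$ is governed precisely by the splitting of $\Cor_{C/C_0}(A)$: unwinding $\Cor_{C/C_0}(A)=(A\otimes_C A^\sigma)^G$ — with the $G$-action the factor swap, well defined since $\sigma^2=\id_C$ — by the same centralizer/descent bookkeeping as in the previous paragraph ($\Delta(C/C_0,G,1)\cong\End_{C_0}(C)$, Proposition~\ref{AZ:PR:Galois-invariant-part}) matches ``$h$ exists'' with ``$\Cor_{C/C_0}(A)$ is split over $C_0$'', and feeding $h$ into Theorem~\ref{AZ:TH:being-mor-eq-to-ring-with-inv} furnishes $B\Breq A$ with an involution restricting to $\sigma$ on $C$. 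I expect the main obstacle to be exactly this last point: the recipe $\theta(\cdot)=(\cdot)g$ of part (i) relies only on the existence of a Goldman element, which is automatic for every Azumaya algebra, whereas the twisted analogue $h$ need not exist, and pinning its existence to the splitting of the corestriction over $C_0$ — rather than merely to $[A\otimes_C A^\sigma]=1$ over $C$ — is where the full Galois-descent argument is needed.
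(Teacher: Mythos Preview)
Your treatment of part (i) and of the ``only if'' direction in part (ii) is fine and close to what the paper does (for (i) the paper uses exactly the Goldman-element trick you describe; for the forward direction of (ii) the paper packages the same $\Delta(C/C_0,G,1)$-action slightly differently, starting from the double progenerator $K_\alpha$, but the idea is the same).

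The genuine gap is in the ``if'' direction of (ii). Your proposed ``$\sigma$-twisted Goldman element'' $h\in D:=A\otimes_C A^\sigma$ with $h^2=1$ and $h(r\otimes s)=(s\otimes r)h$ \emph{cannot exist} (for $C\neq 0$). Indeed, such an $h$ is a unit, and conjugation by $h^{-1}$ would then equal the factor-swap automorphism of $D$. But the swap acts on $\Cent(D)=C$ as $\sigma$: in $D$ one has $c\otimes 1=1\otimes\sigma(c)$, so $\mathrm{swap}(c\otimes 1)=1\otimes c=\sigma(c)\otimes 1$. An inner automorphism fixes the center, so no such $h$ exists, regardless of whether $\Cor_{C/C_0}(A)$ is split. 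More conceptually: an involution $\theta$ of a double $A$-progenerator $K$ of type $\sigma$ must satisfy $(k\mul{1}c)^\theta=k^\theta\mul{1}\sigma(c)$, i.e.\ $\theta$ is $\sigma$-semilinear over $C$. Right multiplication by any element of $D$ is $C$-linear, so $\theta$ can never be of the form $k\mapsto kh$ when $\sigma\neq\id_C$.

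The paper's route around this is to build $K$ together with its involution directly from the splitting datum of the corestriction. If $E:=\Cor_{C/C_0}(A)=(A\otimes_C A^\sigma)^G$ is split, pick a $(C_0,E)$-progenerator $Q$, set $K:=Q\otimes_{C_0}C$, view it as a $(C,D)$-progenerator via $E\otimes_{C_0}C\cong D$ (hence as a double $A$-progenerator of type $\sigma$ by Proposition~\ref{AZ:PR:progenerators-between-Azu-algs}), and take $\theta:=\id_Q\otimes_{C_0}\sigma$. This $\theta$ is $\sigma$-semilinear over $C$ and $E$-linear, which is exactly what is needed to check $(k\mul{i}a)^\theta=k^\theta\mul{1-i}a$. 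So the correct replacement for your $h$ is not an element of $D$ but a descent datum on $K$ coming from the $C_0$-structure of $Q$.
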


    \begin{proof}
        (i)
        By Theorem~\ref{AZ:TH:Bass-criterion}, $A\Breq A^\op$ if and only if there exists a double $A$-progenerator
        of type $\id_C$ (i.e.\ an $(A^\op,A)$-progenerator of type $\id_C)$.
        Likewise,
        by Theorems~\ref{AZ:TH:being-mor-eq-to-ring-with-inv} and~\ref{AZ:TH:Bass-criterion},
        $A$ is Brauer equivalent to a $C$-algebra $B$ with
        involution of the first kind if and only if there exist a double $A$-progenerator of type $\id_C$ with involution.
        Therefore, it is enough to show that every double $A$-progenerator $K$ of type $\id_C$ has an involution.
        Indeed, consider $K$ as an $A\otimes_C A$-module via $k\cdot(a\otimes_C a')=k\mul{0}a\mul{1}a'$
        ($a,a'\in A$) and let $g$ be a Goldman element of $A$.
        It is easy to check that  $k\mapsto kg$ is an involution of $K$.

        (ii) Throughout, $D=A\otimes_C A^\sigma$ and $E=D^G=\Cor_{C/C_0}(D)$.
        Recall that $\sigma$ extends to an automorphism of $D$, also denoted  $\sigma$,
        given by $\sigma(a\otimes a')= a'\otimes a$.
        By Theorems~\ref{AZ:TH:being-mor-eq-to-ring-with-inv} and~\ref{AZ:TH:Bass-criterion},
        it is enough to prove that $\Cor_{C/C_0}(A)\Breq C_0$ if and only if then there exists  a double $A$-progenerator
        of type $\sigma$
        admitting an involution.

        Assume $K$ is a double $A$-progenerator of type $\sigma$ with involution $\theta$.
        Then $K$ can be considered as an $(A^\op,A^\sigma)$-progenerator of type $\id_C$, hence by
        Proposition~\ref{AZ:PR:progenerators-between-Azu-algs}, $\End({}_CK)\cong A\otimes A^\sigma=D$.
        Extend the left action of $C$ on $K$ to a left action of $\Delta:=\Delta(C/C_0,G,1)$ on $K$ by
        letting $u_\sigma$ act as $\theta$. Then it is easy to see that $\End({}_{\Delta}K)\cong D^G=E$.
        Viewing $\Delta^\op$ and $E$ as subrings of $\End({}_{C_0}K)$, this means $E$ is the centralizer
        of $\Delta^\op$ in $\End({}_{C_0}K)$. As all these algebras are Azumaya over $C_0$,
        we have $\Delta^\op\otimes_{C_0} E\cong \End({}_{C_0}K)$. In $\Br(C_0)$ this reads as $[C_0]\otimes [E]=[\Delta^\op]\otimes [E]=[C_0]$,
        so $\Cor_{C/C_0}(A)=E\Breq C_0$.

        Conversely, assume $E=\Cor_{C/C_0}(A)\Breq C_0$.
        Then there exists a $(C_0,E)$-progenerator $Q$ of type $\id_{C_0}$. Let $K=Q\otimes_{C_0}C$.
        Then by Proposition~\ref{AZ:PR:standard-hom-III}, $K$ is a $(C_0\otimes_{C_0} C,E\otimes_{C_0} C)$-progenerator.
        Identify $E\otimes_{C_0}C$ with $D$ via $(e\otimes c)\mapsto ec$. Then
        $K$ is a $(C,D)$-progenerator, hence by Proposition~\ref{AZ:PR:progenerators-between-Azu-algs},
        $K$ is an $(A^\op, A^\sigma)$-progenerator
        via $a^\op\cdot k\cdot a':=k(a\otimes a')$. We may thus view $K$
        as a double $A$-progenerator of type $\sigma$.
        Let $\theta=\sigma\otimes_{C_0}\id_Q:K\to K$ (so $(c\otimes q)^\theta=\sigma(c)\otimes q$).
        We claim that $\theta$ is an involution of $K$. Indeed, $\theta^2=\id_K$
        and for all $c,c'\in C$, $e\in E$, $q\in Q$, we have $((c\otimes q)(c'e))^\theta=
        (cc'\otimes qe)^\theta=\sigma(c)\sigma(c')\otimes qe=(\sigma(c)\otimes q)(\sigma(c')e)=(c\otimes q)^\theta(\sigma(c'e))$
        ($e=\sigma(e)$ since $e\in E=D^G$).
        This implies that $(kx)^\theta=k^\theta\sigma(x)$ for all $k\in K$, $x\in D$. Putting
        $x=1\otimes a$ and $x=a\otimes 1$ yields that $\theta$ is an involution.
    \end{proof}

    \begin{remark}\label{AZ:RM:after-Saltman-thm}
        (i) The proof of Theorem~\ref{AZ:TH:Saltman} also shows that a $C$-algebra $R$ with a Goldman
        element is Morita equivalent to its opposite over $C$ if and only if it is Morita equivalent as a $C$-algebra
        to an algebra with an involution of the first kind. We could not find a non-Azumaya algebra admitting
        a Goldman element, though.

        (ii) A slightly different proof of the ``only if'' part
        of Theorem~\ref{AZ:TH:Saltman}(ii) that does not use double progenerators appears in \cite[pp.\ 531]{Sa78}.
    \end{remark}

    In order to explain the connection of the previous proof with the Knus-Parimala-Srinivas proof of
    Saltman's Theorem (\cite[\S4]{KnParSri90}), let us backtrack our proof to see what is the algebra $B$.
    According to Theorem~\ref{AZ:TH:being-mor-eq-to-ring-with-inv}, $B=\End_A(P\oplus P^{[1]})$
    with $P$ an arbitrary $A$-progenerator, and the involution of $B$ is induced by the bilinear form $b_P$
    constructed in the proof of Theorem~\ref{AZ:TH:being-mor-eq-to-ring-with-inv}. The functor $[1]$
    is computed using a double $A$-progenerator $K$ with involution which is obtained in the proof of
    Theorem~\ref{AZ:TH:Saltman}. Let us choose $P=A_A$.
    Then $B\cong\End_A(A\oplus K_1)$ (since $A^{[1]}=\Hom_A(A_A,K_0)\cong K_1$ via $f\mapsto f(1_A)$).
    This algebra with involution is essentially the algebra with involution constructed in \cite[\S4]{KnParSri90}.
    However, in \cite{KnParSri90}, the involution of $B$ is constructed
    directly, while here we have obtained it from a general bilinear form on the
    right $A$-module $A\oplus K_1$, and hence suppressed some of the computations of \cite{KnParSri90}.

    We also note that many proofs of special cases of Saltman's Theorem in the literature,
    \cite{KnParSri90} and \cite{Sa78} in particular,
    involve the construction of an involutary map $\theta$ taking some $(A^\op,A)$-bimodule or an equivalent object $K$ into itself
    and satisfying $(a^\op kb)^\theta=b^\op k^\theta a$ ($a,b\in A$, $k\in K$).
    For example, consider
    the map $\alpha$ in \cite[pp.~532--3, 537]{Sa78}, the maps $\psi$ and $\sigma_P$ in \cite[pp.~71--2]{KnParSri90}
    and the map $u$ of \cite[pp.~196]{Jac96FinDimDivAlg}.
    This hints that these proofs can be effectively phrased using general bilinear forms.

\subsection{Rank Bounds}

    We proceed by showing that the algebra $B$ in Saltman's Theorem can be taken
    to have $\rank(B)=4\rank(A)$ in part (i) and  $\rank(B)=4\min\{\rank(A),\rank(A^\sigma)\}$ in part (ii).
    This  follows from the following technical lemmas.

    \begin{lem}\label{AZ:LM:rank-of-hom}
        Let $A$ be an Azumaya  $C$-algebra and let $M,N\in\rproj{A}$.
        Then
        \[\rank(\Hom_A(M,N))=\frac{\rank(M)\cdot\rank(N)}{\rank(A)}\ .\]
    \end{lem}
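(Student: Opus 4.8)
The plan is to verify the asserted identity of rank functions pointwise on $\Spec(C)$ and, at each prime, to reduce to a dimension count for a central simple algebra over a field.

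First I would record that $M$, $N$ and $\Hom_A(M,N)$ all lie in $\rproj{C}$, so that their ranks are defined. Indeed, $A$ is a progenerator, hence finitely generated projective, as a $C$-module, so $A^n\in\rproj{C}$ for every $n$; as $M$ is a direct summand of some $A^n$ as a right $A$-module (hence also as a $C$-module), $M\in\rproj{C}$, and likewise $N\in\rproj{C}$; and writing $A^n\cong M\oplus M'$ exhibits $\Hom_A(M,N)$ as a direct summand of $\Hom_A(A^n,N)\cong N^n\in\rproj{C}$, so $\Hom_A(M,N)\in\rproj{C}$. Moreover $\rank(A)(P)\geq 1$ for every $P\in\Spec(C)$, since an Azumaya algebra is a $C$-progenerator and a progenerator has positive rank everywhere; thus the quotient on the right-hand side makes sense pointwise.

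Next, fix $P\in\Spec(C)$ and let $k=k_P$ be its residue field, regarded as a $C$-algebra; write $M_k=M\otimes_C k$, and similarly for $N$, $A$ and $\Hom_A(M,N)$. Since ranks are stable under base change (Section~\ref{section:azumaya}), it suffices to prove $\dim_k(\Hom_A(M,N)\otimes_C k)\cdot\dim_k A_k=\dim_k M_k\cdot\dim_k N_k$. By Proposition~\ref{AZ:PR:standard-hom-III}, applied with $S=k$ and using that $M_A$ is finitely generated projective, $\Hom_A(M,N)\otimes_C k\cong\Hom_{A_k}(M_k,N_k)$; furthermore $A_k$ is an Azumaya $k$-algebra, i.e.\ a central simple $k$-algebra, while $M_k$ and $N_k$ are finite-dimensional right $A_k$-modules. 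So the lemma reduces to the statement that for a finite-dimensional simple $k$-algebra $A$ and finite-dimensional right $A$-modules $M$, $N$ one has $\dim_k\Hom_A(M,N)\cdot\dim_k A=\dim_k M\cdot\dim_k N$.

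To finish, I would invoke Wedderburn's theorem: $A\cong\nMat{D}{m}$ for a division $k$-algebra $D$, the unique simple right $A$-module $V$ satisfies $\End_A(V)\cong D$, and $A_A\cong V^m$, whence $\dim_k V=m\dim_k D$. Writing $M\cong V^p$ and $N\cong V^q$, we get $\dim_k M=pm\dim_k D$ and $\dim_k N=qm\dim_k D$, while $\Hom_A(M,N)\cong\Hom_A(V^p,V^q)$ is a $q\times p$ matrix module over $\End_A(V)\cong D$, of $k$-dimension $pq\dim_k D$; hence $\dim_k\Hom_A(M,N)\cdot\dim_k A=pq\dim_k D\cdot m^2\dim_k D=(pm\dim_k D)(qm\dim_k D)=\dim_k M\cdot\dim_k N$. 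The computation is entirely routine once the reduction is in place; the only points needing care are the verification that $M$, $N$ and $\Hom_A(M,N)$ are finitely generated projective $C$-modules and the use of Proposition~\ref{AZ:PR:standard-hom-III} to commute $\Hom$ with base change to the residue fields — which is where the hypothesis that $M$ is projective over $A$ is used.
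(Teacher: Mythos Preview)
Your proof is correct and follows essentially the same strategy as the paper: reduce pointwise on $\Spec(C)$ via Proposition~\ref{AZ:PR:standard-hom-III} and then do a Wedderburn-type dimension count. The only cosmetic difference is that the paper base-changes all the way to an algebraic closure $F$ of $k_P$, so that $A_F\cong\nMat{F}{n}$ and no division ring appears in the final computation, whereas you stop at $k_P$ and carry the division ring $D$ through; your added verification that $M$, $N$, $\Hom_A(M,N)\in\rproj{C}$ is a detail the paper leaves implicit.
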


    \begin{proof}
        Let $P\in\Spec(C)$ and let $F$ be an algebraic closure of $k_P$, the fraction field of $C/P$.
        Set $A_F=A\otimes_C F$, $M_F=M\otimes_C F$ and $N_F=N\otimes_C F$.
        By Proposition~\ref{AZ:PR:standard-hom-III}, we have $\Hom_{A_F}(M_F,N_F)\cong \Hom_{A}(M, N)\otimes_CF\cong
        (\Hom_{A}(M,N)\otimes_C k_P)\otimes_{k_P}F$.
        Therefore, it is enough to verify the lemma in case $C$ is an algebraically closed field, i.e.\ when $C=F$,
        in which case $\rank_C(-)$ and $\dim_F(-)$ coincide.
        In this case, we must have $A\cong \nMat{F}{n}$ (as $F$-algebras) and $M\cong \nMat{F}{s\times n}$, $N\cong \nMat{F}{t\times n}$ (as
        $A$-modules)
        for suitable $s,t,n\in\N\cup\{0\}$. But then, $\dim_F\Hom_A(M,N)=\dim_F\nMat{F}{t\times s}=ts=\dim_F(N)\cdot \dim_F(M)/\dim_F(A)$,
        as required.
    \end{proof}

    \begin{lem}\label{AZ:LM:rank-of-double-mod}
        Let $A$ be an Azumaya $C$-algebra and let $K$ be a double $A$-progenerator of type $\sigma\in\Aut(C)$.
        Then
        \[\rank(K_0)=\sqrt{\rank(A)\rank(A^\sigma)}\quad \mathrm{and} \quad \rank(K_1)=\sqrt{\rank(A)\rank(A^{\sigma^{-1}})}\]
    \end{lem}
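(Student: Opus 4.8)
The plan is to realize $K$ --- with one of its two natural bimodule structures --- as a progenerator between two Azumaya $C$-algebras, and then read off the ranks from Proposition~\ref{AZ:PR:progenerators-between-Azu-algs}.

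Recall that a double $R$-module $K$ carries a natural $(R^\op,R)$-bimodule structure with left action $a^\op\cdot k=k\mul{1}a$ and right action $k\cdot b=k\mul{0}b$ (the compatibility of the two actions is exactly the double-module identity), and a second such structure obtained by interchanging $\mul{0}$ and $\mul{1}$. Since $K$ is a double $A$-progenerator, both are $(A^\op,A)$-progenerators in the Morita-theoretic sense. Fix the first one. In it, $c\in C$ acts on the left by $k\mapsto k\mul{1}c$ and on the right by $k\mapsto k\mul{0}c=k\mul{1}\sigma(c)$, so the two $C$-actions disagree. To repair this, I would replace the right-hand factor $A$ by $A^{\sigma^{-1}}$; this is again an Azumaya $C$-algebra (twisting the $C$-structure of an Azumaya algebra by an automorphism of $C$ preserves the Azumaya property --- this is recorded in Section~\ref{section:azumaya} for Galois twists and the argument is general --- while it changes neither the underlying ring nor the property of being a progenerator). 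Now the right action of $c$ is $k\cdot c=k\mul{0}\sigma^{-1}(c)=k\mul{1}c$, which agrees with the left action, so $K$ becomes an $(A^\op,A^{\sigma^{-1}})$-progenerator satisfying $ck=kc$ for all $c\in C$.

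By Proposition~\ref{AZ:PR:progenerators-between-Azu-algs}, $K$ is then a $(C,(A^\op)^\op\otimes_C A^{\sigma^{-1}})$-progenerator; in particular ${}_CK$ is a $C$-progenerator, and $\End_C({}_CK)\cong (A^\op)^\op\otimes_C A^{\sigma^{-1}}=A\otimes_C A^{\sigma^{-1}}$. Here the underlying left $C$-module ${}_CK$ is precisely $K_1$ (the left and right $C$-module structures coincide since $C$ is commutative). Applying Lemma~\ref{AZ:LM:rank-of-hom} with the Azumaya algebra taken to be $C$ itself gives $\rank(\End_C({}_CK))=\rank(K_1)^2$, while $\rank(A\otimes_C A^{\sigma^{-1}})=\rank(A)\rank(A^{\sigma^{-1}})$ by multiplicativity of rank under $\otimes_C$ for f.g.\ projective modules. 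Hence $\rank(K_1)^2=\rank(A)\rank(A^{\sigma^{-1}})$, and since ranks are non-negative this yields $\rank(K_1)=\sqrt{\rank(A)\rank(A^{\sigma^{-1}})}$. The formula for $K_0$ follows identically, starting instead from the second bimodule structure on $K$ (with $\mul{0}$ on the left) and twisting the right-hand factor to $A^{\sigma}$; then ${}_CK=K_0$ and $\End_C({}_CK)\cong A\otimes_C A^{\sigma}$.

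I expect the only genuinely delicate point to be bookkeeping --- getting the direction of the twist right ($A^{\sigma^{-1}}$ versus $A^{\sigma}$) and tracking which of the two natural bimodule structures computes $K_0$ and which computes $K_1$. The two facts invoked in passing (that $A^{\tau}$ is Azumaya over $C$ for $\tau\in\Aut(C)$, and that rank is multiplicative under $\otimes_C$ for finitely generated projective modules) are standard, the latter via $(M\otimes_C N)\otimes_C k_{\mathfrak p}\cong (M\otimes_C k_{\mathfrak p})\otimes_{k_{\mathfrak p}}(N\otimes_C k_{\mathfrak p})$.
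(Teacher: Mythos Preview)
Your argument is correct, but it takes a slightly longer route than the paper's. The paper works over $A$ rather than over $C$: since $K$ is an $(A^\op,A)$-progenerator of type $\sigma$, one has $\End_A(K_0)\cong (A^\sigma)^\op$ \emph{as $C$-algebras} directly from the definition of type, so $\rank(\End_A(K_0))=\rank(A^\sigma)$; then Lemma~\ref{AZ:LM:rank-of-hom} applied with the Azumaya algebra $A$ gives $\rank(\End_A(K_0))=\rank(K_0)^2/\rank(A)$, and comparing the two expressions yields the formula. The case of $K_1$ is symmetric.

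Your approach instead twists to obtain an $(A^\op,A^{\sigma^{-1}})$-progenerator with matching $C$-actions, invokes Proposition~\ref{AZ:PR:progenerators-between-Azu-algs} to identify $\End_C(K_1)\cong A\otimes_C A^{\sigma^{-1}}$, and then applies Lemma~\ref{AZ:LM:rank-of-hom} over $C$. This is perfectly valid, and the bookkeeping you flag (direction of the twist, which bimodule gives $K_0$ versus $K_1$) is done correctly. The cost is the extra machinery: you need Proposition~\ref{AZ:PR:progenerators-between-Azu-algs}, the fact that $A^\tau$ is Azumaya, and (implicitly) that the ``swapped'' bimodule structure on $K$ is again a progenerator --- all true, but the paper's argument sidesteps every one of these by reading off $\End_A(K_0)$ immediately from the type.
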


    \begin{proof}
        Consider $K$ as an $(A^\op,A)$-bimodule as explained in section~\ref{section:double-prog}.
        Since $A$ is of type $\sigma$, $\End_A(K_0)=\End(K_A)\cong (A^\sigma)^\op$
        as \emph{$C$-algebras}, hence $\rank(\End_A(K_0))=\rank(A^\sigma)$.
        However, by Lemma~\ref{AZ:LM:rank-of-hom}, we have $\rank(\End_A(K_0))=\rank(K_0)^2/\rank(A)$. Comparing both
        expressions yields the  formula for $\rank(K_0)$. The formula for $\rank(K_1)$ is shown in the same manner.
    \end{proof}

    \begin{prp}\label{AZ:PR:rank-bound}
        In Theorem~\ref{AZ:TH:Saltman}(i) (resp.\ Theorem~\ref{AZ:TH:Saltman}(ii)),
        the algebra $B$ can be
        chosen such that $\rank(B)=4\rank(A)$ (resp.\
        $\rank(B)=4\min\{\rank(A),\rank(A^\sigma)\}$).
    \end{prp}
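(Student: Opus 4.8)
The plan is to exhibit $B$ explicitly by revisiting the proof of Theorem~\ref{AZ:TH:Saltman} and then compute its rank using Lemmas~\ref{AZ:LM:rank-of-hom} and~\ref{AZ:LM:rank-of-double-mod}. Recall from the discussion following Theorem~\ref{AZ:TH:Saltman} that in both parts one may take
\[
B=\End_A(A\oplus A^{[1]})=\End_A(A\oplus K_1),
\]
where $K$ is the double $A$-progenerator with involution produced in the proof (it has type $\id_C$ in part (i) and type $\sigma$ in part (ii)), the functor $[1]$ is taken with respect to $K$, and $A^{[1]}=\Hom_A(A_A,K_0)\cong K_1$ via $f\mapsto f(1_A)$. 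Since $A\oplus K_1$ is an $A$-progenerator and $A$ is Azumaya, Lemma~\ref{AZ:LM:rank-of-hom} gives
\[
\rank(B)=\frac{\rank(A\oplus K_1)^2}{\rank(A)}=\frac{\bigl(\rank(A)+\rank(K_1)\bigr)^2}{\rank(A)},
\]
so the statement reduces to identifying $\rank(K_1)$.

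For part (i), $K$ has type $\id_C$, so Lemma~\ref{AZ:LM:rank-of-double-mod} gives $\rank(K_1)=\sqrt{\rank(A)\rank(A^{\id_C})}=\rank(A)$, and the displayed identity yields $\rank(B)=(2\rank(A))^2/\rank(A)=4\rank(A)$. For part (ii) the one point requiring an argument is that the hypothesis forces $\rank(A)=\rank(A^\sigma)$. I would obtain this as follows: since $K$ carries an involution $\theta$, Lemma~\ref{AZ:LM:u-defin} supplies a natural isomorphism $[0]\cong[1]$ of functors on $\rproj{A}$, and Lemma~\ref{AZ:LM:right-reg-iff-left-reg}(ii) says $[0]$ and $[1]$ are mutually inverse there; combining these, $[1]\circ[1]\cong\id_{\rproj{A}}$, so in particular $A^{[1][1]}\cong A_A$. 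Now $\rank(A^{[1]})=\rank(K_1)$, and because $\sigma^2=\id_C$ (the Galois group is $\{1,\sigma\}$) Lemma~\ref{AZ:LM:rank-of-double-mod} gives $\rank(K_0)=\rank(K_1)=\sqrt{\rank(A)\rank(A^\sigma)}$; hence, applying Lemma~\ref{AZ:LM:rank-of-hom} once more,
\[
\rank(A)=\rank(A^{[1][1]})=\frac{\rank(K_1)\rank(K_0)}{\rank(A)}=\frac{\rank(A)\rank(A^\sigma)}{\rank(A)}=\rank(A^\sigma).
\]
(When $\Spec C$ is connected this identity is immediate, since then $\rank(A)$ is constant and equals $\rank(A^\sigma)$.)

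Granting this, $\min\{\rank(A),\rank(A^\sigma)\}=\rank(A)$ and $\rank(K_1)=\rank(A)$, so the displayed formula gives, exactly as in part (i), $\rank(B)=4\rank(A)=4\min\{\rank(A),\rank(A^\sigma)\}$. The main — indeed the only — obstacle is the rank identity $\rank(A)=\rank(A^\sigma)$ in part (ii); the rest is a direct substitution into $\rank(\End_A(M))=\rank(M)^2/\rank(A)$ applied to the algebra $B$ already constructed in the proof of Saltman's theorem.
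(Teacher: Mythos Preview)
Part (i) is correct and matches the paper. The problem is in part (ii): the identity $\rank(A)=\rank(A^\sigma)$ that you derive is false in general. Take $C=F\times F$ for a field $F$, $C_0=F$ embedded diagonally, $\sigma$ the swap of factors (so $C/C_0$ is Galois with group $\{1,\sigma\}$), and $A=\nMat{F}{2}\times\nMat{F}{3}$. Then $\Cor_{C/C_0}(A)\cong\nMat{F}{6}\Breq C_0$, yet $\rank(A)$ takes the values $(4,9)$ on the two points of $\Spec C$ while $\rank(A^\sigma)$ takes the values $(9,4)$. With this $A$, your $B=\End_A(A\oplus K_1)$ has constant rank $25$, not $4\min\{\rank(A),\rank(A^\sigma)\}=16$.

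The error is the step $\rank(A^{[1][1]})=\rank(K_1)\rank(K_0)/\rank(A)$. The isomorphism $A^{[1][1]}\cong A_A$ is fine, so $\rank(A^{[1][1]})=\rank(A)$ \emph{when $A^{[1][1]}$ carries the $C$-structure inherited from its right $A$-module structure via $[1]$}, namely $(fc)(k)=f(k)\mul{1}c$. But Lemma~\ref{AZ:LM:rank-of-hom} applied to $\Hom_A(K_1,K_0)$ computes the rank for the \emph{standard} $C$-structure $(fc)(k)=f(k)\mul{0}c$ (equivalently $f(k\mul{1}c)$, by $A$-linearity). Since $K$ has type $\sigma$, these two $C$-actions differ by $\sigma$, and the corresponding rank functions differ by the automorphism of $\Spec C$ induced by $\sigma$; they coincide precisely when $\rank(A)$ is $\sigma^*$-invariant, which is what you are trying to prove. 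Your parenthetical is the honest statement: the argument works when $\Spec C$ is connected (then $\rank(A)$ is constant and automatically $\sigma^*$-invariant), but not in general.

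The paper does not try to prove $\rank(A)=\rank(A^\sigma)$. Instead it first replaces $A$ by a Brauer-equivalent algebra $A'$ with $\rank(A')=\min\{\rank(A),\rank(A^\sigma)\}$: decompose $C=\prod_iC_i$ so that both ranks are constant on each factor, and on the $i$-th factor take $e_iA$ or $e_i(A^\sigma)^\op$ according to which has smaller rank. Since $\sigma^2=\id_C$, the function $\min\{\rank(A),\rank(A^\sigma)\}$ is invariant under the action of $\sigma$ on $\Spec C$, whence $\rank(A'^\sigma)=\rank(A')$; your displayed formula applied to $A'$ then yields $\rank(B)=4\rank(A')=4\min\{\rank(A),\rank(A^\sigma)\}$.
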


    \begin{proof}
        Let $\sigma$ be $\id_C$ in the case of  Theorem~\ref{AZ:TH:Saltman}(i) and
        a nontrivial Galois automorphism of $C/C_0$ in the case of Theorem~\ref{AZ:TH:Saltman}(ii).
        In the comment after Remark~\ref{AZ:RM:after-Saltman-thm} we saw that
        $B$ can taken to be $\End_A(A\oplus K_1)$ with $K$ a double $A$-progenerator of type $\sigma$.
        In this case, by Lemmas~\ref{AZ:LM:rank-of-double-mod} and~\ref{AZ:LM:rank-of-hom},
        we have
        \[\rank(B)=\frac{\Circs{\rank(A)+\sqrt{\rank(A)\rank(A^\sigma)}}^2}{\rank(A)}=\Bigg(1+\sqrt{\frac{\rank(A^\sigma)}{\rank(A)}}\Bigg)^2\rank(A)\ .\]
        In the case of  Theorem~\ref{AZ:TH:Saltman}(i), this means $\rank(B)=4\rank(A)$, so we are done.
        For the case of Theorem~\ref{AZ:TH:Saltman}(ii), it is enough
        to show that $A$ is Brauer equivalent to an Azumaya algebra $A'$ of
        rank  $\min\{\rank(A),\rank(A^\sigma)\}$ (for then we can
        replace $A$ with $A'$ and get the desired rank).

        Recall from section~\ref{section:azumaya} that we can decompose $C$ as a product
        $C=C_1\times \dots\times C_t$ such that $\rank(A\otimes_C C_i)$ and $\rank(A^\sigma\otimes_C C_i)$ are
        constant for all $i$.
        Let $e_i\in C$ be the idempotent satisfying $C_i=e_iC$. We  identify $\Spec(C)$ with $\bigcup \Spec(C_i)$
        in the standard way.
        It is easy to see that every
        $C$-algebra $R$ factors as a product $\prod_{i}e_iR$ and $R/C$ is Azumaya if and only if
        $e_iR/e_iC$ is Azumaya for all $i$.
        Define $A'=\prod_iA'_i$ where
        $A'_i=e_iA$ if $\rank_{C_i}(e_iA)<\rank_{C_i}(e_i(A^\sigma))$ and $A'_i=e_i(A^\sigma)^\op$ otherwise.
        Then $A'\Breq A$ (since $e_iA'\Breq e_iA$ for all $i$),
        and, by definition, $\rank_C(A')=\min\{\rank_C(A),\rank_C(A^\sigma)\}$, as required.
    \end{proof}

\subsection{The Semilocal Case}
\label{subsection:semilocal-case}

        In \cite[\S4]{Sa78}, Saltman also showed that when $C$ is semilocal and connected, one
        can take $B=A$ in Theorem~\ref{AZ:TH:Saltman} (Albert's Theorems follow as the special case
        where $C$ is a field).
        Let us recover the proof using the language and the tools of the previous sections:

        Since $C$ is connected, $A$ has constant rank,
        say $\rank(A)=n^2$.
        Suppose the assumptions of parts (i) or (ii)
        of Theorem~\ref{AZ:TH:Saltman} are satisfied, and set $\sigma=\id_C$ for part (i).
        By the proof of Theorem~\ref{AZ:TH:Saltman}, there is
        a double $A$-module $K$ of type $\sigma$ admitting an involution $\theta$. We claim that
        $K$ is the standard double $A$-module of some anti-automorphism $\gamma:A\to A$ (see section~\ref{section:transfer}).
        Indeed, by assumption, $\rank(A)=n^2=\rank(A^\sigma)$, hence by Lemma~\ref{AZ:LM:rank-of-double-mod},
        $\rank(K_1)=\rank(A)$. We now claim that for $P,Q\in \rproj{A}$, $\rank(P)=\rank(Q)$ implies $P\cong Q$.
        By tensoring with $C/\Jac(C)$, we may assume $C$ is a finite
        product of fields and $A$ is a finite product of central simple algebras, in which case the
        claim is routine. Therefore, $K_1\cong A_A$, and arguing as in the proof of Theorem~\ref{AZ:TH:anti-autos-are-transposes}
        yields the desired $\gamma$.

        We can now apply the proof of Theorem~\ref{AZ:TH:involution-transfer}
        with $R=A$ (which is semilocal) to assert that $A$ has an involution of type $\sigma$.
        However, we need to  verify the assumptions of the theorem, so let $n_i$, $D_i$ and $F_i$
        be defined as in Theorem~\ref{AZ:TH:involution-transfer} for $R=A$.
        Since $\rank(A)=n^2$, we have
        $\dim_{F_i}\nMat{D_i}{n_i}=n^2$ for all $i$.
        Thus, when $n$ is even, for all $i$, either $n_i$ is even or $D_i$ is a division ring, as required.
        When $n$ is odd and $\sigma\neq\id_C$, set
        $\quo{A}=A/\Jac(A)$, $\quo{C}=C/\Jac(C)$ and $\quo{C}_0=C_0/\Jac(C_0)$. It is a standard fact that
        $\quo{A}/\quo{C}$ is Azumaya (so $\quo{C}=\Cent(\quo{A})$) and $\quo{C}/\quo{C}_0$ is Galois with
        Galois group $\{\quo{\sigma},\id\}$, where $\quo{\sigma}$ is the map induced by $\sigma$ on $\quo{C}$.
        Noting that $\quo{C}=F_1\times\dots\times F_t$ and $\rank_{\quo{C}_0}(\quo{C})=2$, we see that $\quo{\sigma}$ cannot fix
        any $F_i$ pointwise, so again the assumptions of Theorem~\ref{AZ:TH:involution-transfer} hold.
        Finally, in case $n$ is odd and $\sigma=\id_C$, we have $A^{\otimes 2}\Breq C$ (by assumption)
        and $A^{\otimes n}\Breq C$
        (by Theorem~\ref{AZ:TH:exponent-rank-relation}, since $\rank(A)=n^2$),
        which implies $A\Breq C$, i.e.\ $A\cong \End_C(P)$ for some $C$-progenerator $P$, necessarily
        of rank $n$. By what we have shown above, this implies $P\cong C^n$, so $A\cong\End_C(P)\cong\nMat{C}{n}$
        and $A$ has the transpose involution.

        We note that specializing in Theorem~\ref{AZ:TH:involution-transfer} to the case $R$ is an Azumaya $C$-algebra
        and $C$ is connected
        saves many technicalities in the last steps of the proof. In particular, one can use
        the Skolem-Noether Theorem rather than basing on Theorem~\ref{AZ:TH:anti-autos-are-transposes}; cf.\
        the argument given by Saltman in \cite[pp.\ 538]{Sa78}.

    \begin{remark}\label{AZ:RM:semilocal-Az-algs}
        Assume $C$ is semilocal or $\Q$-finite and let $\tau$ be \emph{any} automorphism of $C$.
        Then Corollary~\ref{AZ:CR:three-implies-two} (together with Theorem~\ref{AZ:TH:Bass-criterion})
        implies that $A$ is Brauer equivalent to an Azumaya algebra with an anti-automorphism of type $\tau$
        if and only if $A^\op\Breq A^\tau$. The semilocal part of this claim is somewhat standard since it is well-known
        that
        $A^\op\Breq A^\tau$ implies $A^\op\cong A^\tau$ in this case (e.g.\ see the previous paragraphs).
        However, the $\Q$-finite case seems to be unknown.
        When $\tau$ is a Galois automorphism of order $2$ (i.e.\ $C/C^{\{\tau\}}$ is
        Galois with Galois group $\{1,\tau\}$), the condition $A^\op\Breq A^\tau$ is equivalent to $A\otimes_C A^\tau
        \cong \Res_{C/C_0}(\Cor_{C/C_0}(A))\sim C$. In this special case, it could be that the claim is true
        without any assumption on $C$.
    \end{remark}

\section{Azumaya Algebras of Exponent $2$ Without Involution}
\label{section:no-inv}

    Let $C$ be a commutative ring and let $A$ be an Azumaya $C$-algebra with $A\Breq A^\op$.
    Then Saltman's Theorem asserts that there is some algebra $B\Breq A$ with involution of the first kind.
    However,
    it can be very tricky to construct examples of algebras $A$ with $A\Breq A^\op$ that do not have an involution already.
    For example, Saltman have shown in \cite[\S4]{Sa78} that this is impossible if $C$ is semilocal or $\rank(A)=4$.
    In particular, $A$ cannot be  a tensor product of quaternion algebras.
    (The proof of the semilocal case was recovered in subsection~\ref{subsection:semilocal-case}. In case $\rank(A)=4$, the map
    $a\mapsto \mathrm{Tr}(a)-a$ is an involution on $A$, where $\mathrm{Tr}(a)$ denotes the \emph{reduced trace} of $a$.)
    Saltman has pointed out in \cite[\S4]{Sa78} how to produce examples of Azumaya algebras without involution that are trivial in $\Br(C)$.
    However, we have not seen in the literature  examples of non-trivial such algebras.
    Of course, one can always take the product of a non-trivial
    quaternion algebra with a trivial Azumaya algebra without involution, but this results in $C$ being non-connected
    (i.e.\
    $C$ is a product of two rings). One is therefore interested in examples in which $C$ is connected. More generally,
    one can ask whether the bound $\rank(B)\leq 4\rank(A)$ of Proposition~\ref{AZ:PR:rank-bound} is tight.

    In this section, we shall use the results of the section~\ref{section:double-prog} to explain
    how to detect whether an Azumaya algebra admits an  anti-automorphisms. This will be used to construct
    an explicit example of a non-trivial Azumaya algebra over a Dedekind domain  $C$  that does not
    have an involution.
    However, this example does not show that the bound $\rank(B)\leq 4\rank(A)$ is optimal,
    and it also
    has zero divisors. (Even worse, we shall see that for $C$ as in the
    example, and also for many other low-dimensional domains,  we can always choose $B$ to be without
    zero divisors and to satisfy  $\rank(B)\leq\rank(A)$.)

    We thank Asher Auel for pointing us to this problem.

    \medskip

    Throughout, $C$ is a commutative ring and $\sigma\in\Aut(C)$. Unless specified otherwise, all tensor products
    are taken over $C$.
    Recall that the \emph{Picard group}, $\Pic(C)$, consists of the isomorphism classes of rank-$1$ projective
    $C$-modules, multiplication being  tensor product (over $C$). Equivalently, $\Pic(C)$ consists of isomorphism classes
    of $(C,C)$-progenerators
    of type $\id_C$. In case $C$ is a Dedekind domain (e.g.\ a number ring), $\Pic(C)$ is just the \emph{ideal class group} of $C$
    (e.g.\ see \cite{MaximalOrders}).

    \begin{thm}\label{AZ:TH:auti-auto-test}
        Let $A$ be an Azumaya $C$-algebra such that $A^\op\Breq A^\sigma$,  let $K$ be
        any $(A^\op,A^\sigma)$-progenerator of type $\id_C$ (cf.\ Theorem~\ref{AZ:TH:Bass-criterion}),
        which we also view as a double $A$-progenerator (of type $\sigma$),
        let $P$ be a right $A$-progenerator and set $P^{[1]}=\Hom_A(P,K_0)$.
        Then the following conditions are equivalent:
        \begin{enumerate}
            \item[(a)] $B:=\End_A(P)$ has an anti-automorphism of type $\sigma$.
            \item[(b)] There is $[I]\in\Pic(C)$ such that $I\otimes P\cong  P^{[1]}$ as $A$-modules.
            \item[(c)] $\End_A(P)\cong\End_A(P^{[1]})$ as $C$-algebras.
        \end{enumerate}
        In particular, if $I\otimes P\ncong P^{[1]}$ for all $[I]\in \Pic(C)$, then $\End_A(P)$
        does not have an anti-automorphism of type $\sigma$.
    \end{thm}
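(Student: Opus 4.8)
The plan is to prove the cycle of implications (b)$\derives$(a)$\derives$(c)$\derives$(b); the ``in particular'' clause then follows by taking the contrapositive of (b)$\derives$(a). The one structural fact underlying all three steps, which I would isolate as a lemma at the outset, is a Picard-group computation: since $A$, and hence $\End_A(P)$ and $A\otimes_C A^\sigma$, are Azumaya over $C$, any two $(A^\op,A^\sigma)$-progenerators of type $\id_C$ differ by tensoring over $C$ with an element of $\Pic(C)$, and likewise any right $A$-progenerator $Q$ with $\End_A(Q)\cong\End_A(P)$ as $C$-algebras satisfies $Q\cong I\otimes_C P$ for some $[I]\in\Pic(C)$. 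Both follow from the ``division'' of progenerators recalled in Section~\ref{section:Morita}: the quotient of two $(A^\op,A^\sigma)$-progenerators of type $\id_C$ is a $(C,C)$-progenerator of type $\id_C$, i.e.\ an element of $\Pic(C)$; and the quotient of $P$ and $P^{[1]}$ (which become $(\End_A(P),A)$-progenerators of type $\id_C$ once $\End_A(P^{[1]})$ is identified with $\End_A(P)$ via the given isomorphism) is a $(\End_A(P),\End_A(P))$-progenerator $V$ of type $\id_C$, and Proposition~\ref{AZ:PR:progenerators-between-Azu-algs} applied to the Azumaya algebra $\End_A(P)$, together with the classification of progenerators over an Azumaya algebra, forces $V\cong\End_A(P)\otimes_C I$ with $[I]\in\Pic(C)$, whence $P^{[1]}\cong V\otimes_{\End_A(P)}P\cong I\otimes_C P$. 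I expect this Azumaya/Picard bookkeeping to be the main obstacle; everything else is a routine application of the machinery of Section~\ref{section:double-prog}.

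For (b)$\derives$(a): given $[I]$ with $I\otimes_C P\cong P^{[1]}$, I would put $K':=K\otimes_C I^{-1}$, which is again a double $A$-progenerator of type $\sigma$, and note (using Proposition~\ref{AZ:PR:hom-of-tensor} to commute $\Hom_A(P,-)$ past $\otimes_C I^{-1}$) that $\Hom_A(P,K'_0)\cong P^{[1]}\otimes_C I^{-1}\cong P$. Fixing an isomorphism $f\colon P\to\Hom_A(P,K'_0)$, the formula $b(x,y)=(fy)(x)$ defines a bilinear form $P\times P\to K'$ with $\rAd{b}=f$; hence $b$ is right regular, so regular by Lemma~\ref{AZ:LM:right-reg-iff-left-reg}(iii), and by Theorem~\ref{AZ:TH:correspondence} its corresponding anti-automorphism $\alpha$ of $\End_A(P)$ exists. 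The faithfulness of $K'_1$ gives $\alpha|_C=\sigma$ exactly as in the proof of Proposition~\ref{AZ:PR:being-mor-eq-to-ring-with-anti-auto}, so $B=\End_A(P)$ has an anti-automorphism of type $\sigma$.

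For (a)$\derives$(c): from an anti-automorphism $\alpha$ of $\End_A(P)$ of type $\sigma$, Theorem~\ref{AZ:TH:correspondence} and Lemma~\ref{AZ:LM:double-prog-factory} produce the regular form $b_\alpha\colon P\times P\to K_\alpha$ with $K_\alpha$ a double $A$-progenerator of type $\sigma$. By the lemma above, $K_\alpha\cong K\otimes_C I$ for some $[I]\in\Pic(C)$, so $\rAd{b_\alpha}$ is an isomorphism $P\cong\Hom_A(P,(K_\alpha)_0)\cong P^{[1]}\otimes_C I$, whence $\End_A(P)\cong\End_A(P^{[1]}\otimes_C I)\cong\End_A(P^{[1]})\otimes_C\End_C(I)\cong\End_A(P^{[1]})$ as $C$-algebras (again Proposition~\ref{AZ:PR:hom-of-tensor}, plus $\End_C(I)\cong C$), which is (c). Finally, (c)$\derives$(b) is exactly the second half of the lemma applied to $Q=P^{[1]}$, and the contrapositive of (b)$\derives$(a) gives the closing sentence: if $I\otimes P\ncong P^{[1]}$ for all $[I]\in\Pic(C)$, then $\End_A(P)$ has no anti-automorphism of type $\sigma$.
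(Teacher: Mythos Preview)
Your argument is correct, but it is organized differently from the paper and works a bit harder than necessary. The paper runs the cycle in the order (a)$\Rightarrow$(b)$\Rightarrow$(c)$\Rightarrow$(a). Your (a)$\Rightarrow$(c) is essentially the paper's (a)$\Rightarrow$(b) followed by its (b)$\Rightarrow$(c) (you pass through $P\cong P^{[1]}\otimes_C I$ on the way), and your (b)$\Rightarrow$(a) is the same bilinear-form construction that underlies Proposition~\ref{AZ:PR:being-mor-eq-to-ring-with-anti-auto}. The substantive divergence is in closing the loop: you prove (c)$\Rightarrow$(b) via the Picard classification of $(\End_A(P),\End_A(P))$-progenerators of type $\id_C$, whereas the paper proves (c)$\Rightarrow$(a) in one line by observing that $w\mapsto w^{[1]}$ is an anti-isomorphism $\End_A(P)\to\End_A(P^{[1]})$ of type $\sigma$ (because $[1]$ is a duality on $\rproj{A}$ by Lemma~\ref{AZ:LM:right-reg-iff-left-reg}(ii)) and then composing with the given $C$-isomorphism $\psi\colon\End_A(P^{[1]})\to\End_A(P)$. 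This bypasses your second Picard lemma entirely and makes the proof shorter; on the other hand, your (c)$\Rightarrow$(b) establishes a fact of independent interest (that for Azumaya $A$, right $A$-progenerators with $C$-isomorphic endomorphism rings differ by an element of $\Pic(C)$), which the paper's argument does not need and does not record.
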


    \begin{proof}
        (a)$\derives$(b): By Proposition~\ref{AZ:PR:existence-of-anti-auto-crit}, there exists a double $A$-progenerator
        $K'$ of type $\sigma$ and a regular bilinear form
        $b:P\times P\to K'$. View $K'$ as a right $A\otimes A^\sigma$-module via $k(a\otimes a')=k\mul{0}a\mul{1}a'$
        and view $K$ as a right $A\otimes A^\sigma$ in the standard way. By Proposition~\ref{AZ:PR:progenerators-between-Azu-algs},
        both $K$ and $K'$ are $(C,A\otimes A^\sigma)$ progenerators. Thus, there exists a $(C,C)$-progenerator
        $I$ such that $K'=I\otimes_C K$. As both $K$ and $K'$ have type $\sigma\in\Aut(C)$, $I$ is of type $\id_C$
        and hence $[I]\in\Pic(C)$. Now, by
        Proposition~\ref{AZ:PR:hom-of-tensor}, we have
        $P\cong \Hom(P,K'_0)\cong \Hom_{C\otimes A}(C\otimes P,I\otimes K_0)=\Hom_C(C,I)\otimes\Hom_A(P,K_0)\cong
        I\otimes P^{[1]}$.

        (b)$\derives$(c): We have $\End_C(I)=C$ since $I$ is a $(C,C)$-progenerator of type $\id_C$, so
        by Proposition~\ref{AZ:PR:hom-of-tensor}, $\End_A(P^{[1]})\cong\End_{C\otimes A}(I\otimes P,I\otimes P)\cong
        \End_C(I)\otimes \End_A(P)\cong C\otimes \End_A(P)\cong \End_A(P)$.
        The isomorphism is easily seen to be a $C$-algebra isomorphism.

        (c)$\derives$(a): Let $\psi:\End_A(P^{[1]})\to\End_A(P)$ be an isomorphism.
        The map $w\mapsto w^{[1]}:\End_A(P)\to \End_A(P^{[1]})$ is an anti-\emph{homo}morphism of rings (since
        $[1]$ is an additive contravariant functor).
        Moreover, it is an anti-\emph{iso}morphism of rings since $[1]:\rproj{A}\to\rproj{A}$ is a duality
        of categories (Lemma~\ref{AZ:LM:right-reg-iff-left-reg}(ii)). It is also straightforward to check that
        for $c\in C\subseteq \End_A(P)$, one has $c^{[1]}=\sigma(c)$.
        Thus, $\alpha:\End_A(P)\to\End_A(P)$ defined by $w^\alpha=\psi(w^{[1]})$ is an anti-automorphism
        of type $\sigma$.
    \end{proof}

    \begin{remark}
        By applying Theorem~\ref{AZ:TH:auti-auto-test} with $P=A_A$, we can test whether $A$ itself has
        an involution. In this case, $P^{[1]}$ is just $K_1$ (via $f\mapsto f(1)$), so we only need
        to verify that $I\otimes A\ncong K_1$ as $A$-modules for all $[I]\in \Pic(C)$.
    \end{remark}

    We shall now use Theorem~\ref{AZ:TH:auti-auto-test} to construct an Azumaya algebra
    $B$ over a Dedekind domain $C$ such that $B\otimes B\Breq C$, $B\nBreq C$ and $B$ does not
    have a  $C$-anti-automorphism. For a fractional ideal $I$ of $C$, we shall
    write $I^{\otimes n}$ for the $n$-th power of $I$ as a fraction ideal,
    and $I^{\oplus n}$ for the $C$-module $I\oplus \dots\oplus I$ ($n$ times). The ambiguous notation $I^n$ shall
    be avoided henceforth, except for writing $I^{-1}$ to denote the inverse of $I$.
    We shall make extensive usage of some well-known facts about projective modules over  Dedekind domain, such as:
    Every f.g.\ projective module over a Dedekind domain $C$ is a direct sum of fractional ideals, and for
    fractional ideals $I_1,\dots,I_n,J_1,\dots,J_n$,
    we have $\bigoplus_{i=1}^n I_i\cong \bigoplus_{i=1}^n J_i$ if and only if
    $[\prod_i I_i]=[\prod_i J_i]$ in $\Pic(C)$. We refer the reader to \cite{MaximalOrders} for proofs.

    \begin{example}\label{GEN:EX:last-example}
        Let $C$ is a Dedekind domain admitting a \emph{non-trivial} quaternion Azumaya $C$-algebra $A$.
        By \cite[Th.~4.1]{Sa78}, $A$ has an involution
        of the first kind $\alpha$, which gives rise to a double $A$-module $K$ with $K_1\cong A_A$ (see section~\ref{section:transfer}).
        Viewing $A$ as a $C$-module, we can write $A_C\cong C^{\oplus3}\oplus J$  with $J$ a fractional ideal.

        Suppose further that there is $[L]\in\Pic(C)$
        such that $[L^{\otimes 8}]$ is not a $16$-th power in $\Pic(C)$.
        We claim that the Azumaya $C$-algebra
        \[B:=\End_C(C^{\oplus 3}\oplus L) \otimes A ,\]
        does not have an anti-automorphism of type $\id_C$,
        despite clearly satisfying $B\otimes B\Breq C$ and $B\nBreq C$.
        We show this by applying Theorem~\ref{AZ:TH:auti-auto-test} with $P= (C^{\oplus 3}\oplus L) \otimes A_A$.
        (The module $P$
        is a right $C\otimes A$-module which we view as a right $A$-module in the standard way. We have
        $\End_A(P)\cong B$ by Proposition~\ref{AZ:PR:hom-of-tensor}.)
        In fact, we shall show that $I\otimes P\ncong P^{[1]}$ as \emph{$C$-modules} for all $[I]\in\Pic(C)$.

        Indeed, observe that $P^{[1]}=\Hom_A(P,K_0)=\Hom_{C\otimes A}((C^{\oplus3}\oplus L)\otimes A_A, C\otimes K_0)\cong
        \Hom_C(C^{\oplus3}\oplus L,C)\otimes \Hom_A(A_A,K_0)\cong(C^{\oplus3}\oplus L^{-1})\otimes K_1$.
        Thus, we have
        \begin{eqnarray*}
        P^{[1]}&\cong& (C^{\oplus 3}\oplus L^{-1})\otimes K_1\cong (C^{\oplus3}\oplus L^{-1})\otimes (C^{\oplus 3}\oplus J)\\
        &\cong &
        C^{\oplus 9}\oplus (L^{-1})^{\oplus 3}\oplus J^{\oplus 3}\oplus L^{-1}J\cong C^{15}\oplus (L^{-1})^{\otimes 4}J^{\otimes 4}\ .
        \end{eqnarray*}
        (as $C$-modules).
        On the other hand, for $[I]\in \Pic(C)$,
        \begin{eqnarray*}
        I\otimes P &=& I\otimes (C^{\oplus 3}\oplus L)\otimes A\cong
        I\otimes (C^{\oplus 3}\oplus L)\otimes (C^{\oplus 3}\oplus J)\\
        &\cong& I^{\oplus 9}\oplus (IJ)^{\oplus3}\oplus(IL)^{\oplus3}\oplus
        (ILJ)\cong C^{\oplus15}\oplus I^{\otimes 16}L^{\otimes 4}J^{\otimes 4}\ .
        \end{eqnarray*}
        It follows that $I\otimes P\cong P^{[1]}$ as $C$-modules
        if and only if $[I^{\otimes 16}L^{\otimes 4}J^{\otimes 4}]=[(L^{-1})^{\otimes 4}J^{\otimes 4}]$ in $\Pic(C)$, or equivalently, if
        $[(I^{-1})^{\otimes 16}]=[L^{\otimes 8}]$. But this is impossible by the assumption on $L$.

        We remark that if $L$ is chosen such that $[L^{\otimes 16}] =[C]$, then
        $(C^{\oplus3}\oplus L)^{\oplus 16}\cong C^{\oplus 64}$. In this case, $\nMat{B}{16}\cong A\otimes\nMat{C}{16}$, so $\nMat{B}{16}$
        has an involution of type $\id_C$. As a result,
        one of the algebras $B$, $\nMat{B}{2}$, $\nMat{B}{4}$, $\nMat{B}{8}$
        does not an involution of type $\id_C$ while its $2\times 2$ matrix algebra does  have such
        involution. In addition, if $\Aut(C)=\id_C$ (e.g.\ if the fraction field of $C$ has no nontrivial automorphisms),
        then $B$ has no anti-automorphisms at all.

        Explicit choices of $C$, $A$ and $L$ satisfying all previous conditions (including $[L^{\otimes 16}] =[C]$ and $\Aut(C)=\id_C$)
        are the following: Let
        $D$ be the integer ring of $K:=\Q[x \where x^3+x+521=0]$, and
        take
        \[C=D[2^{-1}],\qquad A=C[i,j\where i^2=j^2=-1,~ij=-ji],\qquad L=\ideal{113,{x}-16}^{\otimes 3}\ .\]
        That $A$ is Azumaya follows from the general fact (left to the reader) that
        $(a,b)_{2,C}:=C[i,j\where i^2=a,~j^2=b,~ij=-ji]$ is Azumaya for all $a,b\in \units{C}$, provided $\frac{1}{2}\in C$.
        The algebra $A$ is non-trivial in $\Br(C)$ since $K$ (and hence $C$) embeds in $\R$, and $A\otimes_C \R\cong (-1,-1)_{2,\R}$ is
        well-known to be a division ring.
        The class group of $D$ is a cyclic group of
        order $48$ generated by $[\ideal{113,{x}-16}]$ (verified using SAGE), so when viewed as an
        element of $\Pic(D)$, $[\ideal{113,{x}-16}^{\otimes 3}]$
        has order $16$ and its eighth power is not a $16$-th power in $\Pic(D)$. Adjoining $\frac{1}{2}$ to $D$
        does not affect these facts by the following easy lemma (note that $2$ is prime in $D$).
    \end{example}

    \begin{lem}
        Let $D$ be a Dedekind domain, let $0\neq p\in D$ be a prime \emph{element} of $D$, and set $D_p=D[\frac{1}{p}]$.
        Then the  map $[I]\mapsto [ID_p]: \Pic(D)\to \Pic(D_p)$ (known
        as the restriction map) is an isomorphism
        of groups.
    \end{lem}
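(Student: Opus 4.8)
The plan is to identify $\Pic(D)$ and $\Pic(D_p)$ with the respective groups of fractional ideals modulo principal ones and to track fractional ideals through the localization $D\to D_p=D[\frac{1}{p}]$. Recall that $D_p$ is a localization of $D$, hence again a Dedekind domain (or a field, in which case both Picard groups vanish and there is nothing to prove); its nonzero primes are the ideals $\mathfrak{q}D_p$ with $\mathfrak{q}$ a nonzero prime of $D$ not containing $p$, and the unique nonzero prime of $D$ that becomes the unit ideal in $D_p$ is $\mathfrak{p}:=pD$ --- which is genuinely prime precisely because $p$ is a prime \emph{element} of $D$. Throughout, $K$ denotes the common fraction field of $D$ and $D_p$. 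The assignment $[I]\mapsto[ID_p]$ is a well-defined group homomorphism: one has $(IJ)D_p=(ID_p)(JD_p)$ for fractional $D$-ideals $I,J$, and $I\cong J$ as $D$-modules iff $I=aJ$ for some $a\in K^\times$, forcing $ID_p=a(JD_p)$.

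For surjectivity I would argue that, given a fractional ideal $J$ of $D_p$, writing $J=\sum_{i=1}^n D_p x_i$ with $x_i\in K$ and setting $I=\sum_{i=1}^n D x_i$ gives a fractional $D$-ideal with $ID_p=J$; hence $[I]\mapsto[J]$.

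Injectivity is the step that genuinely uses the hypothesis that $p$ is prime, and is the only real content. Suppose $I$ is a fractional $D$-ideal with $ID_p$ principal, say $ID_p=\alpha D_p$ with $\alpha\in K^\times$; replacing $I$ by $\alpha^{-1}I$ we may assume $ID_p=D_p$. Localizing at each nonzero prime $\mathfrak{q}$ of $D$ with $p\notin\mathfrak{q}$ gives $I_{\mathfrak{q}}=D_{\mathfrak{q}}$, so the $\mathfrak{q}$-adic valuation of $I$ vanishes; hence in the prime factorization of $I$ only $\mathfrak{p}=pD$ can occur, i.e.\ $I=\mathfrak{p}^{\,k}=p^kD$ for some $k\in\Z$, which is principal in $D$. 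Thus $[I]=0$ in $\Pic(D)$, and the map is injective. (Equivalently, one can invoke the standard localization sequence $\bigoplus_{\mathfrak{q}\ni p}\Z\to\Pic(D)\to\Pic(D_p)\to 0$, $\mathfrak{q}\mapsto[\mathfrak{q}]$: here the left-hand sum is $\Z\cdot[\mathfrak{p}]$ and $[\mathfrak{p}]=[pD]=0$.) I do not anticipate any serious obstacle; the only care needed is the valuation bookkeeping, and the essential point is simply that the sole prime killed by the localization is itself principal.
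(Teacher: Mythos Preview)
Your proof is correct. Both your argument and the paper's reduce injectivity to the observation that the only prime of $D$ killed by the localization is $\mathfrak{p}=pD$, which is principal; but the execution differs. For surjectivity the paper contracts an ideal of $D_p$ to $D$ (taking $I\cap D$), while you expand a finite generating set; both are fine. For injectivity you invoke the unique prime factorization of fractional ideals in a Dedekind domain (equivalently the localization exact sequence $\Z\cdot[\mathfrak{p}]\to\Pic(D)\to\Pic(D_p)\to 0$), which immediately forces $I=\mathfrak{p}^k=p^kD$. The paper instead gives a hands-on divisibility argument: after arranging $J$ and a generator $z$ of $JD_p$ to be coprime to $p$, it shows directly that every element of $J$ lies in $zD$. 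Your route is the standard textbook one and is shorter; the paper's is more self-contained in that it does not appeal to the structure theorem for ideals in Dedekind domains, only to Krull dimension $1$ and elementary divisibility.
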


    \begin{proof}
        That $[I]\mapsto [ID_p]$ is a group homomorphism is straightforward.
        It is onto since any ideal $I\idealof D_p$ is easily seen to satisfy
        $(I\cap D)D_p=I$.

        To show injectivity, observe that  $D$ has Krull-dimension $1$, hence $pD$ is maximal.
        As a result,  every ideal $0
        \neq J\idealof D$ can be written as $J=p^{n}J'$ ($n\in\N$) with
        $J'$ is coprime to $p$.
        The same applies to elements $x\in D$.
        Also observe that if $J$ is coprime to $p$
        and $x\in D$, the $px\in J$ implies $x\in J$. Indeed, $px\in J\cap pD=pJ$ (since
        $J$ is coprime to $p$), hence $x\in J$.

        Suppose now that $J\idealof D$ is such that
        $I:=JD_p$ is principal. We need to show that $J$ is principal
        (i.e.\ that $[J]$ is trivial in $\Pic(D)$).
        Replacing $J$ with $p^{-n}J$ if needed, we may assume
        that $J$ is coprime to $p$. Let $z$ be a generator of $I$. Again, replacing $z$ with $p^nz$ ($n\in\Z$),
        we may assume that $z\in J$, and since $J$ is coprime to $p$, we may further divide $z$ by $p$ unitl it
        is coprime to $p$. Let $y\in J$.
        Then $y=z(p^nw)$ for some $n\in\Z$ and $w\in D$ coprime to $p$.
        This implies $p^{-n}y=zw$, so $n$ cannot be negative (since the right hand side is not divisible by $p$).
        Therefore, $wp^n\in D$ and $y=zwp^n\in zD$. This means that $J=zD$,
        as required.
    \end{proof}

    \begin{remark}\label{AZ:RM:non-optimal-bound}
        Let $C$ be a regular integral domain of Krull dimension at most $2$, and let $F$ be the fraction
        field of $C$. Assume further that
        every central simple $F$-algebra of exponent $2$ is Brauer equivalent to a quaternion algebra.
        For example, this is the case when $F$ is a global field by the Albert-Brauer-Hasse-Noether Theorem,
        or when $F$ has transcendence degree at most $2$ over a separably closed subfield by \cite{deJong04}.
        Then every Azumaya  $C$-algebra $A$ of exponent $2$ admits an algebra $B\in [A]$ with
        involution and $\rank_C(B)\leq \rank_C(A)$. In particular, in this case,
        the bound $\rank_C(B)\leq 4\rank_C(A)$ of Proposition~\ref{AZ:PR:rank-bound} is not tight.
        Indeed, by Saltman \cite[Th.~4.1]{Sa78}, it is enough to show that $B$ can be chosen to be a quaternion algebra.
        As $A':=A\otimes_C F$ has exponent $2$, there exists by assumption a \emph{quaternion} central simple $F$-algebra $B'\Breq A'$.
        Now, by the proof of \cite[Pr.\ 7.4]{AusGold60}, $B'$ contains an Azumaya $C$-algebra $B$ such that $B'=B\otimes_C F$.
        Finally, by  \cite[Th.\ 7.2]{AusGold60}, $B\Breq A$, so we are done.
    \end{remark}

\section{Questions}
\label{section:questions}

    We finish with several questions that we were unable to answer.

    \begin{que}
        Is there a ring $R$ such that $R\Moreq R^\op$ but $R$ is not Morita equivalent
        to a ring with an anti-automorphism (cf.\ section~\ref{section:overview})?
    \end{que}

    \begin{que}
        Is there a semilocal  ring $R$ such that  $R$ does not have an involution
        despite $\nMat{R}{2}$ having an involution (cf.\ Theorem~\ref{AZ:TH:involution-transfer})?
    \end{que}

    \begin{que}
        Is there a non-Azumaya algebra admitting a Goldman element (cf.\ Remark~\ref{AZ:RM:after-Saltman-thm}(i))?
    \end{que}

    \begin{que}
        Let $C/C_0$ be a Galois extension with Galois group $\{1,\sigma\}$.
        Is it true that all Azumaya $C$-algebras $A$ with $A\Breq A^\sigma$
        are Brauer equivalent to an Azumaya algebra with an anti-automorphism
        whose restriction to $C$ is $\sigma$  (cf.\ Remark~\ref{AZ:RM:semilocal-Az-algs})?
    \end{que}

    \begin{que}
        Is the bound $\rank(B)\leq 4\rank(A)$ of Proposition~\ref{AZ:PR:rank-bound}
        tight when the base ring is connected (cf.\ Remark~\ref{AZ:RM:non-optimal-bound})?
    \end{que}

    \begin{que}
        Are there Azumaya algebras of exponent $2$ and
        without zero-divisors that do not admit an involution of the first kind?
    \end{que}

    For our last question, recall that an involution $\alpha$ on a ring
    $R$ is called \emph{hyperbolic} if there exists an idempotent $e\in R$ such that $e+e^\alpha=1$.
    In all  proofs of Saltman's Theorem that we have encountered (e.g.\ \cite{Sa78}, \cite{KnParSri90}, and the proof
    given in section~\ref{section:first-kind}),
    the constructed involution was hyperbolic. In contrast, Albert's Theorem guarantees an involution which is non-hyperbolic
    (and in fact \emph{anisotropic}); indeed, we can take $B$ to be a division ring, and any involution on a division ring is non-hyperbolic.
    We therefore ask:

    \begin{que}
        Let $A$ be an Azumaya $C$-algebra with $A\otimes A\Breq C$. Does there always exist
        an Azumaya algebra $B\in[A]$ having a \emph{non-hyperbolic} involution of the first kind?
    \end{que}

\bibliographystyle{plain}
\bibliography{MyBib}

\end{document}